\numberwithin{figure}{section}
\numberwithin{table}{section}
\newtheorem{theorem}{Theorem}[section]
\newtheorem*{conj}{Conjecture}
\newtheorem{lemma}[theorem]{Lemma}
\newtheorem{prop}[theorem]{Proposition}
\theoremstyle{definition}
\newtheorem{definition}[theorem]{Definition}
\newtheorem{example}[theorem]{Example}
\newtheorem{cor}[theorem]{Corollary}
\newtheorem{question}[theorem]{Question}
\theoremstyle{remark}
\newtheorem{remark}[theorem]{Remark}
\numberwithin{equation}{section}
\newfont{\tap}{tap scaled 650}
\def \V{{\mathbf V}}
\def \H{{\mathbb H}}
\def \R{{\mathbb R}}
\def \Re{{\mathrm {Re}}}
\def \Im{{\mathrm {Im}}}
\def \Z{{\mathbb Z}}
\def \O{{\mathcal O}}
\def \C{{\mathrm C}}
\def \CC{{\mathbb C}}
\def \A{{\mathcal A}}
\def \[{[ }
\def \]{] }
\def \T{{\mathcal T}}
\def \P{\mathcal{P}}
\def \p{\mathrm{p}}
\def\G{\Ca}
\def\t{\widetilde}
\def\cc{\mathbf{c}}
\def\dd{\mathbf{d}}
\def \s{\theta }
\def \V{{\mathcal V}}
\def \Ca{{\mathcal C}}
\def \k{{k}}
\begin{document}

\title[Acyclic cluster algebras and reflection groups]{Acyclic cluster algebras, reflection groups, and curves on a punctured disc}
\author{Anna Felikson and Pavel Tumarkin}
\address{Department of Mathematical Sciences, Durham University, Science Laboratories, South Road, Durham, DH1 3LE, UK}
\email{anna.felikson@durham.ac.uk, pavel.tumarkin@durham.ac.uk}
\thanks{AF was partially supported by EPSRC grant EP/N005457/1}

\begin{abstract}
We establish a bijective correspondence between certain non-self-intersecting curves in an $n$-punctured disc and  positive $\cc$-vectors of acyclic cluster algebras whose quivers have multiple arrows between every pair of vertices. As a corollary, we obtain a proof of Lee -- Lee conjecture~\cite{LL} on the combinatorial description of real Schur roots for acyclic quivers with multiple arrows, and give a combinatorial characterization of seeds in terms of curves in an $n$-punctured disc.
\end{abstract}

\maketitle
\setcounter{tocdepth}{1}
\tableofcontents

\section{Introduction and main results}

Given an acyclic quiver $Q$, {\em real Schur roots} are dimension vectors of indecomposable rigid representations of $Q$ over the path algebra $\k Q$, where $\k$ is an algebraically closed field. According to Kac~\cite{Kac}, real Schur roots are indeed positive roots of the root system $\Delta$ of the Kac-Moody algebra constructed by the generalized Cartan matrix defining the Tits quadratic form of $Q$~\cite{B}. The paper is devoted to the following question: 
\begin{itemize}
\item
{\em How can we characterize real Schur roots among all positive real roots of $\Delta$? In particular, given a positive real root, can we say whether it is a Schur root or not?}
\end{itemize}


In~\cite{Sch}, Schoefild provided a criterion for a real root being a Schur root in terms of the dimension vectors of subrepresentations. Hubery and Krause~\cite{HK} gave a characterization of real Schur roots in terms of non-crossing partitions. In~\cite{IS}, Igusa and Schiffler characterized Schur roots as those whose corresponding reflections are prefixes of a Coxeter element. In~\cite{LL}, K.-H.~Lee and K.~Lee suggested a (conjectural) method to explicitly identify real Schur roots, and proved it for acyclic quivers with multiple arrows between every pair of vertices (we call them {\em $2$-complete}) of rank $3$. We will give below our reformulation of their conjecture for $2$-complete quivers (though the equivalence may not be immediate).

Given an $n$-punctured disc $D$ with a base point, its fundamental group is a free group $F_n$ with $n$ generators $s_1,\dots,s_n$. Thus, to every element of the free group $F_n$ we can assign a loop in $D$. Now consider a quotient $W$ of $F_n$ by setting all the generators to be involutions. The quotient is the universal Coxeter group with $n$ generators, and it can be understood as the Weyl group of the root system $\Delta$ defined above (here we use the fact $Q$ is $2$-complete).  In particular, reflections of $W$ are in one-to-one correspondence with positive real roots of $\Delta$. Considering the canonical projection $F_n\to W$, we can assign to every element of $W$ a (finite) class of loops in $D$. Now take all the reflections in $W$ such that the assigned class of loops contains a representative without self-intersections. 

\begin{conj}[\cite{LL}]
The reflections in $W$ such that the assigned class of loops contains a representative without self-intersections are precisely those corresponding to real Schur roots in $\Delta$.
\end{conj}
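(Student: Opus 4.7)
The plan is to use $\cc$-vectors of the cluster algebra $\A(Q)$ as an intermediary between combinatorics of curves and real Schur roots. Recall that for an acyclic quiver, the set of positive $\cc$-vectors of $\A(Q)$ coincides with the set of real Schur roots of $Q$ (a theorem of Chavez / Speyer--Thomas via the fact that positive $\cc$-vectors are exactly the dimension vectors of rigid indecomposables). Thus it is enough to prove the bijection advertised in the abstract,
\[
\{\text{positive }\cc\text{-vectors of }\A(Q)\}\ \longleftrightarrow\ \{\text{reflections in }W\text{ whose loop class has a non-self-intersecting representative}\},
\]
and the conjecture follows by composition.

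To set up the bijection I would first construct a \emph{geometric model} that assigns, to each seed $\Sigma$ of $\A(Q)$, an $n$-tuple of pairwise non-intersecting oriented curves in the $n$-punctured disc $D$. In the initial seed the $i$-th curve is the simple loop $\ell_i$ around the $i$-th puncture; this represents the generator $s_i\in F_n$ and, after projection $F_n\to W$, the reflection corresponding to the $i$-th simple root, which is the initial $\cc$-vector $e_i$. For a mutation $\mu_k$ of the seed I would define a local ``flip'' on the $k$-th curve that records a concatenation with neighbouring curves dictated by the exchange matrix, so that the word in $F_n$ read off from the new $k$-th curve represents precisely the new $\cc$-vector's reflection in $W$. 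The main compatibility to verify is that this flip reproduces the Nakanishi--Zelevinsky mutation rule for $\cc$-vectors on the algebraic side (using sign coherence), and that it preserves pairwise non-self-intersection on the geometric side.

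Granted this compatibility, one direction of the bijection is immediate: any positive $\cc$-vector $v$ appears as a column of some seed $\Sigma$, hence its associated reflection admits a non-self-intersecting loop representative, namely the corresponding curve in the tuple attached to $\Sigma$. The converse direction---showing that every reflection admitting a simple loop representative is realised in some seed---requires a combinatorial ``straightening'' argument: given a non-self-intersecting loop $\gamma$, I would peel off outermost crossings with the initial curves $\ell_1,\dots,\ell_n$ to produce a finite sequence of mutations after which $\gamma$ becomes one of the basic loops $\ell_i$. Intersection numbers of $\gamma$ with the tuple of curves will play the role of a decreasing complexity, analogous to the distance-decreasing arguments used for arcs in triangulated surfaces.

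The main obstacle will be proving termination and well-definedness of this straightening procedure, together with the precise dictionary between curve flips and $\cc$-vector mutations (step one of the plan). In particular, one must rule out that a simple loop becomes self-intersecting under a valid mutation, and show that no positive real root outside the Schur locus is accidentally represented by some simple loop at an unrelated seed. I expect this to follow from a careful analysis of how the word in $F_n$ associated to a curve transforms under the flip, combined with the acyclicity of $Q$ and the $2$-complete hypothesis, which ensures that the universal Coxeter group $W$ is the full Weyl group of $\Delta$ so that reflections and positive real roots are in bijection without collapse.
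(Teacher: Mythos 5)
Your overall framework --- passing through the identification of real Schur roots with positive $\cc$-vectors, and modelling seeds by $n$-tuples of non-self-intersecting curves whose mutation is a topological operation --- is exactly the paper's, and your forward direction goes through essentially as in the paper: the initial seed gives the simple loops around the punctures, a mutation $\mu_k$ conjugates the appropriate neighbouring loops by the $k$-th one, and (by Seven's theorem that partial reflections model $\cc$-vector mutation, together with an explicit case analysis of how $Y$-seeds sit on the Cayley graph of $W$) this operation preserves pairwise non-intersection. So every positive $\cc$-vector is represented by a non-self-intersecting loop.

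The genuine gap is the converse, which you reduce to a ``straightening'' procedure whose termination and well-definedness you explicitly leave open --- and that is precisely the hard part of the statement. Your proposed complexity (intersection number of the given loop $\gamma$ with the current tuple of curves) requires showing that at every non-initial stage some mutation strictly decreases it and that no sequence of choices cycles; nothing in your sketch supplies this, and since $\gamma$ is not a priori a member of any seed you cannot simply invoke the decreasing-mutation phenomenon for mutation-acyclic quivers. The paper avoids reduction altogether: it characterizes the $n$-tuples of arcs coming from $Y$-seeds intrinsically (clockwise ordered, pairwise non-intersecting, with at most one ``bad pair'', certified against the Speyer--Thomas criterion that the ordered product of the corresponding reflections is the Coxeter element $s_1\dots s_n$), and then proves by a completion argument --- using $\gamma$-twin arcs and Dehn twists to make auxiliary arcs long enough that bad pairs can be traded away --- that every arc embeds into such a tuple. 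Alternatively, as the paper remarks, the converse can be shortcut via Igusa--Schiffler: a real root is Schur if and only if its reflection is a prefix of a Coxeter element, and any non-self-intersecting arc can be made the first member of an $n$-tuple simply by cutting the disc along it. One of these two mechanisms (or a genuinely new termination proof) must replace your straightening step; as written, your argument does not establish that every simple loop arises from a Schur root.
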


We note that, given a positive real root $\alpha\in\Delta$, it is easy to check whether the assumptions of the conjecture hold for $\alpha$ by using the following reformulation (we refer to Section~\ref{r->arc} for the details), which makes the proposed characterization of real Schur roots very convenient to use:

\medskip

{\em Consider an $n$-punctured disc (where punctures are ordered) with a boundary marked point. Then  there is a one-to-one correspondence between Schur roots and non-self-intersecting arcs connecting the boundary marked point and one of the punctures.}

\medskip
The correspondence above is written explicitly in Section~\ref{r->arc}.  

\medskip


Our interest in the problem comes from cluster algebras (though the problem and the main result of the paper, Theorem~\ref{thm-LL}, is formulated without any relation to cluster algebras). In~\cite{BGZ}, Barot, Geiss and Zelevinsky defined a mutation of a ``symmetrization'' of a skew-symmetrizable exchange matrix (called a {\em quasi-Cartan companion}). Based on this, Seven~\cite{S2} defined {\em admissible} quasi-Cartan companions which have particularly nice properties, and proved that for any acyclic quiver $Q$ any sequence of mutations applied to the initial admissible quasi-Cartan companion results again in an admissible one. Using this, he showed that mutations of $Y$\!-\,seeds of the cluster algebra $\A(Q)$ with principal coefficients can be modeled by partial reflections of collections of roots of the root system $\Delta$ (we remind the construction in Section~\ref{s-refl}). We are interested in a natural question: which roots of $\Delta$ belong to some $Y$\!-\,seed? This question is actually equivalent to the question above about real Schur roots.

In the present paper, we prove the Lee -- Lee conjecture for $2$-complete quivers, and investigate the related combinatorics. We proceed according to the following plan. First, we investigate how do $Y$-seeds look on the Cayley graph of $W$ and how the mutations act on them. We collect the results in Figures~\ref{ac-fig}-\ref{ind2}. Next, we associate to every element of a $Y$-seed (i.e., to every {\em $\cc$-vector}) a curve on a certain hyperbolic orbifold homeomorphic to a sphere with one cusp and $n$ orbifold points of order $2$. We then pass from these to certain non-self-intersecting curves (we call them {\em arcs}) on an $n$-punctured disc $D$ which are in one-to-one correspondence with non-self-intersecting loops in $D$ corresponding to reflections of $W$, and prove (Theorem~\ref{disc}) that the set of real Schur roots can be embedded into the set of arcs, confirming the Lee -- Lee conjecture in one direction.

Next, we provide a combinatorial characterization of $Y$-seeds in terms of arcs in $D$. We define a notion of a {\em bad pair} of arcs in $D$ (which is immediate to verify, see Definition~\ref{def-bad} and Remark~\ref{bad}) and prove the following theorem.
  
\setcounter{section}{6}
\setcounter{theorem}{4}

\begin{theorem}
A clockwise ordered $n$-tuple of non-intersecting arcs  in $D$ corresponds to a $Y$\!-\,seed if and only it contains at most one bad pair.

\end{theorem}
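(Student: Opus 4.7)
The plan is to prove both implications by a careful inductive/reductive argument that keeps track of how the combinatorial data on the punctured disc $D$ transforms under the mutation operations already analysed (the case-by-case pictures in Figures~\ref{ac-fig}--\ref{ind2}).

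For the necessity direction I would argue by induction on the mutation distance from the initial $Y$-seed. The base case requires checking that the clockwise $n$-tuple of arcs assigned to the initial acyclic seed is non-intersecting and contains at most one bad pair; this should be read directly from the construction in Section~\ref{r->arc} together with Seven's admissibility statement, and probably yields $0$ bad pairs for the initial seed (or at most one, depending on the conventions chosen in Definition~\ref{def-bad}). For the inductive step, I would pick a single mutation $\mu_k$ and verify that whenever the pre-mutation $n$-tuple is clockwise-ordered, non-intersecting, and contains at most one bad pair, the post-mutation $n$-tuple does as well. This reduces to a finite case analysis using the local pictures of how a $c$-vector mutates combined with the correspondence $\cc\text{-vector}\leftrightarrow\text{arc}$; each case needs to be checked separately in terms of how the arc associated to the mutated coordinate rearranges itself relative to its neighbours.

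For the sufficiency direction I would proceed by a reduction to a known seed. Given a clockwise ordered $n$-tuple $\mathcal{A}=(\alpha_1,\dots,\alpha_n)$ of non-intersecting arcs containing at most one bad pair, define a combinatorial complexity (for instance, the total geometric intersection number of the arcs with a fixed system of reference arcs, or the sum of the word lengths of the associated reflections in $W$). I would then show, again by case analysis, that unless $\mathcal{A}$ is already one of the standard ``acyclic'' configurations known to arise as a $Y$-seed, one can always locate an index $k$ such that the combinatorially defined mutation $\mu_k$ strictly decreases the complexity while preserving the ``non-intersecting, clockwise, at most one bad pair'' properties. Applying this repeatedly produces a finite sequence of mutations from $\mathcal{A}$ to a standard seed, and reversing this sequence exhibits $\mathcal{A}$ as a $Y$-seed.

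The main obstacle will be the sufficiency direction, specifically the existence of a strictly complexity-decreasing mutation when exactly one bad pair is present. With no bad pair, the analysis should mimic the acyclic mutation class and is essentially the standard situation; but with one bad pair the geometry forces the mutable index $k$ to be chosen carefully so as not to create a second bad pair or break the non-intersection property. Handling this amounts to verifying that among the local configurations around the bad pair there is always at least one ``outer'' arc which, when mutated, slides over a puncture and simultaneously reduces complexity and either keeps the bad pair, eliminates it, or translates it to an equivalent one. I would also need a termination argument: the non-negative complexity function together with strict decrease under the chosen mutations guarantees finiteness, and the terminal configurations have to be identified with the standard seeds of the preceding sections.
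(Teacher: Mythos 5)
Your necessity direction is essentially the paper's argument: the paper also deduces it from the inductively established classification of $Y$\!-\,seed shapes on the Cayley graph (Figures~\ref{ac-fig}--\ref{ind2}), observing that two nodes of the same colour never separate one another from $e$, so a bad pair can occur only at the single place where the sign changes. That part of your plan is sound, though you should note the base case gives exactly zero bad pairs (the initial arcs are the straight segments to the $p_i$, whose reflections are the distinct generators, and no reduced word is a prefix of another).

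The sufficiency direction is where you diverge from the paper, and it contains a genuine gap. You propose to mutate an arbitrary admissible $n$-tuple down to a standard acyclic configuration, but the mutation $\mu_k$ is defined on $Y$\!-\,seeds (it is a partial reflection whose action depends on the signs of the roots and on the signs of the entries of $B^t$), i.e.\ on exactly the objects you are trying to produce. To run your reduction you would first have to define a purely combinatorial mutation on $n$-tuples of arcs, prove it preserves ``non-intersecting, clockwise ordered, at most one bad pair,'' prove a strictly decreasing choice of $k$ always exists, and prove that the terminal configurations are precisely the acyclic seeds. You correctly flag the middle step as ``the main obstacle,'' but that step \emph{is} the theorem: establishing that an arbitrary such tuple has the rigid two-block colour structure needed to locate the decreasing direction amounts to re-deriving the classification of seeds for tuples not yet known to be seeds. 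The paper sidesteps all of this by invoking Speyer--Thomas (Theorem~\ref{ST}): one assigns signs to the roots according to the position of the unique bad pair, checks condition (1) via the equivalence ``bad pair $\Leftrightarrow$ positive inner product'' (Proposition~\ref{*}), and checks condition (2) by acting with the braid group $\mathbf B_n$ on the disc, which carries the given tuple to the standard one while leaving the ordered product of the associated reflections equal to the Coxeter element $s_1\dots s_n$. If you want to keep your reduction strategy, you must either supply the missing combinatorial mutation and the decreasing-index lemma in full, or replace that step by an appeal to a seed-recognition criterion such as Theorem~\ref{ST}.
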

One of the main tools in the proof is a theorem of Speyer and Thomas~\cite[Theorem 1.4]{ST} (also reproduced as Theorem~\ref{ST} below).

Finally, we show (Corollary~\ref{arc->inside_seed}) that every arc in $D$ can be included in an $n$-tuple of arcs with at most one bad pair. This implies that every arc corresponds to a real Schur root, so we get the following result.

\setcounter{theorem}{19}

\begin{theorem}
There is a natural one-to-one correspondence between real Schur roots and arcs in $D$. In particular, Lee -- Lee conjecture holds for $2$-complete quivers.

\end{theorem}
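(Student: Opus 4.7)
The plan is to assemble three ingredients that were already flagged in the introduction: Theorem~\ref{disc} (the Lee--Lee direction), the numbered Theorem 6.5 above (combinatorial characterization of $Y$-seeds via bad pairs), and Corollary~\ref{arc->inside_seed} (every arc embeds in such an $n$-tuple). The bridge between arcs and roots is the Speyer--Thomas theorem~\ref{ST}, which identifies positive $\cc$-vectors of $\A(Q)$ with real Schur roots of $\Delta$; this is what makes the geometric statement about arcs equivalent to the representation-theoretic one.

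The first direction is already given: Theorem~\ref{disc} embeds the set of real Schur roots into the set of arcs in $D$ via the reflection-to-loop dictionary of Section~\ref{r->arc}. For surjectivity, I would start from an arbitrary arc $\gamma$ in $D$. Corollary~\ref{arc->inside_seed} lets me complete $\gamma$ to a clockwise ordered $n$-tuple of non-intersecting arcs with at most one bad pair; Theorem 6.5 then says this $n$-tuple corresponds to a $Y$-seed of $\A(Q)$. Hence $\gamma$ is the arc associated to one of the $\cc$-vectors of that $Y$-seed. By Theorem~\ref{ST}, that $\cc$-vector is a positive real Schur root of $\Delta$, and by construction of the arc-to-$\cc$-vector assignment in Sections~\ref{s-refl}--\ref{r->arc} it corresponds to $\gamma$.

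What remains is to check that the two maps (Schur root $\to$ arc via Theorem~\ref{disc}, and arc $\to$ Schur root via the paragraph above) are mutual inverses. Both are defined through the same underlying dictionary: a reflection $t \in W$ admits a non-self-intersecting loop representative iff its $\cc$-vector orbit meets the positive cone, and in both constructions the arc attached to $t$ is obtained by lifting the loop to the hyperbolic orbifold and projecting to $D$. So the two assignments agree on their common domains, and composing either way gives the identity. The Lee--Lee conjecture for $2$-complete quivers is then the immediate restatement that the reflections with non-self-intersecting loop representatives are exactly those corresponding to real Schur roots.

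I expect no serious obstacle at this step: all the geometric and cluster-theoretic work has been done in Theorem~\ref{disc}, Theorem 6.5, and Corollary~\ref{arc->inside_seed}, and the only genuinely delicate point---the identification of positive $\cc$-vectors with real Schur roots---is imported from~\cite{ST}. The only subtlety worth writing out carefully is the verification that the two arc-to-root assignments coincide, which amounts to tracing the definitions in Section~\ref{r->arc} through a single $Y$-seed; this is bookkeeping rather than a new argument.
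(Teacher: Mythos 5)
Your proposal is correct and follows the paper's own proof exactly: the paper obtains this theorem by the same combination of Theorem~\ref{disc} (injectivity of root $\mapsto$ arc), Corollary~\ref{arc->inside_seed} (completing an arbitrary arc to an $n$-tuple with at most one bad pair), and Theorem~\ref{seed<->1jump} (such an $n$-tuple comes from a $Y$\!-\,seed). One small correction: the identification of positive $\cc$-vectors with real Schur roots is not Theorem~\ref{ST} --- which only characterizes which collections of roots form a $C$-matrix --- but the results of Nagao~\cite{N} and N\'ajera Ch\'avez~\cite{NCh} recalled in Section~\ref{s-refl}; with that citation fixed, nothing else in your argument changes.
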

\setcounter{section}{1}

We would also like to emphasize the relations between our approach and the one used in~\cite{LL}. K.-H. Lee and K. Lee use the bijective correspondence between real Schur roots and $\dd$-vectors of non-initial cluster variables of the corresponding acyclic cluster algebra established by Caldero -- Keller~\cite{CK} and Caldero -- Zelevinsky~\cite{CZ}. We use a different characterization of real Schur roots as positive $\cc$-vectors of the corresponding cluster algebra -- this was established by Nagao~\cite{N} and N{\'a}jera Ch{\'a}vez~\cite{NCh}. This allows us to use the results of Speyer -- Thomas~\cite{ST} and Seven~\cite{S2} and their geometric interpretation in terms of partial reflections. 

One of the main tools in our considerations is the coincidence of two groups (see also Section~\ref{open}): both the Weyl group of the root system constructed by a $2$-complete acyclic quiver with $n$ vertices and the fundamental group of a hyperbolic orbifold of genus zero with one cusp and $n$ orbifold points of order $2$ are universal Coxeter groups of the same rank, and thus are isomorphic. This gives a particular embedding of the Cayley graph of the Weyl group into the hyperbolic plane and allows us to represent positive $\cc$-vectors by loops on the orbifold above (cf.~\cite{Bes}), and thus we can consider $Y$-seeds as $n$-tuples of loops. We note here that the Riemann surface used in~\cite{LL} is a double cover of our orbifold, so all our considerations and the main results can be formulated in terms of this double cover as well. However, the orbifold is more convenient for us as a step to introducing the model on the $n$-punctured disc, and we prefer to present our final criterion in terms of (collections of) curves on the punctured disc: this seems to us to be more explicit and easier to verify.       

\medskip

The paper is organized as follows. In Section~\ref{s-refl} we first remind essential details about quiver mutations, and then recall the geometric construction modelling the mutations of a mutation-acyclic quiver via a reflection group. In Section~\ref{cor} we list some immediate corollaries of the construction above, in particular ones concerning $2$-complete quivers; most of these are known, but there are also some we have not met in the literature. We then restrict ourselves to $2$-complete acyclic quivers. In Section~\ref{seeds} we consider $Y$-seeds drawn on the Cayley graph of the universal Coxeter group, and describe all the possible shapes of these (together with the combinatorics of mutations). The main results are collected in Figures~\ref{ac-fig}--\ref{ind2}. Section~\ref{sec-disc} is devoted to assigning of an $n$-tuple of non-intersecting arcs in an $n$-punctured disc $D$ to every $Y$-seed. In particular, this implies that we can assign an arc to every Schur root. Finally, in Section~\ref{converse} we characterize the $Y$-seeds in terms of collections of arcs in $D$, and complete the proof of (our reformulation of) Lee -- Lee conjecture. We also explain why our statement is equivalent to the initial conjecture in~\cite{LL}.

\subsection*{Acknowledgements}
We would like to thank Kyungyong Lee for very helpful discussions and comments to an earlier version of the paper, and Ralf Schiffler for suggesting the name {\em $2$-complete} for a quiver with all multiple arrows. We are grateful to the referee for valuable comments.

\section{Mutation-acyclic quivers via reflections}
\label{s-refl}

In this section we recall the construction from~\cite{rank3} showing that mutations of  a mutation-acyclic quiver can be modeled via a reflection group acting on some quadratic space. The results of this section can also be deduced from~\cite{S2,ST}, we just give a geometric interpretation. 

\subsection{Quiver mutations}
\label{s-def}
First, we remind the basics on quivers and their mutations.

A {\em quiver} $Q$ is a finite oriented graph with weighted edges containing no loops and no $2$-cycles, where weights are positive integers. We call the directed edges {\em arrows}, while drawing a quiver we omit weights equal to one. By {\em rank} of $Q$ we mean the number of its vertices.

For every vertex $k$ of a quiver $Q$ one can define an involutive operation  $\mu_k$ called {\it mutation of $Q$ in direction $k$}. This operation produces a new quiver  denoted by $\mu_k(Q)$ which can be obtained from $Q$ in the following way (see~\cite{FZ1}): 
\begin{itemize}
\item
orientations of all arrows incident to the vertex $k$ are reversed; 
\item
for every pair of vertices $(i,j)$ such that $Q$ contains arrows directed from $i$ to $k$ and from $k$ to $j$ the weight of the arrow joining $i$ and $j$ changes as described in Figure~\ref{quivermut}.
\end{itemize} 

\begin{figure}[!h]
\begin{center}
\psfrag{a}{\small $p$}
\psfrag{b}{\small $q$}
\psfrag{c}{\small $r$}
\psfrag{d}{\small $r'$}
\psfrag{k}{\small $k$}
\psfrag{mu}{\small $\mu_k$}
\epsfig{file=./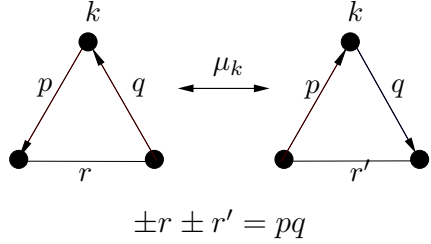,width=0.35\linewidth}\\
\medskip
$\pm{r}\pm{r'}={pq}$
\caption{Mutations of quivers. The sign before ${r}$ (resp., ${r'}$) is positive if the three vertices form an oriented cycle, and negative otherwise. Either $r$ or $r'$ may vanish. If $pq$ is equal to zero then neither the value of $r$ nor orientation of the corresponding arrow changes.}
\label{quivermut}

\end{center}
\end{figure}

Given a quiver $Q$, its {\it mutation class} is a set of all quivers obtained from $Q$ by all sequences of iterated mutations. Quivers from one mutation class are called {\it mutation-equivalent}.  

Quivers without loops and $2$-cycles are in one-to-one correspondence with integer skew-symmetric matrices $B=\{b_{ij}\}$, where $b_{ij}>0$ if and only if there is an arrow from $i$-th vertex to $j$-th one with weight $b_{ij}$. In terms of the matrix $B$ the mutation $\mu_k$ can be written as $\mu_k(B)=B'$, where
$$b'_{ij}=\left\{
           \begin{array}{ll}
             -b_{ij}, & \hbox{ if } i=k \hbox{ or } j=k; \\
             b_{ij}+\frac{|b_{ik}|b_{kj}+b_{ik}|b_{kj}|}{2}, & \hbox{ otherwise.}\\
           \end{array}
         \right.
$$
This transformation is called a {\it matrix mutation}. The skew-symmetric matrix $B$ is called an {\it exchange matrix} corresponding to $Q$. 

A quiver (and the corresponding exchange matrix) is called {\em acyclic} if it contains no oriented cycles. A quiver (and the matrix) is {\em mutation-acyclic} if its mutation class contains an acyclic representative. A quiver is $2$-{\it complete} if $|b_{ij}|\ge 2$  for every pair of distinct $i$ and $j$.


\subsection{Construction}
\subsubsection{The initial configuration.}
Let $Q$ be an acyclic quiver of rank $n$, and let $B$ be the corresponding skew-symmetric $n\times n$ matrix. We will assume that the vertices of $Q$ are indexed in a way such that $b_{ij}\ge 0$ for $i<j$. Consider a symmetric matrix with non-positive off-diagonal entries $M(B)=(m_{ij})$, where 
$$ m_{ii}=2, \qquad \quad  m_{ij}=-|b_{ij}| \ \text{ if } i\ne j. $$
This matrix (called a {\em Cartan companion of $B$, see~\cite{BGZ}}) defines a quadratic form (this is twice the {\it Tits quadratic form} of the path algebra of $Q$, see~\cite{B}), and we can consider $M(B)$ as a Gram matrix (i.e., the matrix of inner products) of some $n$-tuple of basis vectors $(v_1,\dots,v_n)$  in a quadratic $n$-space $V$ of the same signature as $M(B)$ has. 

\subsubsection{Reflection group and the root system.}
Given a vector $v\in V$  with $\langle v,v\rangle=2$ one can consider a {\it reflection } 
$$r_v(u)=u-\langle u,v\rangle v$$ 
with respect to $\Pi_v=v^\perp$, where $\langle\cdot,\cdot\rangle$ is the inner product defined by the quadratic form.
It is straightforward to see that $r_v$ preserves the quadratic form in $V$ 
and that $r_v(v)=-v$, i.e. that $r_v$ is a (pseudo-)orthogonal transformation preserving $\Pi_v$ and interchanging the halfspaces into which $V$ is decomposed by $\Pi_v$.

We denote by $G$ the group generated by reflections $s_1=r_{v_1},\dots,s_n=r_{v_n}$ in hyperplanes $\Pi_i=\Pi_{v_i}$. According to~\cite{V}, $G$ acts discretely in some cone $\C\subset V$ with fundamental chamber $F=\prod\limits_{i=1}^n\Pi_i^-$, where $\Pi_i^-=\{u\in V\ | \ \langle u,v_i\rangle<0  \}$. The fundamental chamber can also be understood as a connected component of the complement of the mirrors of all reflections inside $\C$. The images of vectors $v_i$ under $G$ are precisely real roots of the root system $\Delta$ constructed by the generalized Cartan matrix $M(B)$, the vectors $v_i$ are simple roots, $G$ is the corresponding Weyl group.  

Note that if $Q$ is $2$-complete, then the hyperplanes $\Pi_i$ (and their images under the action of $G$) do not intersect each other inside the cone $\C$, and the group $G$ is a free product of $n$ copies of $\Z/2\Z$.


\subsubsection{Roots and $\cc$-vectors.}

Let $Q$ be a quiver, and let $B$ be the corresponding $n\times n$ exchange matrix. Define an associated $2n\times n$ matrix $\t B$ in the following way: its top half is $B$, and the bottom half is the identity matrix. Mutating $\t B$ according to the rule above, we obtain a new matrix $\t B^t$ consisting on a top part $B^t$ (also called {\it principal part}) and the bottom part $C^t$, where $t$ is a sequence of mutations. The matrix $C^t$ is called a {\it $\cc$-matrix}, its columns are {\it $\cc$-vectors} $\cc_1^t,\dots,\cc_n^t$. Abusing notation, we will call a collection $S^t=(B^t,(\cc_1^t,\dots,\cc_n^t))$ (or $(Q^t,(\cc_1^t,\dots,\cc_n^t))$) a {\it $Y$\!-\,seed}.  

It was shown in~\cite{DWZ,ST} that if $Q$ is acyclic then for any sequence of mutations all $\cc$-vectors are roots of the root system $\Delta$ (where the coordinates are the coefficients in the basis consisting of the simple roots). Moreover, the following criterion for a collection of roots to form a $C$-matrix is proved in~\cite{ST}.

\begin{theorem}[\cite{ST}, Theorem 1.4]
\label{ST}
A collection of roots $u_1,\dots,u_n$ is the set of $\cc$-vectors for a $Y$\!-\,seed if and only if the following two assumptions hold:
\begin{enumerate}
\item If $u_i$ and $u_j$ are both positive roots or both negative roots then $\langle u_i,u_j\rangle\le 0$.
\item
It is possible to order the roots so that the positive roots precede the negative roots, and the product of the reflections corresponding to these roots, taken in this order, equals $s_1\dots s_n$. 
\end{enumerate}
\end{theorem}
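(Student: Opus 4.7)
The plan is to prove the two implications separately: the forward direction ($\Rightarrow$) by induction on the length of the mutation sequence producing the $Y$-seed, and the backward direction ($\Leftarrow$) by a reconstruction argument that peels off negative roots one at a time.

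For the forward direction, at the initial $Y$-seed the $\cc$-vectors are $v_1,\dots,v_n$; condition (1) holds because $\l v_i,v_j\r = m_{ij}\le 0$ for $i\ne j$ by the definition of the Cartan companion, and condition (2) reduces to the tautology $s_1\cdots s_n = s_1\cdots s_n$. In the inductive step, I would use sign-coherence (so each $\cc$-vector is $\pm$ a positive root) and the mutation rule $\cc_k\mapsto -\cc_k$, $\cc_i\mapsto \cc_i+[\varepsilon b_{ki}]_+\cc_k$ for $i\ne k$, where $\varepsilon$ is the common sign of the entries of $\cc_k$. Geometrically this is a \emph{partial reflection}: the $\cc_j$'s lying on the same side of $\Pi_{\cc_k}$ as $\cc_k$ get reflected by $r_{\cc_k}$, while those on the opposite side are fixed. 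Preservation of (1) on each same-sign half then follows because $r_{\cc_k}$ is an isometry, and preservation across the two halves is a short computation with the mutation rule for $B$. For (2), mutation at $k$ corresponds to a local braid-like move on the ordered product of reflections that keeps the total equal to $s_1\cdots s_n$.

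For the backward direction, I would induct on the number of negative roots among $u_1,\dots,u_n$. In the base case all $u_i$ are positive; condition (2) then forces, via standard results on reflection factorizations of a Coxeter element, that $(u_1,\dots,u_n)$ is obtained from the simple roots by a sequence of mutations corresponding to a reorientation of $Q$ inside its acyclic mutation class. For the inductive step, I would locate an "extremal" negative root $u_k$ in the factorization and mutate at $k$: this flips $u_k$ to $-u_k$ (a positive root) and modifies the other $u_i$'s by a partial reflection, producing a collection with strictly fewer negatives. One must then verify that both (1) and (2) still hold after the mutation, and close the induction when no negatives remain; the sequence of mutations used to eliminate the negatives, read in reverse, realizes $(u_1,\dots,u_n)$ as a $\cc$-matrix.

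The main obstacle is locating the extremal index $k$ and checking that conditions (1) and (2) persist after the mutation. I expect the cleanest path uses the interpretation of (2) as a shortest-path (reduced) factorization of the Coxeter element $s_1\cdots s_n$ in the absolute length on $W$: an outermost negative reflection in such a factorization is guaranteed to be mutable, and the induced change in the factorization can be tracked explicitly in the non-crossing partitions lattice. This viewpoint interlocks naturally with the prefix-of-Coxeter-element picture of~\cite{IS} and is the technical heart of the argument.
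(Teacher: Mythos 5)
First, a point of reference: the paper does not actually prove Theorem~\ref{ST} --- it is imported verbatim from Speyer--Thomas~\cite{ST}, and the only thing the paper adds is the remark at the end of Section~\ref{seeds} that the Cayley-graph induction of Figures~\ref{ac-fig}--\ref{ind2} yields an independent proof of the ``only if'' direction. Your forward direction is essentially that same induction (the initial seed satisfies (1) and (2); a mutation acts as a partial reflection together with a local rewriting of the ordered factorization that preserves the product $s_1\dots s_n$), and modulo writing out the ``short computation'' it is fine and consistent with what the paper sketches.

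The backward direction is where the real content of the theorem lies, and your sketch has genuine gaps there. The central one: your inductive step requires you to ``mutate at $k$'', but the mutation rule for $\cc$-vectors is a partial reflection whose support is determined by the \emph{signs} of the entries $b^t_{jk}$ of an exchange matrix --- and for an abstract collection of roots satisfying (1) and (2) you do not yet have an exchange matrix. The inner products $\langle u_j,u_k\rangle$ only recover $|b^t_{jk}|$ (cf.\ Theorem~\ref{mut-trans}(1)), not the signs, so ``the $u_j$ on the same side of $\Pi_{u_k}$ as $u_k$ get reflected'' is not a well-defined recipe until the seed has already been reconstructed; as written the step is circular. Secondary gaps: (i) you assert but do not prove that the chosen partial reflection strictly decreases the number of negative roots (why can a positive $u_i$ not be sent to a negative root?); (ii) you assert but do not prove that conditions (1) and (2) persist --- preservation of (1) ``across the two halves'' is exactly where hypothesis (2) must be used, and the ``short computation with the mutation rule for $B$'' is unavailable without a $B$; (iii) the claim that an ``outermost negative reflection is guaranteed to be mutable'' is unsupported; and (iv) the base case needs an argument: Hurwitz transitivity on reduced reflection factorizations of $s_1\dots s_n$ produces all complete exceptional sequences, most of which violate (1), so condition (1) must enter the base case explicitly rather than via ``standard results on factorizations''. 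These are precisely the points where Speyer and Thomas's actual proof leans on the representation theory of the path algebra (exceptional sequences and reflection functors) rather than pure root combinatorics, and where the present paper, reasonably, simply cites them.
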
 

Furthermore, it was shown in~\cite{N} that all positive $\cc$-vectors are real Schur roots (i.e., dimension vectors of indecomposable rigid modules over a path algebra of $Q$). The converse inclusion was proved in~\cite{NCh}: $\cc$-vectors are precisely real Schur roots and their opposites.

\subsubsection{Mutation.}
The initial acyclic quiver $Q$ (and the initial matrix $B$) corresponds to the initial set of generating reflections $s_1,\dots,s_n$ in the group $G$ and to the initial domain $F\subset V$.
Applying mutations, we will obtain other sets of generating reflections in $G$ as well as other domains in $V$.

More precisely, define mutation of the set of initial generating reflections as a partial conjugation:
$$
\mu_k(s_j)=\begin{cases}  
s_ks_js_k & \text{if  $b_{jk}<0$,}\\ 
s_j&  \text{otherwise.}\end{cases}
$$
Consequently, the mutation of an $n$-tuple of simple roots (and of $n$-tuple of hyperplanes) is defined by a partial reflection:
$$
\mu_k(v_j)=\begin{cases}  
v_j-(v_j,v_k)v_k & \text{if $b_{jk}<0$,}\\ 
-v_k & \text{if $j=k$,}\\
v_j&  \text{otherwise.}\end{cases}
$$
In the general case, for a collection of roots $u_1,\dots,u_n$ and a skew-symmetric matrix $B^t=(b_{ij}^t)$ of signed inner products, the mutation $\mu_k$ is also defined by a partial reflection depending on the sign of the root $u_k$: 
$$
\mu_k(u_j)=\begin{cases}  
\text{if $u_k$ is positive, then }\; \begin{cases}u_j-(u_j,u_k)u_k & \text{if $b^t_{jk}<0$,}\\ 
-u_k & \text{if $j=k$,}\\
u_j&  \text{otherwise.}\end{cases}\\
\\
\text{if $u_k$ is negative, then } \begin{cases}u_j-(u_j,u_k)u_k & \text{if $b^t_{jk}>0$,}\\ 
-u_k & \text{if $j=k$,}\\
u_j&  \text{otherwise.}\end{cases}
\end{cases}
$$
Geometrically, mutation $\mu_k$ reflects (with respect to $u_k$) all the roots $u_j$ such that there is an arrow $k\to j$ (if $u_k$ is positive) or $j\to k$ (if $u_k$ is negative), takes $u_k$ to its negative, and leaves all the other roots intact.

One can note that if the reflections in roots $\{u_i\}$ generate the Weyl group $G$, then the reflections in $\{\mu_k(u_i)\}$ also generate $G$. This implies that after every mutation we will obtain a collection of generating reflections of $G$.

The following result by Seven shows that the  mutations of the quiver agree with mutations of the roots.

\begin{theorem}[\cite{S2}, Corollary 1.7]
\label{mut-trans}
Let $Q$ be an acyclic quiver of rank $n$, and let $B$ be the corresponding exchange matrix. Let $\V= \{v_1,\dots,v_n \}$, $v_i\in V$ be an $n$-tuple of vectors such that $\langle v_i,v_i\rangle=2$ and $\langle v_i,v_j\rangle=-|b_{ij}|$ for $i\ne j$. 
For a sequence of mutations $t=\mu_{i_s}\circ\dots\circ\mu_{i_1}$ 
denote $Q^t=\mu_{i_s}\circ\dots\circ\mu_{i_1}(Q)$, $B^t=\mu_{i_s}\circ\dots\circ\mu_{i_1}(B)$  and $\V^t=\mu_{i_s}\circ\dots\circ \mu_{i_1}(\V)$.
Then 
\begin{enumerate}
\item
$|\langle v_i^t,v_j^t\rangle|=|b_{ij}^t|$.
\item
$(B^t,(v_1^t,\dots,v_n^t))$ is a $Y$\!-\,seed.
\end{enumerate}
\end{theorem}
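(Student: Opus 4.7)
The plan is to prove both statements simultaneously by induction on the length $|t|$ of the mutation sequence. The base case is immediate: (1) is the defining Gram-matrix condition on $\V$, and (2) follows because the initial $C$-matrix is the identity, so its columns $e_1,\dots,e_n$ coincide with the coordinate representations of the simple roots $v_1,\dots,v_n$ in the basis of simple roots.

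For the inductive step, I would assume both statements hold for $t$ and set $t' = \mu_k\circ t$. To verify (1) for $t'$, I would perform a case analysis on which of $v_i^t, v_j^t$ is reflected by $\mu_k$. When $i=k$ (symmetrically $j=k$) both sides negate and the absolute values are preserved. When $i,j\ne k$, the decisive observation is that if both $v_i^t, v_j^t$ are reflected, a direct expansion using $\langle v_k^t,v_k^t\rangle=2$ gives $\langle v_i^{t'},v_j^{t'}\rangle = \langle v_i^t,v_j^t\rangle$, which matches the quiver mutation rule: in this case (both arrows at $k$ are on the same side) the rule leaves $b_{ij}$ unchanged, since the oriented path $i\to k\to j$ required for a correction is absent. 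If exactly one of $v_i^t, v_j^t$ is reflected, then $\langle v_i^{t'}, v_j^{t'}\rangle = \langle v_i^t, v_j^t\rangle - \langle v_i^t, v_k^t\rangle\langle v_j^t, v_k^t\rangle$, and the inductive hypothesis applied to the three pairs $(i,j), (i,k), (j,k)$ matches this with $b_{ij}^{t'}$ in absolute value.

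For (2), I would combine (1) with the results of~\cite{DWZ,ST} asserting that all $\cc$-vectors are roots, together with the standard Nakanishi--Zelevinsky formula for $\cc$-vector mutation. Sign coherence guarantees that the tropical sign $\epsilon_k$ of the $\cc$-vector $\cc_k^t$ coincides with the sign of $v_k^t$ as a real root. A direct verification shows that the Nakanishi--Zelevinsky rule $\cc_j^{t'} = \cc_j^t + [\epsilon_k b_{kj}^t]_+ \cc_k^t$ (for $j\ne k$) modifies $\cc_j^t$ precisely under the condition $\sgn(b_{jk}^t)\,\epsilon_k < 0$, which is exactly the condition in the definition of $\mu_k(v_j)$. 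Combining with (1), the modification term $b_{kj}^t \cc_k^t$ equals $-\langle v_j^t, v_k^t\rangle v_k^t$, so the two rules agree. Hence $\V^{t'}$ is the $\cc$-vector tuple of a $Y$-seed, proving (2).

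The main obstacle is the careful sign bookkeeping underlying both parts. Although (1) records only absolute values, the inductive step implicitly requires knowing the actual sign of each $\langle v_i^t, v_j^t\rangle$ relative to the tropical signs of $v_i^t, v_j^t$; matching the inner-product calculation in the one-sided-reflection case with the signed correction in the quiver mutation formula of Figure~\ref{quivermut} requires propagating this sign information through every mutation. This is precisely the content of an admissible quasi-Cartan companion in the sense of~\cite{S2}, and the technical core of the argument lies in showing that admissibility is preserved under partial reflection --- which is in turn what ensures the agreement between the partial-reflection rule for roots and the standard $\cc$-vector mutation.
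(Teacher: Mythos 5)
The paper gives no proof of this theorem at all: it is imported verbatim as~\cite[Corollary~1.7]{S2}, so there is no in-paper argument to compare yours against. Judged on its own, your outline has the right architecture (simultaneous induction on $|t|$, case analysis on which roots are reflected, matching the partial-reflection rule against the $\cc$-vector recurrence), and your base case and the ``both reflected'' case are fine. But there is a genuine gap, and it sits exactly where you yourself point: the inductive hypothesis in part~(1) records only $|\langle v_i^t,v_j^t\rangle|=|b_{ij}^t|$, and in the one-sided-reflection case you must evaluate $|\langle v_i^t,v_j^t\rangle-\langle v_i^t,v_k^t\rangle\langle v_j^t,v_k^t\rangle|$ against $|b_{ij}^t+b_{ik}^tb_{kj}^t|$. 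Since $|a-bc|$ is not determined by $|a|,|b|,|c|$, the induction as stated does not close; you need to carry the actual signs of the inner products (relative to the arrow orientations and the tropical signs of the roots) through every mutation. The same sign information is needed in part~(2) to see that $-\langle v_j^t,v_k^t\rangle v_k^t$ equals $[\epsilon_k b_{kj}^t]_+\,\cc_k^t$ rather than its negative.

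You correctly diagnose that the missing ingredient is the preservation of admissibility of the quasi-Cartan companion under mutation, but you then attribute that step to~\cite{S2} without proving it. That is circular here: the statement you are asked to prove \emph{is}~\cite[Corollary~1.7]{S2}, and the admissibility-preservation theorem is its entire technical content. To make the argument self-contained you would have to strengthen the induction hypothesis to a signed statement --- e.g.\ that one can choose signs $\varepsilon_{ij}^t$ with $\varepsilon_{ij}^t\langle v_i^t,v_j^t\rangle=b_{ij}^t$ satisfying the admissibility condition on oriented cycles (or, in the acyclic $2$-complete setting of this paper, the more concrete geometric statement about which hyperplanes separate which, as in Corollary~\ref{sep} and Proposition~\ref{sep-node}) --- and verify that this signed datum propagates under a single partial reflection. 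Without that, the proposal is a correct reduction of the theorem to itself rather than a proof.
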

In other words, Theorem~\ref{mut-trans} says that mutating the initial configuration of roots by partial reflections we obtain $n$-tuples of $\cc$-vectors belonging to one $Y$\!-\,seed.





\section{Corollaries}
\label{cor}
In this section, we list some corollaries of the geometric construction above. Note that most of these follow from~\cite{W} where they are proved by purely combinatorial methods.

\begin{cor}
\label{min}
Let $Q$ be a $2$-complete acyclic quiver. Then 
\begin{itemize}
\item[(1)] All quivers in the mutation class of $Q$ are $2$-complete.

\item[(2)] If $Q'$ is a quiver in the mutation class of $Q$ and $\{b_{ij} \}$  and $\{b_{ij}' \}$ are the weights of arrows in $Q$ and $Q'$,
then $|b_{ij}|\le |b_{ij}'|$ for all $i\ne j$.

\end{itemize}

\end{cor}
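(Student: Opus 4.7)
The plan is to derive both statements from the geometric setup of Section~\ref{s-refl} via Theorem~\ref{mut-trans}. Throughout, let $(B^t,(v_1^t,\dots,v_n^t))$ denote the $Y$-seed produced from $(B,(v_1,\dots,v_n))$ by an arbitrary mutation sequence $t$; Theorem~\ref{mut-trans} tells us that $|\langle v_i^t,v_j^t\rangle|=|b_{ij}^t|$ and that the reflections $r_{v_1^t},\dots,r_{v_n^t}$ still generate $G$.

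For part (1), the $2$-completeness of $Q$ gives $\langle v_i,v_j\rangle=-|b_{ij}|\le-2$ for $i\ne j$, so, as noted in Section~\ref{s-refl}, the mirrors $\Pi_i$ are pairwise disjoint in the cone $\C$ and $G$ is the universal Coxeter group $\Z/2\Z\ast\cdots\ast\Z/2\Z$ of rank $n$. Its only defining relations being $s_i^2=1$, any two \emph{distinct} reflections in $G$ generate an infinite dihedral subgroup. The reflections $r_{v_i^t}$ are pairwise distinct, since no two columns of the nonsingular $\cc$-matrix $C^t$ can span the same line, so for $i\ne j$ the product $r_{v_i^t}r_{v_j^t}$ has infinite order, the mirrors $\Pi_{v_i^t}$ and $\Pi_{v_j^t}$ miss each other in $\C$, and hence $|\langle v_i^t,v_j^t\rangle|\ge 2$. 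Combined with Theorem~\ref{mut-trans} this gives $|b_{ij}^t|\ge 2$, i.e.\ $Q^t$ is $2$-complete.

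For part (2), I would argue by induction on the length of the mutation sequence producing $Q'$. The base case is immediate. For the inductive step write $Q^{t+1}=\mu_k(Q^t)$. Entries of $B^{t+1}$ in the $k$-th row or column are merely negated, so their absolute values are preserved. For $i,j\ne k$ the mutation rule adds $\frac{|b_{ik}^t|b_{kj}^t+b_{ik}^t|b_{kj}^t|}{2}$ to $b_{ij}^t$; this vanishes when $b_{ik}^t$ and $b_{kj}^t$ have opposite signs, so the bound is preserved. In the same-sign case the added term has absolute value $|b_{ik}^t||b_{kj}^t|\ge 4$ by part (1). If its sign agrees with that of $b_{ij}^t$ the weight strictly grows and there is nothing more to check; if it opposes $b_{ij}^t$ one must show that $|b_{ik}^t||b_{kj}^t|$ so dominates $|b_{ij}^t|$ that $|b_{ij}^{t+1}|=\bigl||b_{ik}^t||b_{kj}^t|-|b_{ij}^t|\bigr|\ge|b_{ij}|$.

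The main obstacle is precisely this last opposite-sign subcase: one must rule out a triangle $(i,j,k)$ in $B^t$ where $b_{ij}^t$ has ``modest'' absolute value while the path through $k$ pulls the mutated weight back downwards. The tool here is the Speyer--Thomas criterion (Theorem~\ref{ST}): the condition $\langle u_i,u_j\rangle\le 0$ on same-sign pairs of $\cc$-vectors, combined with the prescribed product-of-reflections identity, rigidly controls the possible triangles inside a single $Y$-seed and excludes the problematic sign patterns. An equivalent purely combinatorial verification, which the above geometric sketch is meant to parallel, is carried out in~\cite{W}.
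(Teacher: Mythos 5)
Your part (1) is essentially the paper's (implicit) argument and is correct: the $\cc$-vectors of a $Y$\!-\,seed are roots giving pairwise distinct reflections, distinct reflections in the universal Coxeter group $(\Z/2\Z)^{*n}$ have a product of infinite order, and since $\langle v_i^t,v_j^t\rangle$ is an integer, $|\langle v_i^t,v_j^t\rangle|\le 1$ would force the dihedral group $\langle r_{v_i^t},r_{v_j^t}\rangle$ to be finite; Theorem~\ref{mut-trans} then gives $|b_{ij}^t|\ge 2$. (I would lean on integrality rather than on ``the mirrors miss each other in $\C$'', since disjointness of mirrors alone does not immediately convert into the bound $2$, but the conclusion stands.)

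Part (2) has a genuine gap. Your inductive invariant ``$|b^t_{ij}|\ge|b_{ij}|$ for all $i,j$'' does not propagate through a single mutation: in the opposite-sign subcase the new entry is $\bigl||b^t_{ik}||b^t_{kj}|-|b^t_{ij}|\bigr|$, and nothing in the hypothesis prevents, say, $|b^t_{ik}|=|b^t_{kj}|=2$, $|b^t_{ij}|=5$, which would produce the value $1$. Such decreases genuinely occur (every non-acyclic seed admits a decreasing mutation, Corollary~\ref{decreasing}); what must be shown is that they never undershoot the acyclic value, and that is global information about how far the current seed is from the acyclic belt, not something visible in one triangle $(i,j,k)$. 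The appeal to Theorem~\ref{ST} cannot close this: the Speyer--Thomas conditions constrain \emph{signs} of inner products of same-sign $\cc$-vectors, whereas the missing step is a \emph{magnitude} inequality. The paper's route avoids induction on mutations altogether: by Theorem~\ref{mut-trans}, $|b^t_{ij}|=|\langle v_i^t,v_j^t\rangle|$, where $v_i^t,v_j^t$ lie in the $G$-orbits of two distinct simple roots; the absolute value of the inner product of two roots with disjoint mirrors is a monotone function of the distance between the mirrors, and among all pairs of mirrors taken from these two orbits the minimal distance is realized by two walls of a single fundamental chamber (two edges of the tree $\G$ meeting at a vertex), i.e.\ by the pair $(v_i,v_j)$ itself. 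This yields $|b^t_{ij}|\ge|b_{ij}|$ directly; alternatively the paper simply cites the combinatorial proof in~\cite{W}. To repair your write-up you would either need to strengthen the inductive invariant (effectively reproving the structure results of Corollary~\ref{sep} and Corollary~\ref{decreasing} along the way, as in~\cite{W}) or switch to the distance argument above.
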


\begin{cor}
\label{sep}
Let $Q'$ be a non-acyclic quiver mutation-equivalent to a $2$-complete acyclic quiver $Q$ of rank $n$. Then 
\begin{itemize}
\item[(1)] There exists an acyclic subquiver $Q_1$ of $Q'$ of rank $n-1$.

\item[(2)] Denote the vertex $Q'\setminus Q_1$ by $k$. Then the vertices of $Q_1$ split in two groups $I$ and $J$, such that for every $i\in I$ and $j\in J$ one has $b_{ik}>0$, $b_{kj}>0$, and $b_{ji}>0$.

\end{itemize}

\end{cor}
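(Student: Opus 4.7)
My plan is to use the geometric interpretation of the $Y$-seed associated to $Q'$ from Section~\ref{s-refl} together with Theorem~\ref{ST} to pin down the vertex $k$ and the partition $(I,J)$. Since $Q'$ is $2$-complete by Corollary~\ref{min}(1), its $\cc$-vectors $u_1,\ldots,u_n$ satisfy $|\langle u_i,u_j\rangle|\ge 2$ for all $i\ne j$. The first step is to argue that, since $Q'$ is non-acyclic, at least one $\cc$-vector is positive and at least one is negative: if all $u_i$ were positive, Theorem~\ref{ST}(2) would provide an ordering of the vertices in which the product of reflections equals $s_1\cdots s_n$, and combined with Theorem~\ref{ST}(1) this forces the exchange matrix, read off in that ordering, to have a consistent sign pattern, making $Q'$ acyclic. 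The all-negative case is symmetric.

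Next I apply Theorem~\ref{ST}(2) to order the $\cc$-vectors with the $p\ge 1$ positives first and the $n-p\ge 1$ negatives second, so that $r_{u_{\sigma(1)}}\cdots r_{u_{\sigma(n)}}=s_1\cdots s_n$, and set $k=\sigma(p+1)$, the index of the ``first negative''. To establish (1), I claim that the sequence $\sigma(1),\ldots,\sigma(p),\sigma(p+2),\ldots,\sigma(n)$ is an acyclic ordering of $Q_1=Q'\setminus\{k\}$. This should follow by restricting the ST-factorization to the remaining $n-1$ positions and reading off the orientations of the edges of $Q_1$ from the inner-product pattern of the $(n-1)$-tuple of $\cc$-vectors together with Theorem~\ref{mut-trans}(1). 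For (2), set
\[
I=\{i\ne k:b_{ik}>0\},\qquad J=\{j\ne k:b_{kj}>0\};
\]
by $2$-completeness these are disjoint and exhaust $V(Q_1)$. The identity $b_{ji}>0$ for all $i\in I$, $j\in J$ is equivalent, under the acyclic ordering exhibited in the proof of (1), to the statement that every vertex of $J$ precedes every vertex of $I$; this can be read off by comparing the positions of $I$- and $J$-vertices in the ST-ordering to the positive/negative boundary at $k=\sigma(p+1)$.

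The main obstacle is the careful bookkeeping required to extract the sign of $b^t_{ij}$ from the $\cc$-vector data, since Theorem~\ref{mut-trans}(1) gives only absolute values; the sign information has to be recovered from the ST-factorization and the positions of the vertices relative to the positive/negative boundary. The rigidity of reflection factorizations in the universal Coxeter group associated to a $2$-complete quiver is what ultimately makes this sign extraction possible.
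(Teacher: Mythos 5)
Your identification of the distinguished vertex $k$ is incorrect, and this breaks both parts of the argument. The vertex $k$ in Corollary~\ref{sep} is \emph{not} in general the first negative $\cc$-vector in the Speyer--Thomas ordering: it is the unique vertex whose hyperplane (equivalently, whose node on the Cayley graph) separates the others, i.e.\ the vertex of the unique decreasing mutation of Corollary~\ref{decreasing}. As Figures~\ref{ind1} and~\ref{ind2} show, the separating node can be either positive or negative; when it is positive it sits at the \emph{end} of the block of positive $\cc$-vectors, not at the start of the negative block. Concretely, take $n=4$ with all initial weights $2$ and apply $\mu_4,\mu_3,\mu_2,\mu_4$. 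The resulting $Y$\!-\,seed has $\cc$-vectors $(v_1,\,-v_2-2v_4,\,-v_3-2v_4,\,v_4)$ and quiver with arrows $4\to2$, $4\to3$, $1\to4$ (weight $2$), $2\to1$, $3\to1$ (weight $6$), $2\to3$ (weight $2$). The unique Speyer--Thomas ordering is $s_1\cdot s_4\cdot(s_4s_2s_4)\cdot(s_4s_3s_4)=s_1s_2s_3s_4$, so your recipe gives $k=2$; but $Q'\setminus\{2\}$ contains the oriented cycle $1\to4\to3\to1$, and with $I=\{4\}$, $J=\{1,3\}$ one has $b_{34}<0$, so claim (2) fails as well. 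The correct choice here is $k=4$ (the decreasing mutation, with \emph{positive} $\cc$-vector $v_4$), giving $I=\{1\}$, $J=\{2,3\}$ and the acyclic $Q_1$ on $\{1,2,3\}$.

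Beyond this, two further steps are asserted rather than proved: that an all-positive $C$-matrix forces acyclicity (the inner products from Theorem~\ref{ST}(1) only control $|b_{ij}|$, so ``a consistent sign pattern'' does not follow), and the sign extraction you yourself flag as the main obstacle -- which is in fact the entire content of the statement. For comparison, the paper does not reprove the corollary: it cites Warkentin~\cite{W} for a combinatorial proof and, in Remark~\ref{geom}, derives it from the geometric picture of Section~\ref{s-refl} -- for a non-acyclic $Y$\!-\,seed exactly one of the $n$ hyperplanes $u_i^\perp$ separates the remaining ones inside the cone $\C$; that hyperplane determines $k$, and $I$ and $J$ are the vertices whose hyperplanes lie on the two sides of it. If you want a proof along your lines, you would need to locate $k$ via this separation property (or via the decreasing mutation) rather than via the sign change in the Speyer--Thomas factorization.
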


\begin{remark}
\label{geom}
Corollary~\ref{sep} was proved in~\cite{W}. The geometric meaning of the corollary is the following: given a non-acyclic $Y$\!-\,seed (and the corresponding $n$-tuple of hyperplanes in the cone $\C$), there is exactly one hyperplane separating others. Removing the vertex of $Q'$ corresponding to this hyperplane results in an acyclic subquiver.   
\end{remark}

\begin{definition}[Increasing/decreasing mutation]
\label{decr}
Let $Q$ be a quiver. We say that a mutation $\mu_k$ of $Q$ is {\it increasing}  if it increases the weight of at least one arrow and does not decrease all the other weights. Similarly, $\mu_k$ is {\it decreasing}  if it decreases the weight of at least one arrow and does not increase all the other weights.

\end{definition}

\begin{cor}
\label{decreasing}
Let $Q'$ be a non-acyclic quiver mutation-equivalent to a $2$-complete acyclic quiver $Q$. Then there exists a unique number $k\in \{1,\dots,n\}$ such that the mutation $\mu_k$ of $Q'$ is decreasing. All the other mutations $\mu_i$, $i\ne k$ are increasing.
 
\end{cor}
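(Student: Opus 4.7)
The plan is as follows. By Corollary~\ref{sep} there exists a vertex $k$ and a partition $I \sqcup J$ of the remaining vertices with arrows $i \to k$ for $i \in I$, $k \to j$ for $j \in J$, and $j \to i$ for $(i,j) \in I \times J$; by Remark~\ref{geom}, such $k$ is unique. I claim $\mu_k$ is the required unique decreasing mutation.

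First I would analyze $\mu_k$ explicitly. The weights of arrows incident to $k$ are preserved (only orientations reverse), and arrows entirely within $I$ or within $J$ are unchanged because the mutation formula in Figure~\ref{quivermut} gives no contribution when $b_{ik}$ and $b_{kj}$ have opposite signs (which happens precisely when both endpoints are in $I$, or both in $J$). For each $(i,j) \in I \times J$ the triple $\{i,k,j\}$ carries an oriented 3-cycle $i \to k \to j \to i$ with weights $p = b_{ik}$, $q = b_{kj}$, $r = |b_{ij}|$, and the mutation formula gives $b'_{ij} = -r + pq$, so $|b'_{ij}| = |pq - r|$. Consequently $\mu_k$ is strictly decreasing precisely when $pq < 2r$ for every such triple.

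The main step is to prove the Markov-type inequality $b_{ik}\,b_{kj} < 2|b_{ij}|$ at every pair $(i,j) \in I \times J$. I would establish this geometrically: by Theorem~\ref{mut-trans} the three $\cc$-vectors $u_i, u_j, u_k$ are roots whose pairwise absolute inner products are $p,q,r$, and they sit inside a valid $Y$-seed. By Theorem~\ref{ST}, the signs of the $\cc$-vectors are highly constrained; I expect the separating status of $k$ to force $u_k$ to have the opposite sign from $u_i$ and $u_j$, so that $\langle u_i,u_j\rangle = -r$ (by Speyer--Thomas condition (1)) while the signs of $\langle u_i,u_k\rangle$ and $\langle u_j,u_k\rangle$ are compatible in the sense required by the Coxeter-product condition (2). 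Combined with the 2-completeness bound $|\langle u_a,u_b\rangle|\ge 2$ from Corollary~\ref{min}(1), a short computation with the reflected vector $u_j' = u_j - \langle u_j,u_k\rangle u_k$ then yields $pq < 2r$.

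For $i' \ne k$ I would show $\mu_{i'}$ is increasing by the symmetric analysis: $i'$ is not the separating vertex, so it plays a different combinatorial role in the 3-cycles it belongs to, and the weight transformation $r \mapsto |pq - r|$ now lands in the regime $pq > 2r$, strictly increasing some cross-weight while decreasing none. Uniqueness of the decreasing mutation is then automatic from Remark~\ref{geom} together with the fact that the only mutation that can strictly decrease any weight is the one at the separating vertex. The hard part of this plan is the Markov-type inequality $pq < 2r$; if the sign analysis of $\cc$-vectors via Theorem~\ref{ST} becomes too delicate, a fallback is to induct on the mutation distance from the acyclic representative $Q$, using Corollary~\ref{min}(2) at each step to control the weights.
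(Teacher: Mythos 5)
Your skeleton matches the paper's (very terse) argument: take the vertex $k$ and the partition $I\sqcup J$ from Corollary~\ref{sep}, note that $\mu_k$ only touches pairs in $I\times J$, where it acts on each oriented triangle by $r\mapsto|pq-r|$, and reduce everything to the inequality $pq<2r$. That reduction is correct and is a genuinely useful amount of detail beyond what the paper writes (the paper essentially says ``the decreasing mutation corresponds to the separating hyperplane'' and defers to~\cite{W}). The gap is in your proposed proof of the key inequality. The premise of your sign analysis is false: the separating $\cc$-vector $u_k$ does \emph{not} in general have sign opposite to all $u_i$, $i\in I$, and $u_j$, $j\in J$ --- in Figures~\ref{ind1} and~\ref{ind2} the separating node is green while the nodes $q+1,\dots,k-1$, which lie in $I$, are also green, so Theorem~\ref{ST}(1) tells you nothing about $\langle u_i,u_j\rangle$ when $u_i$ is green and $u_j$ is red. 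More fundamentally, $pq<2r$ is a statement about \emph{magnitudes} of inner products, and no sign bookkeeping via Theorem~\ref{ST} can produce it; the actual source is the geometric fact recorded in Remark~\ref{geom}, that the hyperplane $u_k^\perp$ \emph{separates} $u_i^\perp$ from $u_j^\perp$ inside the cone $\C$. Writing $|\langle u_a,u_b\rangle|=2\cosh d(u_a^\perp,u_b^\perp)$ for disjoint hyperplanes, separation gives $d(u_i^\perp,u_j^\perp)\ge d(u_i^\perp,u_k^\perp)+d(u_k^\perp,u_j^\perp)$, hence $r\ge \tfrac{pq}{2}+2\sinh(\cdot)\sinh(\cdot)$ and $2r> pq$ (with a little extra care when $p=2$ or $q=2$ and when the signature is not Lorentzian). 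This replaces, rather than supplements, the sign analysis.

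A second gap: the claim that every $\mu_{i'}$ with $i'\ne k$ is increasing is asserted by ``symmetric analysis'' but not carried out, and the combinatorics is genuinely different there. For $i'\in I$ the affected pairs are not contained in $I\times J$; for instance the pair $(j,k)$ changes via the oriented triangle $j\to i'\to k\to j$, and you need the \emph{reverse} inequality $|b_{ji'}|\,|b_{i'k}|\ge 2|b_{jk}|$, which again comes from the fact that $u_{i'}^\perp$ does not separate the other two hyperplanes, so reflecting in it pushes them apart. Finally, your fallback does not close either gap: Corollary~\ref{min} only bounds weights from below, and a lower bound of $2$ is consistent with $|pq-r|=r$ (e.g.\ $p=q=r=2$, the Markov quiver), a configuration excluded only because it is not mutation-acyclic --- which is precisely the information the separation property, or the combinatorial induction of~\cite{W}, supplies.
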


Corollary~\ref{decreasing} (also proved in~\cite{W}) follows from Corollary~\ref{sep}: the decreasing mutation corresponds to the only hyperplane separating others. This, in its turn, follows from the geometric interpretation of the weights of $Q'$ as inner products of the corresponding vectors (see Theorem~\ref{mut-trans}). The mutations that are neither increasing nor decreasing are sink/source mutations in acyclic $Y$\!-\,seeds.

\begin{remark}
Corollary~\ref{decreasing} implies that for every quiver $Q'$ mutation-equivalent to a $2$-complete acyclic quiver $Q$ there is an algorithm transforming $Q'$ to its acyclic representative: one only needs to apply the decreasing mutations finitely many times. The same procedure provides a finite time check whether a given quiver is mutation-equivalent to a $2$-complete acyclic one.

\end{remark}

The following corollary also follows from Remark~\ref{geom}. 

\begin{cor}
Let $Q$ be a $2$-complete acyclic quiver, and let
 $Q'$ be mutation-equivalent to $Q$. Suppose that $Q_1\subset Q'$ is a subquiver having a source (or sink).
Then $Q_1$ is acyclic.  

\end{cor}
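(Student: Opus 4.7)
My plan is to reduce to the case where $Q'$ is non-acyclic and then exploit the very rigid structure of non-acyclic quivers in the mutation class guaranteed by Corollary~\ref{sep} (equivalently Remark~\ref{geom}). If $Q'$ is already acyclic, then any subquiver is acyclic and we are done. So assume $Q'$ is non-acyclic. By Corollary~\ref{sep} there is a distinguished vertex $k$ of $Q'$ such that $Q'\setminus\{k\}$ is acyclic, and the remaining vertices partition as $I\sqcup J$ with the arrows incident to $k$ and between $I$ and $J$ all oriented in one prescribed way: $i\to k$ for $i\in I$, $k\to j$ for $j\in J$, and $j\to i$ for $i\in I$, $j\in J$. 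In particular, every oriented cycle of $Q'$ must pass through $k$.

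The plan is now a short case analysis on where a source $v$ of $Q_1$ sits (the sink case is handled symmetrically by reversing all arrows). If $k\notin Q_1$ then $Q_1$ is contained in the acyclic quiver $Q'\setminus\{k\}$ and hence acyclic. So suppose $k\in Q_1$. The vertex $v$ cannot lie in $J$, because then $k\to v$ would be an arrow of $Q_1$ pointing into $v$, contradicting the source property. Thus either $v=k$ or $v\in I$.

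In the subcase $v=k$, the source condition forces $I\cap Q_1=\emptyset$ (any $i\in I\cap Q_1$ would supply an arrow $i\to k$ in $Q_1$). Then $Q_1$ has vertex set $\{k\}\cup (J\cap Q_1)$, with $k$ having only outgoing arrows in $Q_1$, so any oriented cycle of $Q_1$ would lie in the acyclic $J\cap Q_1\subset Q'\setminus\{k\}$. In the subcase $v\in I$, the source condition forces $J\cap Q_1=\emptyset$ (every $j\in J$ gives $j\to v$), so $Q_1$ has vertex set $(I\cap Q_1)\cup\{k\}$ with $k$ only receiving arrows; again any cycle would lie in the acyclic part. In either situation $Q_1$ is acyclic.

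The core conceptual input is the very strong form of non-acyclicity provided by Corollary~\ref{sep}, which tells us that all cycles go through the unique separating vertex $k$; once this is in hand, the rest is bookkeeping. The only minor obstacle is being careful about what ``subquiver'' means (full subquiver versus allowing the omission of arrows), but since omitting arrows cannot create cycles the argument above works uniformly. No subtlety beyond checking the three mutually exclusive locations of the source (in $J$, at $k$, or in $I$) is needed.
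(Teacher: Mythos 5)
Your proof is correct and takes essentially the same route as the paper, which disposes of this corollary by simply noting that it follows from Remark~\ref{geom} (equivalently Corollary~\ref{sep}): you have supplied the case analysis on where the source sits relative to the separating vertex $k$ that the paper leaves implicit. One small caveat: your closing claim that the argument ``works uniformly'' if arrows may be omitted is not right --- deleting arrows can manufacture a source in a subquiver that still contains an oriented cycle (e.g.\ strip all arrows entering one extra vertex adjoined to a cyclic triple), so the statement genuinely requires the full-subquiver convention, which is the intended one.
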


\begin{definition}
Given a quiver $Q$ (and the exchange matrix $\t B$), we can define the corresponding {\em exchange graph} in the following way. Vertices correspond to $Y$\!-\,seeds $S^t=(B^t,(\cc_1^t,\dots,\cc_n^t))$ (carrying precisely the same information as the matrix $\t B^t$), and two vertices are joined by an edge if the corresponding $Y$\!-\,seeds can be obtained one from another by a single mutation.  

\end{definition}

Note that the exchange graph is always $n$-regular. The following statement is another immediate corollary of Corollary~\ref{decreasing}.

\begin{cor}
\label{tree-cor}
Let $Q$ be a $2$-complete acyclic quiver. Then the corresponding exchange graph  is an $n$-regular tree.
\end{cor}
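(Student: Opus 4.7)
The exchange graph is $n$-regular by construction, so the task is to show it contains no cycles. My plan is a proof by contradiction: assume there is a simple cycle $S_0 - S_1 - \dots - S_m = S_0$ with $m \geq 3$.

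The first step is to rule out the case where the cycle contains a non-acyclic seed of maximal weight. I would introduce the total weight $W(S) := \sum_{i<j}|b_{ij}^S|$ and pick a seed $S_i$ in the cycle realizing the maximum. Its two cycle-neighbors are then reached by mutations that do not increase the weight. By the remark preceding the present corollary, at a non-acyclic $Y$\!-\,seed every mutation is strictly increasing or strictly decreasing, and by Corollary~\ref{decreasing} exactly one mutation is decreasing; so if $S_i$ were non-acyclic, both cycle-mutations would be decreasing, contradicting uniqueness. Thus $S_i$ is acyclic and by Corollary~\ref{min}(2) satisfies $W(S_i) = W(Q)$.

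Next I would propagate acyclicity around the cycle. At a $2$-complete acyclic seed a short calculation from the matrix mutation rule shows that mutation at an intermediate vertex $k$ strictly increases every $|b_{ij}|$ with $i < k < j$: the formula gives $|b_{ij}| \mapsto |b_{ij}| + b_{ik}b_{kj} \geq |b_{ij}| + 4$. Hence only sink/source mutations preserve weight. It follows that both cycle-mutations at $S_i$ are sink/source mutations, and $S_{i \pm 1}$ are again acyclic of weight $W(Q)$. Iterating around the cycle, the whole cycle lies in the subgraph $\mathcal{A}$ of acyclic $Y$\!-\,seeds joined by sink/source mutations.

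The main obstacle is then to exclude cycles inside $\mathcal{A}$. Since a $2$-complete acyclic quiver has a unique source and a unique sink, the subgraph $\mathcal{A}$ is $2$-regular; moreover, a source mutation at $S$ turns the mutated vertex into the sink of the resulting seed, so a simple cycle in $\mathcal{A}$ cannot mix source and sink mutations without backtracking and must consist of a single type. My plan to exclude closure is to exhibit a strictly monotonic invariant along such a chain, for instance the total $\ell^1$-norm $\Phi(S) := \sum_j \|\cc_j^S\|_1$ of the $\cc$-matrix in the basis of simple roots. Using the partial-reflection formula of Section~\ref{s-refl} together with $|(\cc_j^S, \cc_k^S)| = |b_{jk}^S| \geq 2$ from Theorem~\ref{mut-trans} and the non-positivity of inner products between same-sign $\cc$-vectors from Theorem~\ref{ST}, I expect to show that each step of the chain changes $\Phi$ in a strictly monotonic way, which prevents closure and completes the contradiction.
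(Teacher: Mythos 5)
Your overall strategy coincides with the paper's, which treats the statement as an immediate consequence of Corollary~\ref{decreasing} (increasing mutations move a $Y$\!-\,seed away from the acyclic seeds, the unique decreasing one moves it back), and your first two steps are sound: a maximal-weight seed on a hypothetical cycle cannot be non-acyclic, since its two distinct cycle-neighbours would both have to be reached by the unique decreasing mutation; and at an acyclic $2$-complete seed every non-sink/source mutation strictly increases the total weight by $\sum_{i<k<j}b_{ik}b_{kj}\ge 4$, while a non-acyclic seed always has weight strictly larger than $W(Q)$ because its decreasing mutation would otherwise violate Corollary~\ref{min}(2). This correctly forces the whole cycle into the belt $\mathcal{A}$ of acyclic $Y$\!-\,seeds joined by sink/source mutations, which is $2$-regular and on which a non-backtracking walk uses mutations of a single type.

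The gap is in the final step. The invariant $\Phi(S)=\sum_j\|\cc_j^S\|_1$ is \emph{not} strictly monotonic along $\mathcal{A}$: starting from the initial $Y$\!-\,seed and performing sink mutations, the sink carries a positive $\cc$-vector for each of the first $n$ steps, so by the partial-reflection rule each such mutation merely replaces $\cc_k$ by $-\cc_k$ and leaves every other $\cc$-vector unchanged; the $C$-matrix runs through $I,\ \mathrm{diag}(1,\dots,1,-1),\ \dots,\ -I$, and $\Phi\equiv n$ on these $n+1$ consecutive seeds. (It only jumps at the next step, when the sink's $\cc$-vector is negative and the remaining roots genuinely get reflected.) So the step-by-step strict monotonicity you plan to establish is false, and you cannot restrict to seeds where it happens to hold, since you do not know which seeds lie on the hypothetical cycle. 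A correct finish needs either a refined quantity (for instance $\Phi$ together with the number of negative $\cc$-vectors, with the order of comparison chosen consistently with the direction of the walk), or the Cayley-graph picture of Section~\ref{as}: the reflections of an acyclic $Y$\!-\,seed occupy a single fundamental $n$-star of $\G$, a block of $n$ consecutive sink (or source) mutations conjugates all of them by one generator and thus moves the star to an adjacent one along the tree, and within one star the sign patterns of the $\cc$-vectors are pairwise distinct; since $\G$ is a tree, the belt is an infinite line and carries no cycle.
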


Combinatorially, increasing mutations send a $Y$\!-\,seed further away from the ``line'' of acyclic $Y$\!-\,seeds (and from the initial $Y$\!-\,seed), while the decreasing one moves it towards the initial $Y$\!-\,seed.

\section{$Y$\!-\,seeds on the Cayley graph}
\label{seeds}
From now on, we deal with a $2$-complete acyclic quiver $Q$.
 
As we have seen before, $\cc$-vectors can be identified with roots of the root system $\Delta$ constructed by $Q$, so they correspond to some elements (reflections) of the Weyl group $G$. Since $Q$ is $2$-complete, $G$ is the universal Coxeter group
$$ G=\langle s_1,\dots,s_n \ | \ s_i^2=e \rangle,$$
so its Cayley graph $\G$ is an $n$-regular tree. The vertices of the Cayley graph are the elements of $G$, and the edges are labeled by $s_i$. We can assign reflections to the edges of $\G$: if an edge labeled by $s_i$ emanates from a vertex $w\in G$, then we can write the reflection $r_i=ws_iw^{-1}$ on this edge (one can easily check that this assignment is well-defined, i.e. it does not depend on the endpoint of the edge). Note that, since $G$ has no Coxeter relations of odd degree, the generators $s_i$ are not conjugate to each other, and thus the index $i$ in the presentation $ws_iw^{-1}$ is defined uniquely for every reflection. Moreover, since $\G$ is a tree, the reduced word for $w$ is also defined uniquely.   

Therefore, every $Y$\!-\,seed provides an $n$-tuple of reflections, which we can find on the Cayley graph $\G$. This section is devoted to understanding a general form of such $n$-tuples on $\G$ and the combinatorics of mutations of these $n$-tuples. This will be one of the key tools in Section~\ref{converse}.    

\subsection{Nodes}
\label{sec-nodes}

We will place a {\it node} at the midpoint of each edge of $\Ca$ (we use the term ``node'' to emphasize the difference with the vertices of $\Ca$). The union of all nodes decomposes the graph into $n$-star shaped ``fundamental domains'', we will call them {\em fundamental $n$-stars}. 

We will think about $\Ca$ as embedded into a plane, with rays in each star labeled by the corresponding reflections $r_1,r_2\dots,r_n$ in a clockwise order, see Fig.~\ref{tree}.

\begin{figure}[!h]
\begin{center}
\psfrag{a}{\small (a)}
\psfrag{b}{\small (b)}
\psfrag{s1}{\scriptsize $s_1$}
\psfrag{s2}{\scriptsize $s_2$}
\psfrag{s3}{\scriptsize $s_3$}
\psfrag{s4}{\scriptsize $s_4$}
\psfrag{s5}{\scriptsize $s_5$}
\psfrag{s2s1s2}{\scriptsize $s_2s_1s_2$}
\psfrag{s2s3s2}{\scriptsize $s_2s_3s_2$}
\psfrag{s2s4s2}{\scriptsize $s_2s_4s_2$}
\psfrag{s2s5s2}{\scriptsize $s_2s_5s_2$}
\psfrag{s3s1s3}{\tiny $s_3s_1s_3$}
\psfrag{s3s1s2s1s3}{\tiny $s_3s_1s_2s_1s_3$}
\psfrag{s3s1s3s1s3}{\tiny $s_3s_1s_3s_1s_3$}
\psfrag{s3s1s4s1s3}{\tiny $s_3s_1s_4s_1s_3$}
\psfrag{s3s1s5s1s3}{\tiny $s_3s_1s_5s_1s_3$}
\epsfig{file=./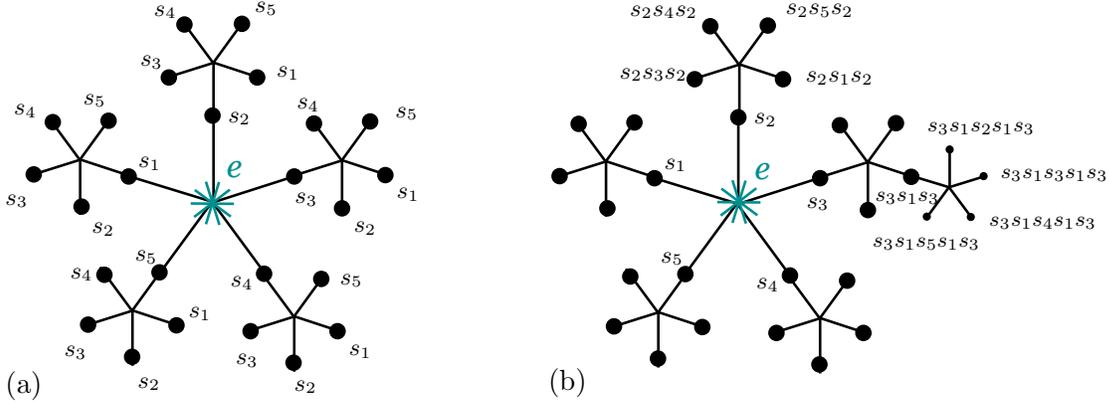,width=0.95\linewidth}
\caption{Part of the Cayley graph $\Ca$ for $G$ with the nodes in the midpoints of edges: (a) labeled by the generators,  (b) by associated reflections.}
\label{tree}
\end{center}
\end{figure}

\subsection{Ordered $Y$\!-\,seeds}
\label{sec order}

We now want to define a (natural) order on the $\cc$-vectors in a $Y$\!-\,seed. 

For $2$-complete acyclic quiver $Q$ the {\it natural order} on vertices of $Q$ is given by $i<j$ if $b_{ij}>0$ 
(i.e. if there is an arrow from $i$ to $j$). So that, the source is the minimal vertex and the  sink is the maximal one.

For a non-acyclic quiver $Q^t$ in a $2$-complete acyclic mutation class, let $\mu_k$ be the  decreasing mutation.
Recall from Section~\ref{cor} that the vertices of quiver $Q^t$ can be described as $I\cup \{k\}\cup J$, where  for every $i\in I$ and $j\in J$ one has $I=\{i \ | \ b_{ik}>0 \}$, $J=\{j \ | \ b_{jk}<0 \}$, 
so that the subquiver spanned by  $I$ and $J$ is acyclic, and for every $i\in  I$ and $j\in J$ one has $b_{ji}>0$. 
So, if we reverse the arrows between  $i\in  I$ and $j\in J$, the quiver $Q^t$ turns into a $2$-complete acyclic quiver (denote it $\t Q^t$).

The {\it natural order} on the vertices of a non-acyclic quiver  $Q^t$ is defined to be the natural order on  $\t Q^t$. 

By an {\it ordered $Y$\!-\,seed} we mean a $Y$\!-\,seed with the vertices of underlying quiver renumbered according to the natural order.

\subsection{Acyclic $Y$\!-\,seeds} 
\label{as}
The initial generating reflections $s_1,\dots,s_n$ are associated with the nodes lying in one fundamental $n$-star (labeled by $e$).
It is also easy to see that any tuple of generating reflections $r_1,\dots,r_n$ obtained from any acyclic $Y$\!-\,seed looks similarly:

\begin{prop} 
\label{acyclic}
If $r_1,\dots, r_n$ are reflections arising from an acyclic $Y$\!-\,seed, then the corresponding nodes of $\G$ lie in one fundamental $n$-star.

\end{prop}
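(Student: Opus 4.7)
The plan is to prove Proposition~\ref{acyclic} by induction on the mutation distance from the initial seed $S_0$ in the exchange tree, restricted to paths passing only through acyclic $Y$-seeds. First I would observe that the subgraph of acyclic $Y$-seeds in the exchange tree is convex (hence connected and containing $S_0$): Corollary~\ref{min} gives that acyclic seeds have the smallest absolute weights in the mutation class, and Corollary~\ref{decreasing} says a non-acyclic seed has a unique decreasing mutation direction. Any mutation from a non-acyclic seed to an acyclic one must therefore be this unique decreasing direction; since a path in a tree cannot backtrack, once the unique path from $S_0$ to a seed enters the non-acyclic region it cannot return to the acyclic region. Consequently every acyclic $Y$-seed is reached from $S_0$ by a finite sequence of source/sink mutations through acyclic seeds only.

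For the base case, $S_0$ has reflections $s_1,\dots,s_n$ on the $n$-star at $e$. For the inductive step, suppose $S=\mu_k(S')$, where $S'$ is acyclic with reflections $r'_i = w s_{\pi(i)} w^{-1}$ on the $n$-star at some $w\in G$ (with $\pi$ a permutation of $\{1,\dots,n\}$), and $k$ is the source or sink of $S'$ in its natural order. The mutation rule splits into two cases according to the sign of $c_k$. In the ``same direction'' case --- $c_k>0$ with $k$ a source, or $c_k<0$ with $k$ a sink --- the sign condition on $b^t_{jk}$ is satisfied for every $j\ne k$, so $c_j\mapsto c_j-(c_j,c_k)c_k = r_{c_k}(c_j)$ and the reflections transform by $r_j\mapsto r_k r_j r_k$ for $j\ne k$, with $r_k$ unchanged. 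Left-multiplication by $r_k$ is an automorphism of $\G$ swapping the two endpoints $w$ and $w s_{\pi(k)}$ of the edge carrying the node $r_k$, so it maps the $n$-star at $w$ to the $n$-star at $w s_{\pi(k)}$; together with $r_k$ itself, which lies on the shared edge of the two $n$-stars, the new reflections lie on the $n$-star at $w s_{\pi(k)}$. In the ``opposite direction'' case --- $c_k>0$ with $k$ a sink, or $c_k<0$ with $k$ a source --- the sign condition on $b^t_{jk}$ fails for every $j\ne k$, so only $c_k\mapsto -c_k$ while all other $c_j$ are untouched; since $r_{c_k}=r_{-c_k}$, the $n$-tuple of reflections is unchanged as a set, and still lies on the $n$-star at $w$.

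The main obstacle I anticipate is the convexity argument at the start: one must combine the uniqueness of the decreasing direction at a non-acyclic seed with the tree structure of the exchange graph to rule out an acyclic seed that is disconnected from $S_0$ in the acyclic subgraph. Once that is in place the inductive step is essentially a direct reading of the mutation rule, and the geometric interpretation of left-multiplication by $r_k$ as a tree automorphism fixing (as a set) the edge carrying $r_k$ immediately gives the ``$n$-star shift by $s_{\pi(k)}$'' picture in the same-direction case.
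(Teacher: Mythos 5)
Your argument is correct and is essentially the paper's own proof: reduce to the initial $Y$\!-\,seed via sink/source mutations through acyclic seeds, then note that each such mutation either fixes all the reflections or conjugates them all by $r_k$, which moves the nodes to the adjacent fundamental $n$-star. You simply spell out in more detail the two facts the paper asserts without proof (the connectedness of the acyclic seeds under sink/source mutations, and the "preserve all or conjugate all" dichotomy), and both of your justifications are sound.
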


\begin{proof}
Any acyclic $Y$\!-\,seed can be obtained from the initial $Y$\!-\,seed via a sequence of sink/source mutations. Each sink/source mutation either 
preserves all generating reflections or conjugates all of them. In both cases they still lie in one fundamental $n$-star (either the same or not). 

\end{proof}

Knowing the  reflections arising from an acyclic $Y$\!-\,seed is not enough to know the $Y$\!-\,seed itself. What we need in addition are the signs of all roots and the order (the weights of the quiver are known since they are identical to those of $Q$). 

\begin{figure}[!h]
\begin{center}
\psfrag{i}{\small \it Initial $Y$\!-\,seed}
\psfrag{s1}{\scriptsize $s_1$}
\psfrag{s2}{\scriptsize $s_2$}
\psfrag{s3}{\scriptsize $s_3$}
\psfrag{s4}{\scriptsize $s_4$}
\psfrag{s5}{\scriptsize $s_5$}
\psfrag{si}{\tiny sink}
\psfrag{so}{\tiny source}
\epsfig{file=./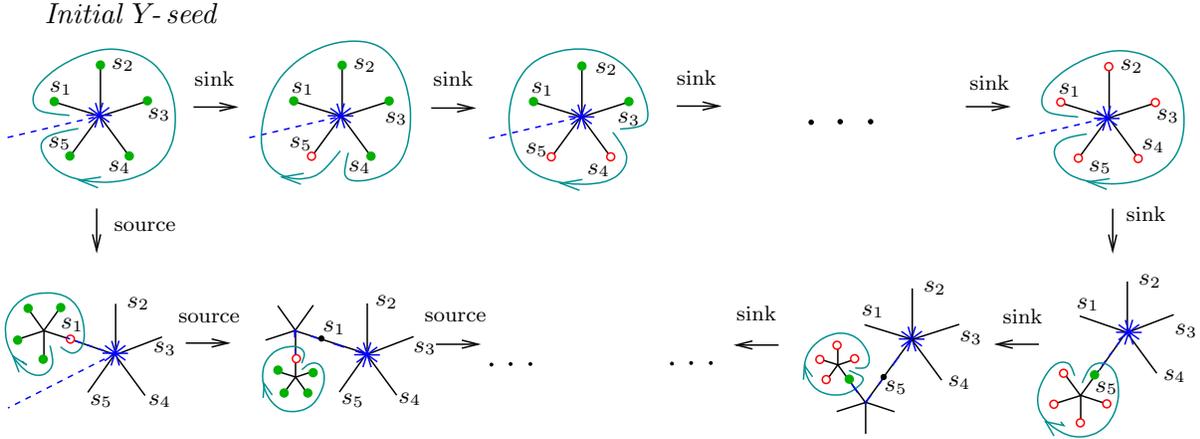,width=0.99\linewidth}
\caption{ Acyclic $Y$\!-\,seeds.}
\label{ac-fig}
\end{center}
\end{figure}

All the ways how a $Y$\!-\,seed can look like are shown in Figure~\ref{ac-fig}. We mark the positive roots as (green) filled nodes, and negative roots as (red) unfilled nodes (the coloring originates from the notion of {\em maximal green sequence}, see~\cite{K}); we will use red/green notation instead of unfilled/filled in the sequel. The natural order is shown by a long arrow around the $Y$\!-\,seed: it is clockwise with the source at the beginning of the arrow.
Figure~\ref{ac-fig} can be easily verified by using the definition of a $Y$\!-\,seed mutation.

An easy induction shows that a product of the nodes (i.e., the corresponding reflections) in clockwise order starting from the first green is precisely a Coxeter element $s_1\dots s_n$ of $G$. For the $Y$\!-\,seeds corresponding to the initial fundamental chamber (the top row in Fig.~\ref{ac-fig}) we draw a dotted ray from $e$ to indicate the first node in the product. Note that this agrees with Theorem~\ref{ST}.

\subsection{Separating nodes}

By a {\it shortest} path between two points on $\Ca$ we mean a geodesic path with respect to the graph metric  (taking in account that $\Ca$ is a tree, a ``shortest'' path is the same as a ``non-returning'' path).

Given nodes $N_1,\dots,N_n$ on $\Ca$, we define a {\it convex hull} of these nodes as a union of all shortest paths 
between $N_i$ and $N_j$, $i,j\in \{1,\dots n \}$. It is a finite subtree of $\Ca$ (if both vertices and nodes are considered).

We say that a node $N$ {\it separates} nodes $N_i$ and $N_j$ in $\Ca$ if the shortest path connecting $N_i$ with $N_j$ contains $N$.
Given a set of nodes $N_1,\dots,N_n$, we call $N_i$ a {\it separating node} if $N_i$ separates at least two other nodes (i.e., $N_i$ belongs to the convex hull of all the other nodes). 

Using Proposition~\ref{acyclic}, Remark~\ref{geom} can be now reformulated in the following way.

\begin{prop}
\label{sep-node}
Let $Q$ be a $2$-complete acyclic quiver. Let $(Q^t,(\cc_1^t,\dots,\cc_n^t))$ be a $Y$\!-\,seed, and let $N_1,\dots,N_n$ be the corresponding nodes of $\G$. If $Q^t$ is acyclic then no $N_i$ is separating, otherwise exactly one of $\{N_i\}$ is a separating node.  

\end{prop}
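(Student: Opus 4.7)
My plan is to split along the acyclicity dichotomy: the acyclic half follows directly from Proposition~\ref{acyclic} and tree combinatorics, while the non-acyclic half is a translation of the geometric assertion in Remark~\ref{geom} into statements about the Cayley tree $\Ca$.

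In the acyclic case, I will invoke Proposition~\ref{acyclic} to place the nodes $N_1,\dots,N_n$ in one fundamental $n$-star of $\Ca$ centered at some common vertex $v$. Because $\Ca$ is a tree, the shortest path between any two of these nodes $N_i$ and $N_j$ consists only of the two half-edges meeting at $v$, so no third node lies on it. Therefore no $N_l$ with $l\ne i,j$ separates $N_i$ from $N_j$, and none of the $N_l$ is separating.

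For the non-acyclic case I will translate Remark~\ref{geom} through the dictionary between $\Ca$ and the tiling of the cone $\C\subset V$ by fundamental chambers of the $G$-action: vertices of $\Ca$ correspond to chambers, and each edge of $\Ca$ bearing a node $N_i$ is dual to the mirror $\Pi_i$ of the reflection $r_i$. Deleting that edge splits $\Ca$ into the two components dual to the two halfspaces of $\C\setminus\Pi_i$, so $N_i$ separates $N_j$ from $N_l$ in $\Ca$ if and only if $\Pi_i$ separates $\Pi_j$ from $\Pi_l$ inside $\C$. Since Remark~\ref{geom} (restating Corollary~\ref{sep}) provides exactly one such separating mirror whenever $Q^t$ is non-acyclic, exactly one $N_i$ is separating. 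The main obstacle is verifying this separation dictionary rigorously, but it follows from the standard identification of the Cayley graph of the universal Coxeter group with the $1$-skeleton of the chamber dual of the $G$-tiling of $\C$; because $G$ is the free product of $n$ copies of $\Z/2\Z$, this dual is already a tree and distinct mirrors do not meet inside $\C$, which is precisely what forces the combinatorial and geometric notions of separation to coincide.
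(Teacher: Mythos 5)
Your proposal is correct and follows essentially the same route as the paper, which presents Proposition~\ref{sep-node} as a direct reformulation of Remark~\ref{geom} combined with Proposition~\ref{acyclic}. You simply make explicit the two ingredients the paper leaves implicit: the tree argument showing nodes in a single fundamental $n$-star cannot separate one another, and the duality between nodes of $\Ca$ and mirrors in $\C$ (valid because the mirrors are pairwise disjoint in the $2$-complete case) that transports the unique separating hyperplane of Remark~\ref{geom} to a unique separating node.
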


\subsection{Non-acyclic $Y$\!-\,seeds}
\label{nas}
We are now ready to describe sets of nodes from non-acyclic $Y$\!-\,seeds. 

We draw the $Y$\!-\,seeds schematically in the following way. Consider the convex hull $\T$ of the nodes in a non-acyclic $Y$\!-\,seed. $\T$ is a finite tree (we count as vertices both nodes and vertices of $\G$) with $n-1$ leaves (all of them are nodes) containing one more node (of valence two) lying in the chosen seed (the separating node). All nodes have a color, green or red. Also, there is a distinguished vertex of $\T$ which is the closest to the vertex $e\in \G$, we draw the path from $e$ to the tree as a dotted segment. 

\begin{figure}[!h]
\begin{center}
\psfrag{1}{\tiny $1$}
\psfrag{2}{\tiny $2$}
\psfrag{n}{\tiny $n$}
\psfrag{m}{\tiny $m$}
\psfrag{m+1}{\tiny $m\!\!+\!\!1$}
\psfrag{m-1}{\tiny $m\!\!-\!\!1$}
\psfrag{q}{\tiny $q$}
\psfrag{q+1}{\tiny $q\!\!+\!\!1$}
\epsfig{file=./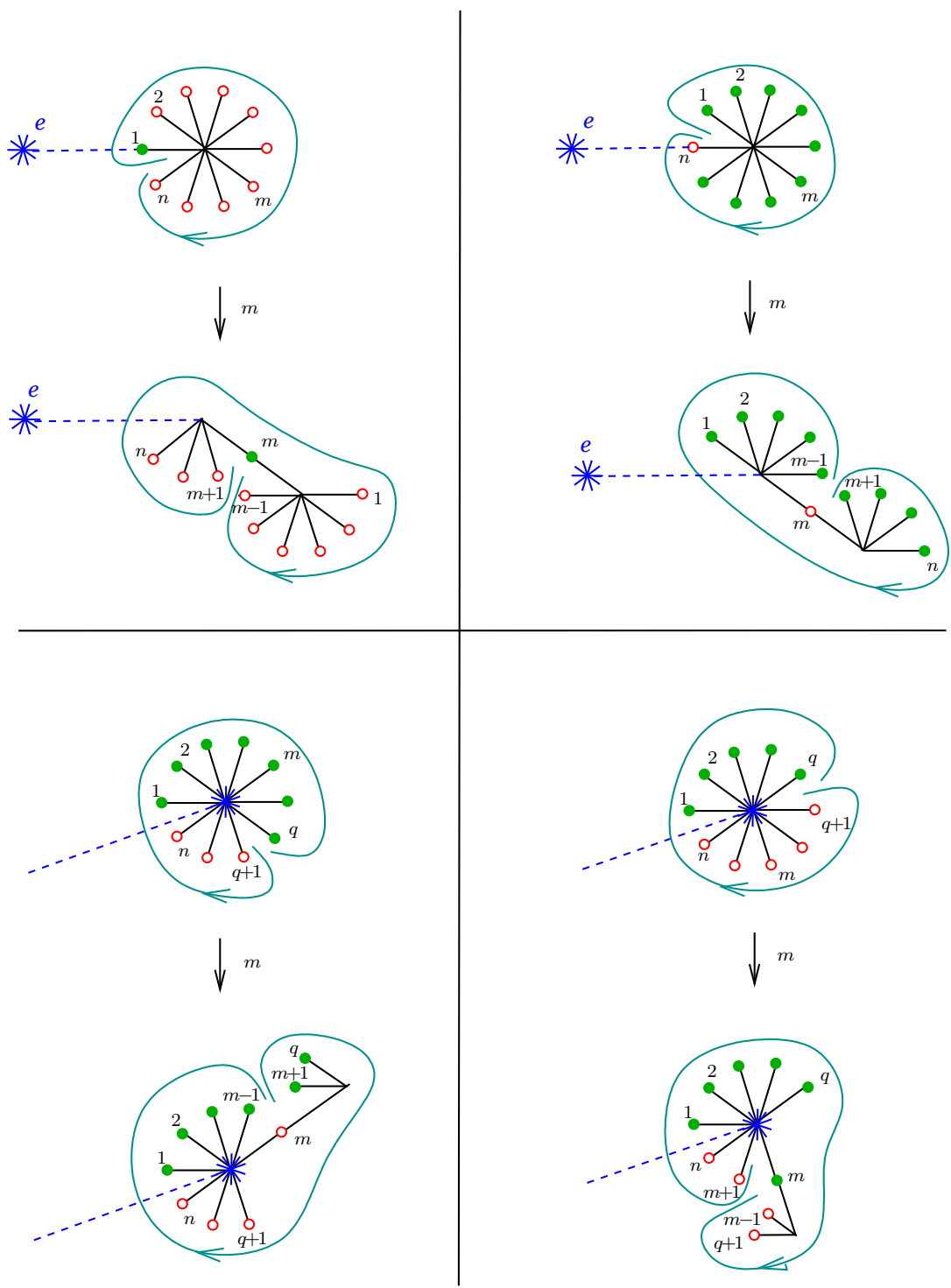,width=0.9\linewidth}
\caption{Base of the induction.}
\label{base}
\end{center}
\end{figure}

The $Y$\!-\,seeds that can be obtained by one mutation from an acyclic $Y$\!-\,seed are shown in Fig.~\ref{base}. According to Section~\ref{as}, there are exactly three types of acyclic $Y$\!-\,seeds, for all of them possible mutations are shown in Fig.~\ref{base} depending on the mutated vertex $m$. Two of the types are shown in the top row, here $m$ can be any vertex except for $1$ and $n$ (which are source and sink respectively). The third type of an acyclic $Y$\!-\,seed is shown in the bottom row. Here $m$ can be any vertex except for $q$ and $q+1$ (which are source and sink respectively), the left and right pictures correspond to the different possible colors of $m$ (i.e., green on the left and red on the right). 

\begin{figure}[!h]
\begin{center}
\psfrag{e}{\scriptsize \color{blue} $e$}
\psfrag{1}{\tiny $1$}
\psfrag{2}{\tiny $2$}
\psfrag{n}{\tiny $n$}
\psfrag{k}{\tiny $k$}
\psfrag{k+1}{\tiny $k\!\!+\!\!1$}
\psfrag{k-1}{\tiny $k\!\!-\!\!1$}
\psfrag{m}{\tiny $m$}
\psfrag{m1}{\tiny $1\le m\le q$}
\psfrag{m2}{\tiny $q+1\le m\le k-1$}
\psfrag{m3}{\tiny $k+1\le m\le n$}
\psfrag{m+1}{\tiny $m\!\!+\!\!1$}
\psfrag{m-1}{\tiny $m\!\!-\!\!1$}
\psfrag{q}{\tiny $q$}
\psfrag{q+1}{\tiny $q\!\!+\!\!1$}
\epsfig{file=./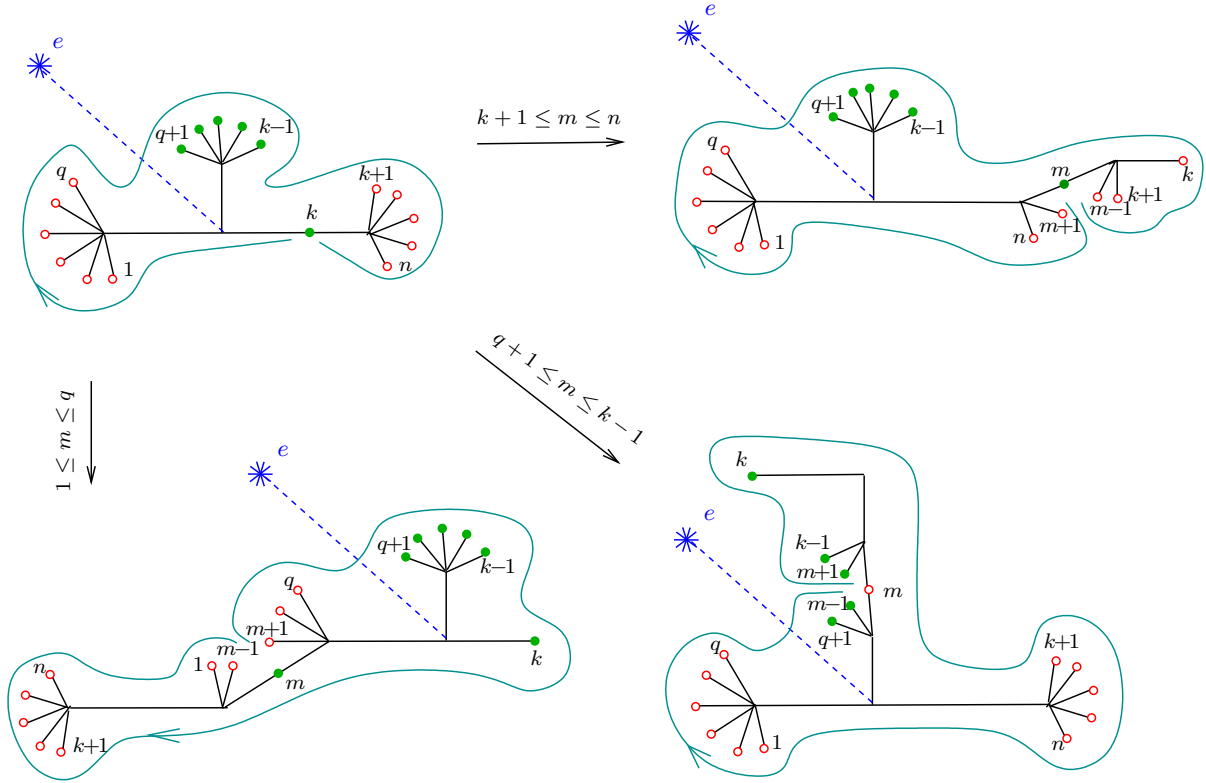,width=0.999\linewidth}
\caption{Inductive step 1: the case of positive separating node. Three mutations $\mu_m$ are shown, depending on the group to which the vertex $m$ belongs. 
 }
\label{ind1}
\end{center}
\end{figure}

\begin{figure}[!h]
\begin{center}
\psfrag{e}{\scriptsize \color{blue} $e$}
\psfrag{1}{\tiny $1$}
\psfrag{2}{\tiny $2$}
\psfrag{n}{\tiny $n$}
\psfrag{k}{\tiny $k$}
\psfrag{k+1}{\tiny $k\!\!+\!\!1$}
\psfrag{k-1}{\tiny $k\!\!-\!\!1$}
\psfrag{m}{\tiny $m$}
\psfrag{m1}{\tiny $1\le m\le q$}
\psfrag{m2}{\tiny $q+1\le m\le k-1$}
\psfrag{m3}{\tiny $k+1\le m\le n$}
\psfrag{m+1}{\tiny $m\!\!+\!\!1$}
\psfrag{m-1}{\tiny $m\!\!-\!\!1$}
\psfrag{q}{\tiny $q$}
\psfrag{q+1}{\tiny $q\!\!+\!\!1$}
\epsfig{file=./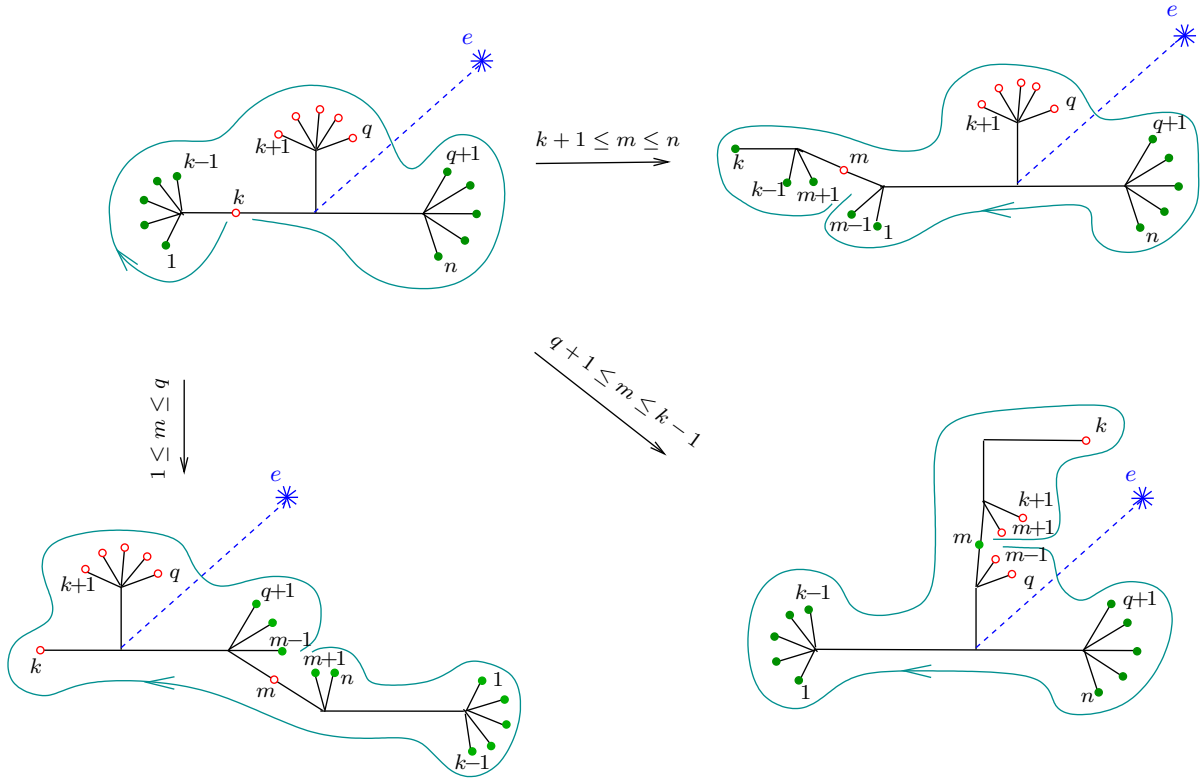,width=0.995\linewidth}
\caption{Inductive step 2: the case of negative separating node. Three mutations $\mu_m$ are shown, depending on the group to which the vertex $m$ belongs. 
}
\label{ind2}
\end{center}
\end{figure}

The general form of a non-acyclic $Y$\!-\,seed is shown in the top left diagrams in Figures~\ref{ind1} and~\ref{ind2}. These diagrams are obtained from the tree $\T$ by collecting together consequtive non-separating nodes of the same color. For example, the top left diagram in Fig.~\ref{ind1} means the following:
\begin{enumerate}
\item
Clockwise order on $\T$ with the first node marked $1$ and the last node marked $n$ (shown by the long curved arrow around the tree) coincides with the natural order.
\item
In the ordered $Y$\!-\,seed, there is at most three consequtive groups of nodes of the same color, with $q$ red nodes coming first, $k-q-1$ green nodes coming next, then a green separating node, and finally $n-k$ red nodes. Note that some of the groups may be empty (i.e., any one or even two of the three numbers above may vanish).
\item
The product of the reflections in a clockwise order starting from the first positive root (i.e., the first green node w.r.t. the natural order) is equal to a Coxeter element $s_1\dots s_n$.    
\end{enumerate} 

\begin{figure}[!h]
\begin{center}
\psfrag{e}{\scriptsize \color{blue} $e$}
\psfrag{1}{\tiny $1$}
\psfrag{2}{\tiny $2$}
\psfrag{n}{\tiny $n$}
\psfrag{k}{\tiny $k$}
\psfrag{k+1}{\tiny $k\!\!+\!\!1$}
\psfrag{k-1}{\tiny $k\!\!-\!\!1$}
\psfrag{m}{\tiny $m$}
\psfrag{m1}{\tiny $1\le m\le q$}
\psfrag{m2}{\tiny $q+1\le m\le k-1$}
\psfrag{m3}{\tiny $k+1\le m\le n$}
\psfrag{m+1}{\tiny $m\!\!+\!\!1$}
\psfrag{m-1}{\tiny $m\!\!-\!\!1$}
\psfrag{q}{\tiny $q$}
\psfrag{q+1}{\tiny $q\!\!+\!\!1$}
\psfrag{=}{\huge $=$}
\epsfig{file=./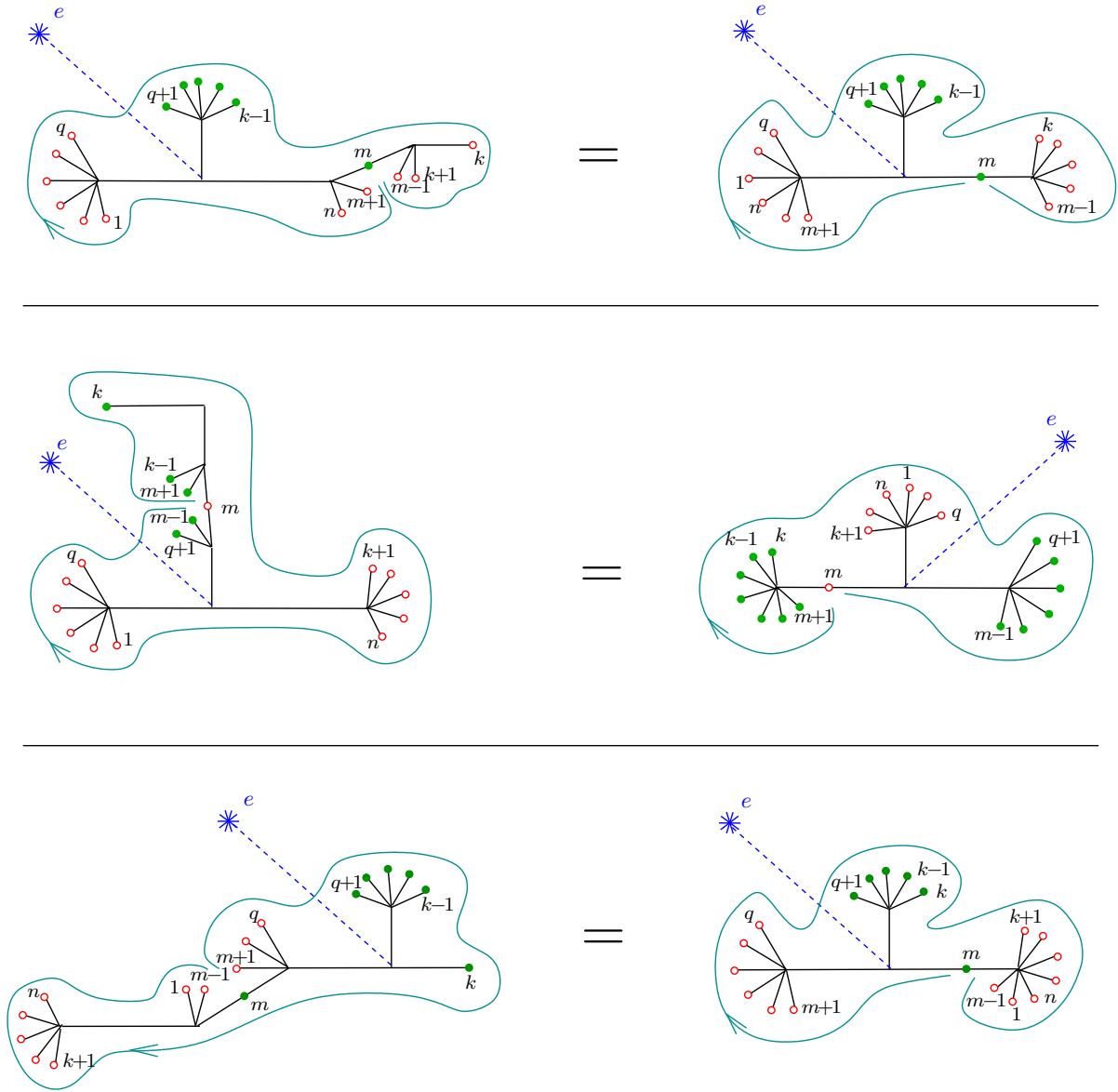,width=0.995\linewidth}
\caption{Equivalent trees}
\label{eq}
\end{center}
\end{figure}

To verify the fact that there are only two types of $Y$\!-\,seeds we use a straightforward induction by the number of mutations required to obtain the $Y$\!-\,seed. All the $Y$\!-\,seeds obtained by one mutation of an acyclic $Y$\!-\,seed are of these types, see Fig.~\ref{base}, this is the base of the induction. Then Figures~\ref{ind1} and~\ref{ind2} contain all possible mutations depending on the color of the mutated node (note that the mutation in the separating node is a decreasing one, so we do not need to consider it), and Figure~\ref{eq} shows that the obtained $Y$\!-\,seeds are also of one of the same two types (modulo appropriate renumbering of the nodes). The fact that the clockwise product of reflections (starting from the first node after the ray from $e$) is the required Coxeter element also follows by induction.

Note that this provides another (purely combinatorial) proof of the ``only if'' part of Theorem~\ref{ST}.

\section{$\cc$-vectors as arcs in a disc}
\label{sec-disc}
Let $D$ be a disc with a set of $n$ interior marked points $P=\{p_1, \dots,p_n\}$ and one boundary marked point $b$ called a {\it basepoint}. An {\it arc} on $D$ is a non-self-intersecting path in the interior of $D\setminus P$ with endpoints in $b$ and one of $p_i$. The aim of this section is to assign an arc to every real Schur root in $\Delta$, and a collection of non-intersecting arcs to every $Y$\!-\,seed.

\subsection{Universal Coxeter group and orbifold}

Let $\H$ be the hyperbolic plane (for example, we can take the Poincar\'e disc model). Consider a regular ideal $n$-gon $\P$ in $\H$ with sides $\{l_i\}$ indexed in a clockwise order, and choose a Euclidean ``midpoint'' $\p_i$ on every side $l_i$ of $\P$. Now consider the group $\Gamma$ generated by $n$ rotations $\rho_i$ by $\pi$ in points $\p_1,\dots,\p_n$. Applying the Poincar\'e's Fundamental Polyhedron Theorem (see e.g.~\cite{M}), we see that the group $\Gamma$ is a universal Coxeter group $(\Z/2\Z)^{*n}$ (with a non-standard representation: the generators $\rho_i$ act on $\H$ as rotations), and $\P$ is its fundamental domain. The hyperbolic plane $\H$ is tessellated by copies of $\P$, all the copies are indexed by the elements of $\Gamma$.

The Cayley graph of $\Gamma$ is an $n$-regular tree dual to the tessellation. Now observe that the Cayley graph is precisely the graph $\G$ described in Section~\ref{seeds} with an embedding in $\H$ (and thus in $\R^2$) described in Section~\ref{sec-nodes}. This corresponds to a natural isomorphism between groups $G$ and $\Gamma$ taking the generating reflections $s_i$ to the rotations $\rho_i$ around $\p_i$. From now on, we will identify groups $G$ and $\Gamma$ (and $s_i$ with $\rho_i$).

To every reflection $r$ in $G$ we assign two classes $[\bm\gamma'_r]$ and $[\hat{\bm\gamma}'_r]$ of non-self-intersecting paths on $\H$ in the following way. A path $\hat\gamma'_r\in [\hat{\bm\gamma}'_r]$ is any path connecting the center $O'$ of the $n$-gon $\P$ with the center $r(O')$ of its copy $r(\P)$ corresponding to the element $r\in\G$, avoiding the images of points $\p_i$ under the action of $G$.  A path $\gamma'_r\in [\bm\gamma'_r]$ is any non-self-intersecting path connecting the center of the $n$-gon $\P$ with the node of $\G$ corresponding to $r$, also avoiding the images of points $\p_i$. 

Now consider the quotient $\O$ of $\H$ by the action of the group $G$, denote by $\pi$ the canonical projection. The space $\O$ is a hyperbolic orbifold homeomorphic to a sphere with one cusp and $n$ orbifold points $\{\pi(\p_i)\}$ of order two. All the nodes of $\G$ project to orbifold points, where every orbifold point corresponds to precisely one conjugacy class of reflections. The path $\pi(\gamma'_r)$ connects $\pi(O')$ with one of the orbifold points, while the path $\pi(\hat\gamma'_r)$ is a loop from $\pi(O')$ going around $\pi(\gamma'_r)$. 

The following lemma easily follows from the construction above.

\begin{lemma}
\label{distinct}
\begin{enumerate}
\item
Given a reflection $r\in G$, a path $\pi(\gamma'_{r})\subset\O$ is defined uniquely up to homotopy. 
\item
Two paths $\pi(\gamma'_{r_1})$ and $\pi(\gamma'_{r_2})$ with $r_1\ne r_2$ are not homotopic to each other.
\end{enumerate}
\end{lemma}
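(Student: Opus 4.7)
The plan is to exploit the fact that $\pi \colon \H \to \O$ realizes $\H$ as the orbifold universal cover of $\O$: since $\H$ is simply connected and contains no orbifold points, orbifold homotopy classes of paths in $\O$ starting at $\pi(O')$ are parametrized by the endpoints of their lifts starting at $O'$. Both parts of the lemma fall out once this correspondence is made precise.

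For part (1), I would first observe that the node of $\G$ associated to a reflection $r = w s_i w^{-1}$ is a well-defined point $w\p_i \in \H$, since the two representatives $w$ and $ws_i$ of the relevant coset satisfy $w\p_i = (ws_i)\p_i$ (as $s_i$ fixes $\p_i$). Any two admissible paths $\gamma'_r$ and $\tilde\gamma'_r$ thus share the endpoints $O'$ and $w\p_i$ in the simply connected space $\H$, so they are homotopic rel endpoints in $\H$; composing with $\pi$ yields the required homotopy of projected paths in $\O$. The non-self-intersection and midpoint-avoidance conditions need not be preserved through the homotopy, since only the homotopy class in $\O$ is being asserted.

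For part (2), I would argue by contradiction: suppose $\pi(\gamma'_{r_1}) \simeq \pi(\gamma'_{r_2})$ rel endpoints in $\O$, where $r_k = w_k s_{i_k} w_k^{-1}$ but $r_1 \ne r_2$. First, equal endpoints in $\O$ force $\pi(\p_{i_1}) = \pi(\p_{i_2})$, hence some $g \in G$ satisfies $g\p_{i_1} = \p_{i_2}$; then $gs_{i_1}g^{-1}$ fixes $\p_{i_2}$, so $gs_{i_1}g^{-1} = s_{i_2}$. Passing to the abelianization $G \to (\Z/2)^n$ separates the distinct generators, so this is only possible when $i_1 = i_2 =: i$. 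Second, lift the orbifold homotopy $H \colon [0,1]^2 \to \O$ to $\tilde H \colon [0,1]^2 \to \H$ starting at $O'$, using that $\H$ is the orbifold universal cover. The top edge $s \mapsto \tilde H(s,1)$ is a continuous map into the discrete fiber $G \cdot \p_i$ and is therefore constant, giving $w_1\p_i = w_2\p_i$. Hence $w_2^{-1}w_1 \in \mathrm{Stab}_G(\p_i) = \langle s_i\rangle$, so $w_1 \in \{w_2, w_2 s_i\}$; in either case $r_1 = w_1 s_i w_1^{-1} = w_2 s_i w_2^{-1} = r_2$, contradicting the hypothesis.

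The main technical point is the lifting of the orbifold homotopy past the branch points in part (2): because $\pi$ is ramified over the $\pi(\p_i)$, the plain covering-space lifting lemma does not directly apply. This can be handled either by a direct appeal to orbifold covering theory, or more concretely by truncating each path slightly before the orbifold endpoint, lifting the resulting homotopy to the ordinary covering $\H \setminus G \cdot \{\p_1,\ldots,\p_n\} \to \O \setminus \{\pi(\p_1),\ldots,\pi(\p_n)\}$, and passing to the limit. Beyond this technicality, the argument reduces entirely to the simple connectivity of $\H$ together with the elementary fact that distinct generators of the universal Coxeter group $G = (\Z/2)^{*n}$ are pairwise non-conjugate.
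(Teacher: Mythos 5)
Your argument is correct and rests on the same underlying mechanism as the paper's --- namely that $\H\to\O$ is the orbifold universal cover, so classes of arcs are read off from endpoints of lifts --- but the execution differs in both parts. For (1) the paper argues combinatorially: up to homotopy in $\H\setminus G(\{\p_i\})$ a path $\gamma'_r$ is recorded by which half of each crossed side $g(l_j)$ it passes through, and the two halves are identified in $\O$; you instead invoke simple connectivity of $\H$ and push the homotopy down. For (2) the paper replaces the arcs by the encircling loops $\hat\gamma'_r$, which avoid the orbifold points altogether and represent the distinct elements $r_1\ne r_2$ of the fundamental group of $\O$; this sidesteps entirely the branched-lifting issue that you have to confront head-on with the truncation/limit argument (which does work: the lifted homotopy maps a connected set into a disjoint union of small balls about the discrete fiber $G\p_i$, hence lands in one of them). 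Your stabilizer computation and the non-conjugacy of distinct generators via abelianization are both fine and match the paper's earlier remarks. One point deserves more care than you give it, and it affects part (1) rather than part (2): the homotopy in $\H$ furnished by simple connectivity will in general sweep across points of $G(\{\p_i\})$, so its projection is only an \emph{orbifold} homotopy --- and indeed the stronger statement, uniqueness up to homotopy in $\O$ minus the orbifold points, is false: two choices of $\gamma'_r$ crossing a side $g(l_j)$ on opposite sides of $g\p_j$ project to arcs differing by the square of a small loop about $\pi(\p_j)$, which is trivial in $\pi_1^{orb}(\O)\cong G$ but not in the fundamental group of the complement. So the notion of homotopy in the lemma must be the orbifold one (this is also what the paper's own proof delivers); with that understanding your proof is complete, and your closing remark about orbifold covering theory is exactly the right disclaimer, except that it should be attached to part (1) as well as part (2).
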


\begin{proof}
Up to homotopy in $\H\setminus G(\{\p_i\})$, a path $\gamma'_r$ (or $\hat\gamma'_r$) is completely defined by the binary choices of the ``sides'' of intersection with the images $G(\{l_i\})$ of all the sides of the polygon $\P$: a path can intersect a side either on the left of the image of the corresponding $\p_i$ or on the right. However, these two halves are identified in $\O$, which proves the first assertion.

The second assertion is straightforward: the loops $\pi(\hat\gamma'_{r_1})$ and  $\pi(\hat\gamma'_{r_2})$ define distinct elements of the fundamental group of $\O$. 

\end{proof}

For simplicity, we will denote  $\pi(\gamma'_r)$ and  $\pi(\hat\gamma'_r)$  by  $\gamma_r$ and  $\hat\gamma_r$ respectively, and $\pi(O')$ by $O$.  

We can assume that every path $\gamma_r$ has minimal number of self-intersections within its homotopy class. For the same purpose, we can also assume that a loop $\hat\gamma_r$ is a small deformation of $\gamma^{-1}_r\gamma_r$, i.e. $\hat\gamma_r$ bounds a small neighborhood of $\gamma_r$ in $\O$. This defines $\gamma_r$ and $\hat\gamma_r$ up to isotopy.

A priori, it is not clear whether paths $\gamma_r$ are self-intersecting or not. We will find this out in the next section. 

\subsection{From orbifold to disc}

Given a geodesic on $\O$ connecting a point $\p\in\O$ and the cusp, we are not able to compute its length as it is infinite. However, we can {\it compare} lengths of two geodesics from $\p$ to the cusp in the following way. Choose a horocycle around the cusp bounding a small horoball not containing any of the points $\p_i$. Then we can measure the length of a geodesic between $\p$ and the horocycle, and compare these measurements for different geodesics. The order we get is independent of the choice of horocycle: changing a horocycle adds a constant (equal to the distance between the horocycles) to all the lengths.      

\begin{lemma}
\label{seed->arcs}
Let $S^t$ be an ordered $Y$\!-\,seed, and let $(r_1,\dots,r_n)$ be the corresponding $n$-tuple of reflections. Then it is possible to choose the paths $\gamma_{r_i}$ in their isotopy classes satisfying the following properties.
\begin{enumerate}
\item
All paths $\gamma_{r_i}$ neither are self-intersecting nor intersect each other except for the common endpoint $O$.  
\item
The paths $\gamma_{r_i}$ emanate from $O$ in a clockwise order.
\item
Loops $\hat\gamma_{r_i}$ and their inverses generate the fundamental group of $\O$.
\item
Denote by $\ell$ the shortest geodesic between the cusp and $O$ amongst those emanating from $O$ between $\gamma_{s_n}$ and $\gamma_{s_1}$. Then no of $\gamma_{r_i}$ intersects $\ell$. 
\item
The product of reflections $r_i$ in clockwise order starting from the first one after $\ell$ is equal to a Coxeter element $s_1\dots s_n$.
\end{enumerate}
\end{lemma}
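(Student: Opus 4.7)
The plan is induction on the length of a mutation sequence from the initial $Y$\!-\,seed to $S^t$. For the initial seed $(s_1,\dots,s_n)$ I take $\gamma_{s_i}$ to be the projection of the Euclidean segment from $O'$ to $\p_i$ inside the fundamental polygon $\P$. The regularity of $\P$ and the clockwise labelling of its sides immediately give pairwise non-intersecting paths in clockwise order at $O$; the loops $\hat\gamma_{s_i}$ are the standard generators of the fundamental group of $\O$; and the distinguished geodesic $\ell$ is the image of the ray from $O'$ going through the ideal vertex of $\P$ between $l_n$ and $l_1$. A general acyclic $Y$\!-\,seed differs from the initial one only by sink/source mutations, which conjugate every generator by a fixed element of $G$; this moves the whole picture rigidly into a neighbouring fundamental $n$-star of $\G$, and (1)--(5) transport along.

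For the inductive step, assume $S^t$ comes equipped with paths satisfying (1)--(5), and let $S^{t'}=\mu_k(S^t)$. By the formula in Section \ref{s-refl}, only those $r_j$ with the relevant arrow between $k$ and $j$ are modified, via $r_j\mapsto r_k r_j r_k$; $r_k$ itself is negated and the remaining reflections are unchanged. I construct the new representative of $\gamma_{r_kr_jr_k}$ by a Dehn-twist-type local modification: homotope $\gamma_{r_j}$ inside a thin tubular neighbourhood of $\gamma_{r_k}$ so that it runs alongside $\gamma_{r_k}$ to the orbifold point $\pi(\p_{i_k})$ which is the endpoint of $\gamma_{r_k}$, performs a small loop around $\pi(\p_{i_k})$ in the class of $\hat\gamma_{r_k}$, and returns alongside $\gamma_{r_k}$ to rejoin $\gamma_{r_j}$. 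By the inductive hypothesis $\gamma_{r_k}$ is disjoint from every other $\gamma_{r_i}$, so the tubular neighbourhood can be chosen thin enough that no new crossings with unchanged paths arise. When several $\gamma_{r_j}$ are twisted simultaneously, I would nest their tubular neighbourhoods in concentric layers ordered by the clockwise position of the $\gamma_{r_j}$ at $O$, which keeps the twisted paths pairwise disjoint.

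Property (2) is then read off from the clockwise order of endpoints at $O$ visible in Figures \ref{ind1}--\ref{ind2}; property (3) holds because the new generating set is obtained from the old one by a Nielsen transformation (one generator conjugated by another), and such transformations preserve generation of $\pi_1(\O)\cong G$. For (4), the relevant angular sector at $O$ containing $\ell$ is the one marked by the dotted segment from $e$ in the figures; since the local modifications above are supported in thin neighbourhoods of old paths each of which was disjoint from $\ell$ by induction, no new crossing with $\ell$ is created (and if the decreasing direction shifts the sector, $\ell$ is replaced by the geodesic in the new sector, as prescribed by Figure \ref{eq}). For (5), the identity that the clockwise product of reflections starting from the first green node equals the Coxeter element $s_1\dots s_n$ has already been checked on $\G$ in Section \ref{seeds} and is preserved under the identification of $G$ with the orbifold fundamental group of $\O$, since the angular clockwise order at $O$ corresponds to the clockwise order in the fundamental $n$-star.

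The main obstacle I expect is the simultaneous update of several paths at a single mutation, i.e.\ verifying that the nested twists around $\pi(\p_{i_k})$ really produce paths in the correct clockwise order and with no mutual crossings. The combinatorial outcome is precisely what Figures \ref{ind1} and \ref{ind2} dictate, and the geometric construction must match this: I would formalise the argument by performing the twists starting from the $\gamma_{r_j}$ closest to $\gamma_{r_k}$ in the clockwise sense and moving outward, using strictly nested collars, and then invoking Figure \ref{eq} to reconcile the resulting arrangement with the standard-form diagram of a $Y$\!-\,seed.
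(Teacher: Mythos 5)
Your proposal is correct and follows essentially the same route as the paper: induction on the length of the mutation sequence, with the base case given by the geodesics from $O$ to the $\p_i$ and the inductive step realizing the partial conjugation $r_j\mapsto r_kr_jr_k$ (read off from Figures~\ref{ac-fig}--\ref{ind2}) as conjugation of the loops $\hat\gamma_{r_i}$ by $\hat\gamma_{r_k}$, which preserves disjointness and the clockwise order. The only difference is cosmetic: the paper phrases the disjointness argument in terms of the loops $\hat\gamma_{r_i}$ and a figure, whereas you spell out the tubular-neighbourhood/nested-collar realization of the same conjugation.
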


\begin{proof}
First, note that the first two assertions of the lemma are equivalent to ones with all $\gamma_{r_i}$ substituted by $\hat\gamma_{r_i}$: every $\hat\gamma_{r_i}$ is a loop around $\gamma_{r_i}$, so they have precisely the same intersection properties. 

The proof is by induction on the number of mutations from the initial $Y$\!-\,seed $S$ to $S^t$. The initial $Y$\!-\,seed (given by the simple roots and the generating reflections $s_1,\dots,s_n$ on the tree) corresponds to a collection of paths on $\O$ satisfying all the assertions of the lemma: these are just geodesics from $O$ to $\p_i$.


Now suppose that a $Y$\!-\,seed $S^t$ is represented by a collection of non-intersecting paths on $\O$, and let $\mu_m(S^t)$ be a $Y$\!-\,seed obtained from $S^t$ by a mutation $\mu_m$.
Looking at all possible mutations listed in Figures~\ref{ac-fig}--\ref{ind2},  we see that each mutation corresponds to a transformation of the roots which can be described in the following way: several consequtive roots $\cc_{l_1},\dots,\cc_{m-1}$ or $\cc_{m+1},\dots,\cc_{l_2}$ are reflected with respect to $\cc_m$. In terms of reflections in $G$, this means that the corresponding consequtive reflections are conjugated by $r_m$. Now, in terms of the loops $\hat\gamma_{r_i}$, this means that the corresponding loops are also conjugated by $\hat\gamma_{r_m}$ (in particular, the third assertion follows immediately).

This transformation takes a non-intersecting collection of loops to a non-intersecting one (see Fig.~\ref{mut} for an example), which proves the first assertion. The second and the forth assertions follow from the explicit form of mutations shown in Figures~\ref{ac-fig}--\ref{ind2} (the geodesic $\ell$ corresponds to a continuation of the dashed ray in the figures). The last assertion now follows from the second one.  

\end{proof}

\begin{figure}[!h]
\begin{center}
\psfrag{m}{\small $\mu_3$}
\psfrag{l}{\small $\ell$}
\psfrag{1}{\scriptsize $\hat\gamma_{r_1} $}
\psfrag{2}{\scriptsize $\hat\gamma_{r_2} $}
\psfrag{3}{\scriptsize $\hat\gamma_{r_3} $}
\psfrag{4}{\scriptsize $\hat\gamma_{r_4} $}
\psfrag{1'}{\scriptsize $\hat\gamma_{r_1}' $}
\psfrag{2'}{\scriptsize $\hat\gamma_{r_2}' $}
\psfrag{3'}{\scriptsize $\hat\gamma_{r_3}' $}
\psfrag{4'}{\scriptsize $\hat\gamma_{r_4}' $}
\epsfig{file=./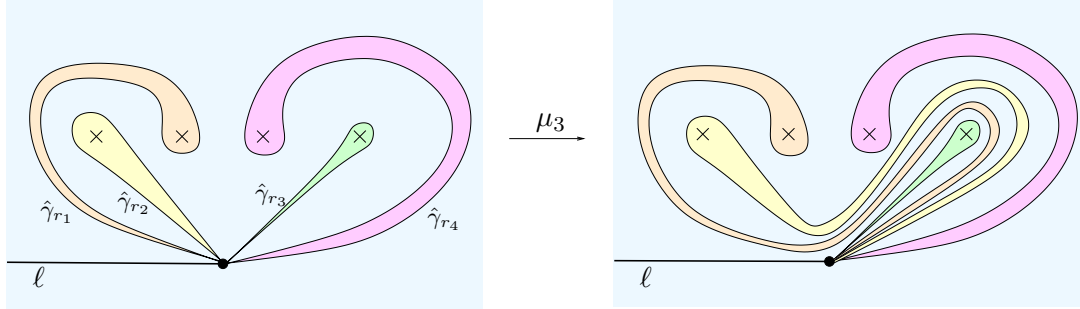,width=0.9\linewidth}
\caption{Mutation as a conjugation of loops}
\label{mut}
\end{center}
\end{figure}

Now, let us cut the orbifold $\O$ along $\ell$ and forget about the hyperbolic structure of $\O$. This results in a disc with $n$ interior marked points and two marked points at the boundary ($O$ and the cusp on $\O$). We can identify this with the disc $D$ defined in the beginning of Section~\ref{sec-disc}, where the basepoint $b=O$ and $p_i=\p_i$. We call the second (cuspidal) boundary marked point $\infty$.  

In view of the results of~\cite{N,NCh}, the following theorem is a straightforward corollary of Lemmas~\ref{distinct} and~\ref{seed->arcs}.

\begin{theorem}
\label{disc}
\begin{enumerate}
\item
Every real Schur root defines an arc in $D$. The arc is defined uniquely up to isotopy, distinct roots define distinct arcs.
\item
Every $Y$\!-\,seed defines an $n$-tuple of non-intersecting arcs. 
\item
The product of corresponding reflections in clockwise order is equal to a Coxeter element $s_1\dots s_n$.
\end{enumerate}
\end{theorem}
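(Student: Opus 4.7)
My plan is to derive the theorem directly from Lemmas~\ref{distinct} and~\ref{seed->arcs}, combined with the identification of real Schur roots with positive $\cc$-vectors of $Y$\!-\,seeds established in \cite{N,NCh}.

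First I would dispatch part (2). Given an ordered $Y$\!-\,seed $S^t$ with associated reflections $(r_1,\dots,r_n)$, Lemma~\ref{seed->arcs} furnishes paths $\gamma_{r_i}$ on the orbifold $\O$ which are pairwise non-intersecting (except at the common endpoint $O$), emanate from $O$ in clockwise order, and avoid the distinguished geodesic $\ell$. Cutting $\O$ along $\ell$ and identifying the result with the disc $D$ (so that $b=O$, $p_i=\p_i$ and the cusp becomes the boundary marked point $\infty$) converts each $\gamma_{r_i}$ into an arc in $D$ and preserves the non-intersection property, yielding the required $n$-tuple. Part (3) is then a direct translation of Lemma~\ref{seed->arcs}(5).

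For part (1), let $\alpha$ be a real Schur root. By \cite{N,NCh}, $\alpha$ occurs as a positive $\cc$-vector in some $Y$\!-\,seed $S^t$ and so corresponds to a reflection $r_\alpha \in G$; applying part (2) to $S^t$ and extracting the arc for $r_\alpha$ produces an arc $\gamma_\alpha$ in $D$. To see that $\gamma_\alpha$ is well-defined, observe that $\ell$ is canonical (it depends only on the fixed paths $\gamma_{s_n}$ and $\gamma_{s_1}$ on $\O$), and by Lemma~\ref{distinct}(1) the path $\gamma_{r_\alpha}$ on $\O$ is determined up to homotopy by $r_\alpha$ alone; cutting along $\ell$ transports this homotopy class to a well-defined isotopy class in $D$. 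Distinctness for $\alpha \ne \alpha'$ follows from Lemma~\ref{distinct}(2): the reflections $r_\alpha \ne r_{\alpha'}$ give non-homotopic paths on $\O$, hence non-isotopic arcs in $D$.

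The only genuinely non-formal point I expect to need care with is the well-definedness in (1): a priori the arc extracted from $S^t$ might depend on which $Y$\!-\,seed containing $\alpha$ we chose. The argument above reduces this entirely to Lemma~\ref{distinct}(1), since that lemma makes $\gamma_{r_\alpha}$ depend only on $r_\alpha$, and the cut $\ell$ is fixed once and for all. A secondary bookkeeping check is that isotopy of arcs in $D$ (with fixed endpoints at $b$ and at a puncture) corresponds bijectively to homotopy in $\O$ of paths avoiding $\ell$; this is standard, since a regular neighborhood of $\ell$ in $\O$ is a strip and gluing along $\ell$ inverts the cutting. With these points in hand, the proof is no more than the formal combination of the two lemmas with \cite{N,NCh}.
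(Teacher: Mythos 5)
Your proposal is correct and follows exactly the paper's route: the authors likewise state Theorem~\ref{disc} as a straightforward corollary of Lemmas~\ref{distinct} and~\ref{seed->arcs} together with the identification of real Schur roots with positive $\cc$-vectors from~\cite{N,NCh}. Your extra care about well-definedness of the arc (via Lemma~\ref{distinct}(1) and the fixed cut along $\ell$) just makes explicit what the paper leaves implicit.
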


\section{Arcs in a disc as $\cc$-vectors}
\label{converse}
In Theorem~\ref{disc}, we showed that every real Schur root defines an arc in $D$, and every $Y$\!-\,seed defines an $n$-tuple of non-intersecting arcs. The aim of this section is to understand additional properties of such $n$-tuples, and to show that every arc belongs to at least one of them. This will imply that there is a bijection between real Schur roots and arcs in $D$.

\subsection{A reflection from an arc}
\label{r->arc}

We will draw the disc as an upper half-plane $\{z\in \CC \ | \ \Im\, z>0 \}$, with the points $p_i$ placed from left to right on the horizontal line $\Im\, z=1$ and the point $\infty$ at infinity.


Given any arc in $D$ we can construct a reflection in $G$ as follows (cf.~\cite[Section 3]{Bes}).
Denote by $\ell_i$ a vertical ray $\ell_i=\{z\ |\ \Re\, z=\Re\, p_i, \Im\, z>\Im\, p_i\}$. Let $\gamma$ be an arc ending at $p_k$. Going along $\gamma$ from $b$ to $p_k$, we list all the indices $i_1,i_2,\dots,i_l$ of rays  $\ell_i$ intersected by $\gamma$. Then we assign to $\gamma$ a reflection $r=s_{i_1}\dots s_{i_l}\,s_{k}\,s_{i_l}\dots s_{i_1}$, see Fig.~\ref{arc-ex} for an example. We can assume that the word for $r$ we obtained is reduced, i.e. no two consequtive letters are the same (this corresponds to $\gamma$ not intersecting the same $\ell_i$ twice in a row which can be easily achieved by an isotopy -- here we use the fact $\gamma$ does not have self-intersections).

\begin{figure}[!h]
\begin{center}
\psfrag{g}{\small \color{OliveGreen} $\gamma$}
\psfrag{1}{$\gamma=\gamma_r$, $r=s_2s_3s_2$}
\psfrag{2}{$\gamma=\gamma_r$, $r=s_3s_1s_2s_3\, s_4\, s_3s_2s_1s_3$}
\psfrag{l1}{{\small $\ell_1$}}
\psfrag{l2}{\small {$\ell_2$}}
\psfrag{l3}{\small {$\ell_3$}}
\psfrag{l4}{\small {$\ell_4$}}
\epsfig{file=./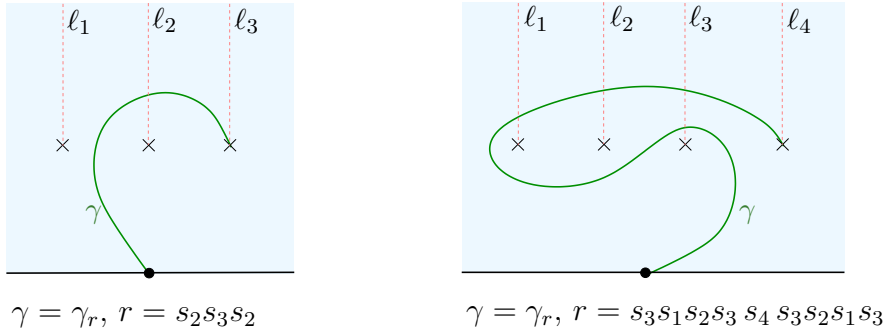,width=0.7\linewidth}
\caption{Reflection assigned to an arc: examples}
\label{arc-ex}
\end{center}
\end{figure}

\begin{lemma}
\label{arc->root} 
Let $r\in G$ be a reflection constructed by an arc $\gamma$. Then $\gamma=\gamma_r$.
\end{lemma}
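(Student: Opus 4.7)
The plan is to lift $\gamma$ to the universal cover $\H$ and track the sequence of fundamental polygons it traverses, thereby reading off the element of $G$ whose action sends $\p_k$ to the endpoint of the lift. Under the identification of $D$ with the fundamental polygon $\P$ cut along $\ell$, each ray $\ell_i\subset D$ corresponds to (a half of) the side $l_i$ of $\P$, and each marked point $p_i$ corresponds to the midpoint $\p_i \in l_i$.

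First I would lift $\gamma$ to a path $\tilde\gamma$ in $\H$ starting at $O' \in \P$ and prove by induction on $j$ that after $\gamma$ has crossed $\ell_{i_1}, \dots, \ell_{i_j}$, the lift $\tilde\gamma$ has entered the polygon $w_j(\P)$, where $w_j := s_{i_1} s_{i_2} \cdots s_{i_j}$. For the inductive step, if $\tilde\gamma$ is currently in $w_{j-1}(\P)$, then the next crossing lifts to the edge $w_{j-1}(l_{i_j})$ of this polygon; since the rotation through its midpoint $w_{j-1}(\p_{i_j})$ equals $w_{j-1} s_{i_j} w_{j-1}^{-1}$, the adjacent polygon across this edge is $(w_{j-1} s_{i_j} w_{j-1}^{-1}) \cdot w_{j-1}(\P) = w_j(\P)$.

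Once $\tilde\gamma$ reaches the final polygon $w_l(\P)$, its endpoint must be $w_l(\p_k)$, as this is the unique lift of $\p_k$ on the boundary of $w_l(\P)$. By the identification of nodes of $\G$ with midpoints of edges of the Cayley graph explained in Section~\ref{sec-nodes}, this point is precisely the node associated with the reflection $w_l s_k w_l^{-1} = r$. Thus $\tilde\gamma$ is a non-self-intersecting path from $O'$ to the node labeled by $r$ that avoids all images of the $\p_i$ (since $\gamma$ meets no $p_i$ in its interior), hence $\tilde\gamma$ represents the class $[\bm\gamma'_r]$. Projecting by $\pi$ and applying the uniqueness part of Lemma~\ref{distinct} yields $\gamma = \gamma_r$ in $\O$, and hence as arcs in $D$.

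The main obstacle is keeping the left-action conventions consistent in the inductive step: the side of $w(\P)$ projecting to $\ell_i$ is $w(l_i)$, and crossing it takes us to $ws_i(\P)$ rather than $s_i w(\P)$. The hypothesis that the word $s_{i_1}\cdots s_{i_l} s_k s_{i_l}\cdots s_{i_1}$ is already reduced is compatible with isotoping $\gamma$ so that it does not cross any $\ell_i$ twice in a row, ensuring that the lift genuinely visits distinct polygons at each step and no crossing is immediately undone.
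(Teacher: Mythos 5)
Your proof is correct and follows essentially the same route as the paper's: both lift $\gamma$ to $\H$ and observe that each crossing of $\ell_i$ corresponds to passing into the adjacent fundamental polygon along the Cayley-graph edge labelled $s_i$, so that the lift terminates at the node of $w_l s_k w_l^{-1}=r$ and the claim follows from the definition of $\gamma_r$ together with Lemma~\ref{distinct}. Your version merely spells out the induction and the left-action bookkeeping that the paper leaves implicit.
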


\begin{proof}
The rays $\ell_i$ connect the points $p_i$ to $\infty$, so their preimages on the orbifold $\O$ are precisely $\pi(l_i)$, where $l_i$ are the sides of the initial fundamental polygon $\P\subset\H$. Thus, if we lift $\gamma$ to $\H$, an intersection with $\ell_i$ corresponds to moving to a neighboring fundamental domain of $G$ along the edge of the Cayley graph labeled by $s_i$, which coincides with the definition of $\gamma_r$ for a given reflection $r\in G$.

\end{proof}

\subsection{Bad pairs}

\begin{definition}
Define the following {\it partial order} on the reflections in $G$: for the reflections $r=ws_iw^{-1}$ and $r'=w's_j(w')^{-1}$ (written in the reduced form) we say that $r<r'$ if $w'=ws_iu$ for some $u\in G$, $|s_iu|=|u|+1$ (where $|w|$ denotes the length of the reduced representative of $w$ in $G$). 
We extend this definition to a {\it partial order on the arcs in $D$} as follows:  given two arcs $\gamma=\gamma_r$ and $\gamma'=\gamma_{r'}$, we say that 
$\gamma<\gamma'$ if $r<r'$.

\end{definition}

\begin{definition}
\label{def-bad}
Let $(\gamma_{r_1},\dots,\gamma_{r_k})$ be a $k$-tuple of non-intersecting arcs indexed in a clockwise order (w.r.t. the endpoint $b$). We say that two consequtive arcs $(\gamma_{r_i}, \gamma_{r_{i+1}})$ form a {\it bad pair} if either $\gamma_{r_i}<\gamma_{r_{i+1}}$ or $\gamma_{r_{i+1}}<\gamma_{r_i}$.

\end{definition}

\begin{remark}
\label{bad}
Geometrically, $r<r'$ means that the node $r$ separates the node $r'$ from $e$ in the Cayley graph. For the corresponding arcs, $\gamma_{r}<\gamma_{r'}$ means that some initial segment of  $\gamma_{r'}$ coincides with $\gamma_{r}$, after which $\gamma_{r'}$ intersects the vertical ray $\ell_i$ from the endpoint $p_i$ of $\gamma_{r}$.

\end{remark}

\begin{lemma}
\label{seed->1jump} 
Let $S$ be a $Y$\!-\,seed. Then the $n$-tuple of corresponding arcs (in the clockwise order) contains at most one bad pair. 

\end{lemma}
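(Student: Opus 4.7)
The plan is to combine the geometric interpretation of the partial order from Remark~\ref{bad} with the classification of $Y$-seeds in Section~\ref{seeds}. By that remark, $r<r'$ iff the node of $r$ lies on the shortest $\G$-path from $e$ to the node of $r'$. Letting $\T$ denote the convex hull of the $Y$-seed's nodes and rooting it at its vertex $v$ closest to $e$, this says $r<r'$ iff the node of $r$ is a strict ancestor of the node of $r'$. Consequently a $Y$-seed node $N$ can be less than another $Y$-seed node only if $N$ has at least one $Y$-seed descendant in $\T$. By Propositions~\ref{acyclic} and~\ref{sep-node}, such a node is either the back-edge node when $S$ is acyclic at some star $w\neq e$ or the unique separating node $N_s$ when $S$ is non-acyclic; in the acyclic-at-$e$ case no $Y$-seed node has any $Y$-seed descendant, so every pair is incomparable and the bad-pair count is zero.

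For an acyclic $Y$-seed at a star $w\neq e$, the back-edge midpoint (on the edge from $w$ to its parent in $\G$) is closer to $e$ than every other node of the star; hence $v$ coincides with this back-edge node, which is then a strict ancestor of each of the remaining $n-1$ nodes (these being mutually incomparable). Because the dotted ray from $e$ terminates exactly on this back-edge node, Figure~\ref{ac-fig} shows that in the clockwise natural order the back-edge node occupies an extreme end position (position $1$ or $n$); it therefore has only one clockwise neighbour, giving at most one bad pair.

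The main case is the non-acyclic one, governed by Figures~\ref{base}, \ref{ind1}, \ref{ind2}. Here $\T$ has $n-1$ leaves together with the separating node $N_s$; leaves on the far side of $N_s$ from $v$ are strict descendants of $N_s$ and so strictly greater than it, while leaves on the same side as $v$ are incomparable with $N_s$. The key structural input, read off from Figures~\ref{ind1}--\ref{ind2}, is that in the clockwise natural order the descendants of $N_s$ form one contiguous block positioned on a single side of $N_s$, while the non-descendants form the complementary block on the other side. Therefore $N_s$ has at most one clockwise neighbour that is one of its descendants, yielding at most one bad pair.

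The main obstacle I expect is justifying this contiguity/one-sidedness claim in the non-acyclic case. The cleanest route is to argue by induction on the length of the mutation sequence reaching $S$, with the base step handled by the acyclic analysis above and the inductive step following by inspection of each mutation move catalogued in Figures~\ref{base}, \ref{ind1} and~\ref{ind2}; in each such diagram one checks directly that the layout around $N_s$ continues to place all of its descendants on a single side. Equivalently, one can give a planar argument: the two $\G$-neighbours of $N_s$ inside $\T$ sit in disjoint angular sectors viewed from $e$, so any clockwise traversal of $\T$ crosses $N_s$ exactly once and cleanly separates the far-side block of leaves from the near-side block.
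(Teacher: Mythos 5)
Your decomposition is genuinely different from the paper's: the paper shows that two $\cc$-vectors of the same sign never form a bad pair (because, by Figures~\ref{ac-fig}--\ref{ind2}, a node of one colour separates $e$ only from nodes of the other colour) and that positive roots precede negative ones in the clockwise order, so the only candidate bad pair sits at the unique sign change. Your route, via ancestry in the rooted convex hull, is workable but has a gap at its first step. A node is an ancestor of another exactly when it lies on the geodesic from $e$ to it, and this is \emph{not} the same as being separating in the sense of Proposition~\ref{sep-node}: a leaf of $\T$ which happens to be the point of $\T$ closest to $e$ is an ancestor of all the other $n-1$ nodes while separating no two of them from each other --- this is precisely what your back-edge node does in the acyclic case. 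Propositions~\ref{acyclic} and~\ref{sep-node} therefore do not exclude, for a non-acyclic seed, a second ancestor node besides $N_s$; ruling this out (i.e.\ showing that the dotted segment from $e$ always attaches at a vertex of $\G$ rather than at a node) requires exactly the figure-by-figure induction you defer to the end.

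Two further points. First, the clockwise order in the Lemma is the order of the arcs in $D$, which by Lemma~\ref{seed->arcs} is the \emph{cyclic} natural order cut at $\ell$, i.e.\ restarted at the first positive root; it is not the linear natural order, and the two have different sets of consecutive pairs. In Figures~\ref{ind1}--\ref{ind2} the descendants of $N_s$ are red nodes occupying positions $k+1,\dots,n$ and possibly wrapping around to $1,\dots,q$, so your contiguity claim holds only cyclically; the argument survives (the non-descendant component of $\T\setminus N_s$ contains at least one leaf, so at most one of the two cyclic neighbours of $N_s$ is a descendant, hence at most one in any linear cut), but as written your positional assertions are made in the wrong order. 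Second, the inductive verification you postpone is the same inspection the paper carries out to record the colour pattern of Figures~\ref{base}--\ref{ind2}; once that is done, the paper's shorter route (same-sign pairs are never bad, and the signs are sorted) is available, so your approach does not in the end avoid the work.
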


\begin{proof}
We have observed in Section~\ref{seeds} that positive $\cc$-vectors precede negative $\cc$-vectors in a $Y$\!-\,seed in the clockwise order.
We will prove that neither two positive nor two negative roots $\alpha$ and $\alpha'$ in a $Y$\!-\,seed give rise to a bad pair, which will show that there is at most one bad pair at the place where the sign changes. 

Let $\alpha$ and $\alpha'$ be two positive roots or two negative roots with corresponding reflections $r$ and $r'$. Suppose that $r$ and $r'$ give rise to a bad pair $(\gamma_{r},\gamma_{r'})$, we can assume that $r<r'$. This implies that the node $r$ separates the node $r'$ from $e$ in the Cayley graph. However, Figures~\ref{ac-fig}--\ref{ind2} show that this is not the case: a red node can separate $e$ from green ones only, and a green one can separate $e$ from red ones only. Thus, we come to a contradiction, which proves the lemma.

\end{proof}

\begin{lemma}
\label{1jump->seed}
Let   $(\gamma_{1},\dots,\gamma_{n})$ be a clockwise ordered non-intersecting $n$-tuple of arcs in $D$ containing at most one bad pair. Then it corresponds to a $Y$\!-\,seed. 

\end{lemma}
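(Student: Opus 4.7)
The plan is to prove the statement by induction on the complexity
$$
N = \sum_{i=1}^n |r_i|,
$$
where $|r_i|$ denotes the length of the reduced word in $G$ for the reflection $r_i$ associated to $\gamma_i$ by the construction of Section~\ref{r->arc}. Throughout, I will invoke the Speyer--Thomas criterion (Theorem~\ref{ST}) so that verifying the ``$Y$-seed'' property reduces to checking an inner-product condition and a product-equals-Coxeter-element condition.

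\emph{Base case.} When $N=n$, every $r_i$ is a simple reflection and each $\gamma_i$ is an elementary arc running from $b$ directly to a puncture. Non-intersecting clockwise-ordered arcs of this form must visit the punctures in the clockwise order in which they appear on the disc, so $r_i=s_i$ for every $i$. The tuple then coincides with the initial $Y$-seed, and the hypotheses of Theorem~\ref{ST} hold trivially (any sign assignment compatible with Figure~\ref{ac-fig} works).

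\emph{Inductive step.} For $N>n$, the plan is to exhibit a local simplifying move, decreasing $N$ while preserving the hypotheses. If the tuple has a bad pair $(\gamma_{r_k},\gamma_{r_{k+1}})$ with, say, $r_k<r_{k+1}$, then by Remark~\ref{bad} an initial segment of $\gamma_{r_{k+1}}$ coincides with $\gamma_{r_k}$; conjugation by $r_k$ shortens $r_{k+1}$ (and possibly a few other neighbors adjacent to $\gamma_{r_k}$) while leaving the rest of the tuple intact. Geometrically this is the operation of Figure~\ref{mut}, and it matches precisely the decreasing mutation at the separating node described in Sections~\ref{sec-nodes}--\ref{nas} and catalogued in Figures~\ref{ind1}--\ref{ind2}. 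If the tuple has no bad pair, I would first show (by ruling out the existence of any two comparable $r_i$, adjacent or not, via the clockwise linear ordering of the $\gamma_{r_i}$ and the tree structure of $\G$) that all nodes $r_i$ lie in a single fundamental $n$-star; then a source or sink conjugation on this star reduces $N$.

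\emph{Verification.} After the move, one must check that (i) the new tuple is again clockwise-ordered and non-intersecting, (ii) it has at most one bad pair, and (iii) its total length has strictly decreased. Property (i) follows from the conjugation picture already used in the proof of Lemma~\ref{seed->arcs} (conjugation by a loop sends non-intersecting collections to non-intersecting collections). Property (iii) is the defining feature of the mutation chosen, consistent with Corollary~\ref{decreasing}. Once these are in hand, the induction hypothesis gives that the reduced tuple is a $Y$-seed, and the involutivity of mutations immediately promotes the conclusion back to the original tuple.

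\emph{Main obstacle.} The principal difficulty is establishing (ii): that the chosen local move preserves the ``at most one bad pair'' property. This will require a careful case analysis of the relative positions on $\G$ of the involved reflections before and after conjugation, matched against the inventory of $Y$-seed shapes in Figures~\ref{ac-fig}--\ref{ind2} and the geometric description of $<$ in Remark~\ref{bad}. A secondary technical point is the no-bad-pair case, where one must justify the passage from ``no adjacent comparable pair'' to ``no comparable pair at all''; I expect this to follow from a short argument exploiting that arcs are indexed in clockwise order and that the convex hull of the nodes in $\G$ is a tree. With these two points resolved, the remainder of the proof is a straightforward assembly of results from Sections~\ref{s-refl}--\ref{sec-disc}.
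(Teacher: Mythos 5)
Your strategy (induction on $\sum_i|r_i|$ via a ``decreasing'' conjugation move) is genuinely different from the paper's, which verifies the two conditions of Theorem~\ref{ST} directly: the signs are chosen according to the position of the unique bad pair, condition (1) follows from Proposition~\ref{*}, and condition (2) --- that $r_1\cdots r_n$ is the Coxeter element --- is checked by moving the $n$-tuple to $(\gamma_{s_1},\dots,\gamma_{s_n})$ with the braid group $\mathbf B_n$ and observing that the Hurwitz-type action of its generators on the reflections preserves the product. Your sketch bypasses this but leaves the real work undone. First, your inductive move is not actually defined: before you know the tuple is a $Y$-seed there is no exchange matrix, so ``the decreasing mutation at the separating node'' has no meaning, and you must specify intrinsically which arcs get conjugated by $r_k$ (presumably all $r_j$ with $r_k<r_j$) and then prove that the result is again non-intersecting, clockwise ordered, of strictly smaller total length, and has at most one bad pair. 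You flag the last point as the ``main obstacle'' without resolving it, but in your architecture that point \emph{is} the lemma; as written the proof has a hole exactly where the content should be.

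Two further steps would fail as stated. In the no-bad-pair case you claim that ruling out comparable pairs ``via the clockwise ordering and the tree structure of $\G$'' forces all nodes into one fundamental $n$-star; this is false for abstract node collections --- for $n=3$ the nodes of $s_1s_2s_1$, $s_1s_3s_1$ and $s_3s_1s_3$ are pairwise non-comparable yet lie in no common star --- so the non-intersection of the arcs must be used in an essential way, and the statement you need here is essentially the acyclic case of the lemma itself, making the step circular as sketched. Finally, the ``involutivity of mutations immediately promotes the conclusion back'' only works if your geometric move, applied to the reduced tuple (now known to be a $Y$-seed), coincides with one of its $n$ genuine mutations, i.e.\ if the set of roots you conjugated matches the set prescribed by the signs of the entries $b^t_{jm}$; that identification is precisely the catalogue of Figures~\ref{ind1}--\ref{ind2} and has to be invoked and matched explicitly, not assumed. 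I would recommend either filling these three gaps carefully or switching to the paper's route, where the product condition is obtained in one stroke from the $\mathbf B_n$-action.
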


To prove the lemma, we need the following statement.

\begin{prop}
\label{*}
Let $\alpha, \alpha'\in \Delta$  be either two positive or two negative roots, denote the corresponding reflections by $r$ and $r'$ respectively. Assume that $\gamma_{r}$ and $\gamma_{r'}$ are arcs in $D$.
Then $(\gamma_{r},\gamma_{r'})$ is a bad pair if and only if $\langle \alpha,\alpha' \rangle >0$.

\end{prop}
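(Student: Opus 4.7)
The plan is to reduce the statement to a single inner product calculation in the universal Coxeter group~$G$. Since both arcs $\gamma_r,\gamma_{r'}$ and the inner product $\langle\alpha,\alpha'\rangle$ are unchanged by negating either root, I may assume that $\alpha,\alpha'$ are both positive. Write $\alpha=wv_i$ and $\alpha'=w'v_j$ in their unique reduced forms, so that $ws_i$ and $w's_j$ are reduced and $r=ws_iw^{-1}$, $r'=w's_j(w')^{-1}$. Observe that $\langle\alpha,\alpha'\rangle\ne 0$: two distinct reflections in the universal Coxeter group $G$ cannot commute, so their roots cannot be orthogonal.

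For the forward implication, assume $(\gamma_r,\gamma_{r'})$ is a bad pair; by symmetry take $r<r'$, so $w'=ws_iu$ with $s_iu$ and $us_j$ both reduced. Using $G$-invariance of the inner product and $s_iv_i=-v_i$ gives $\langle\alpha,\alpha'\rangle=\langle v_i,s_iuv_j\rangle=-\langle v_i,uv_j\rangle$. For the converse, assume $(\gamma_r,\gamma_{r'})$ is not a bad pair, and form the reduced word $u':=w^{-1}w'$ obtained after cancelling the longest common prefix of $w$ and $w'$. The hypothesis that neither $r<r'$ nor $r'<r$ holds, combined with the reducedness of $ws_i$ and $w's_j$, translates into: $u'$ does not begin with $s_i$ and $u'$ does not end with $s_j$. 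In particular, $u'v_j$ is a positive root (allowing the degenerate case $u'=e$, which forces $i\ne j$), and $u'v_j\ne v_i$ since $r\ne r'$. After cancelling the common prefix via $G$-invariance, $\langle\alpha,\alpha'\rangle=\langle v_i,u'v_j\rangle$. Both directions thus reduce to the same question: what is the sign of $\langle v_i,u''v_j\rangle$ when $u''v_j\ne v_i$ is a positive root whose reduced word $u''$ does not start with $s_i$?

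The key ingredient is the following lemma, proved by induction on~$|u|$: \emph{if $\beta=uv_j$ is a positive root in reduced form, $u$ does not start with $s_i$, and $\beta\ne v_i$, then $\langle v_i,\beta\rangle<0$.} The base case $u=e$ is immediate from $2$-completeness, since $\langle v_i,v_j\rangle=-|b_{ij}|\le -2$ (using $i\ne j$). For the inductive step, write $u=s_lu'$ with $l\ne i$, set $\beta'=u'v_j$, and expand $\langle v_i,\beta\rangle=\langle v_i,\beta'\rangle+|b_{il}|\langle v_l,\beta'\rangle$. The induction hypothesis applied to $(\beta',v_l)$ gives $\langle v_l,\beta'\rangle<0$ directly. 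If also $u'$ does not start with $s_i$, induction gives $\langle v_i,\beta'\rangle<0$ and both summands are negative, so we are done.

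The main obstacle is the nested subcase $u'=s_iu''$, where $\langle v_i,\beta'\rangle=-\langle v_i,u''v_j\rangle>0$ by induction, and one must argue that the negative term $|b_{il}|\langle v_l,\beta'\rangle$ still dominates. Here $2$-completeness ($|b_{il}|\ge 2$) is essential: expanding $\beta'=s_i\beta''$ and using the inductive bounds for $\beta''$ both with $v_i$ and with $v_l$, the total $\langle v_i,\beta\rangle$ acquires a factor $|b_{il}|^2-1\ge 3$ multiplying a negative quantity, which absorbs the leftover positive contribution. If the recursion continues (for instance with $u''=s_lu'''$), each further step produces another factor of $|b_{\cdot\cdot}|^2\ge 4$ on the dominating side; carefully tracking these magnitudes through the nested induction yields strict negativity in every case, completing the proof.
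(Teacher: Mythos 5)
Your strategy is genuinely different from the paper's: the paper disposes of this proposition in one sentence, translating $r<r'$ into the statement that the hyperplane $\alpha^\perp$ separates ${\alpha'}^\perp$ from the fundamental chamber in the cone $\C$, and reading off the sign of $\langle \alpha,\alpha'\rangle$ from that separation (the $2$-complete hypothesis makes all walls pairwise disjoint, so the sign of the inner product of two positive roots detects exactly whether one wall is nested behind the other). Your purely algebraic reduction of both implications to the single question ``what is the sign of $\langle v_i,u''v_j\rangle$'' is correct and carefully done, and the key lemma you isolate is true.

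However, the inductive step of that lemma has a genuine gap in the nested subcase $u'=s_iu''$. There you must show that $|b_{il}|\,|\langle v_l,\beta'\rangle|$ exceeds $\langle v_i,\beta'\rangle$, but your inductive hypothesis records only the \emph{signs} of the relevant inner products, not their relative magnitudes, so it cannot deliver this. Expanding one more level gives $\langle v_i,\beta\rangle=(|b_{il}|^2-1)\langle v_i,\beta''\rangle+|b_{il}|\langle v_l,\beta''\rangle$, and if $u''$ in turn begins with $s_l$ the second summand is again positive; the assertion that ``each further step produces another factor of $|b_{\cdot\cdot}|^2\ge 4$ on the dominating side'' is precisely what needs proof and is never established. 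To close the argument you need a quantitative strengthening of the induction, for instance: whenever the reduced word $u$ of a positive root $\beta=uv_j$ begins with $s_l$, one has $\langle v_i,\beta\rangle\le-\langle v_l,\beta\rangle$ for every $i\ne l$; with this, the nested case gives $\langle v_i,\beta\rangle\le(|b_{il}|-1)\langle v_l,\beta'\rangle<0$. (Equivalently, peel off the maximal alternating prefix of $u$ lying in the dihedral subgroup $\langle s_i,s_l\rangle$ and carry out the explicit rank-two computation.) Two smaller points: negating a \emph{single} root flips the sign of $\langle\alpha,\alpha'\rangle$, so your opening reduction should say ``negating both roots''; and in the subcase where you claim both summands are negative you should also treat the degenerate possibility $\beta'=v_i$, which forces $u'=e$, $j=i$ and gives $\langle v_i,\beta\rangle=2-|b_{il}|^2<0$ directly.
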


\begin{proof}
The statement is an immediate corollary of the definition of a bad pair. The assumption $r<r'$ is equivalent to the fact that the node $r$ separates the node $r'$ from $e$ in the Cayley graph, which is equivalent to the fact that the hyperplane $\alpha^\perp$ separates the hyperplane  ${\alpha'}^\perp$ from the initial fundamental chamber in the cone $\C$, and this implies the statement of the proposition.

\end{proof}

\begin{proof}[Proof of Lemma~\ref{1jump->seed}]
According to Lemma~\ref{arc->root}, every arc $\gamma_i$ corresponds to a reflection $r_i\in G$, and thus to a root in $\Delta$ (defined up to a sign). To prove the lemma, we verify the conditions (1) and (2) of Theorem~\ref{ST}. 

We can choose the signs of the roots as follows: if there is no bad pair, then we take all roots to be positive, and if $(\gamma_k,\gamma_{k+1})$ is a bad pair, then we take positive roots corresponding to reflections $r_{1},\dots, r_{k}$, and negative roots corresponding to reflections $r_{k+1},\dots, r_{n}$. According to Prop.~\ref{*}, the condition (1) of Theorem~\ref{ST} is then satisfied. Moreover, to verify condition (2) we are only left to show that  $r_1\dots r_n=s_1\dots s_n$.  

The group of automorphisms of  $D$ is the braid group $\mathbf B_n$ (see e.g. Theorem 1.10 in~\cite{Bir}). In particular, by the action of $\mathbf B_n$ we can take  the $n$-tuple $(\gamma_{1},\dots,\gamma_{n})$ to  $(\gamma_{s_1},\dots,\gamma_{s_n})$ (see~\cite[Section 4]{Bes} for an algorithm), for which the required equality holds: the standard generators of $\mathbf B_n$ act by exchanging endpoints of arcs (i.e., we take a closed curve containing the endpoints of $\gamma_i$ and $\gamma_{j}$ which intersects $\gamma_i$ and $\gamma_{j}$ once and does not intersect any other arc, and apply a ``half-twist'' in this curve, see~\cite{Bir} for more details). The standard computation below (see e.g.~\cite{Bes}) shows that the action of the generators of $\mathbf B_n$ does not change the product of the reflections, which implies the lemma.

Indeed, the transformation above (more precisely, one of the two mutually inverse ones) acts on the reflections $r_k$ in the following way (we assume $i<j$): it takes $r_i$ to $(r_jr_{j-1}\dots r_{i+1})r_i(r_{i+1}\dots r_{j-1}r_j)$, takes $r_j$ to $(r_{i+1}\dots r_{j-1})r_j(r_{j-1}\dots r_{i+1})$, and exchanges these two (i.e., it agrees with the {\em braid operations} defined in~\cite{CB}), see Examples~\ref{ex-conj} and~\ref{ex1-conj}. As a result, the subword $r_{i}\dots r_{j}$ in the product is now substituted by 
$$
(r_{i+1}\dots r_{j-1})r_j(r_{j-1}\dots r_{i+1})\,r_{i+1}\dots r_{j-1}\,(r_jr_{j-1}\dots r_{i+1})r_i(r_{i+1}\dots r_{j-1}r_j)=r_{i}\dots r_{j},
$$
while the other parts of the product remain intact. Thus, the product of all reflections also remains intact. 

\end{proof}

\begin{example}
\label{ex-conj}
Take a triple of arcs shown in Fig.~\ref{braid}(a) corresponding to reflections
$$r_1=s_1s_3s_1,\quad r_2=s_1,\quad r_3=s_3s_2s_3,$$ and consider the action of the standard generator $\sigma_2\in\mathbf B_3$ exchanging the last two marked points counter-clockwise (see Fig.~\ref{braid}(b)). This corresponds to a half-twist in the closed curve shown in Fig.~\ref{braid}(c), the result is shown in   Fig.~\ref{braid}(d). Now, the new triple of reflections is 
$$r_1'= s_1s_3s_2s_3s_1=r_2r_3r_2,\quad r_2'=s_1=r_2,\quad r_3'=s_3s_2s_3s_2s_3=r_3r_2r_1r_2r_3,   
$$ 
so $r_1'r_2'r_3'=(r_2r_3r_2)r_2(r_3r_2r_1r_2r_3)=r_1r_2r_3\,(=s_1s_2s_3)$.

\begin{figure}[!h]
\begin{center}
\psfrag{a}{\small (a)}
\psfrag{b}{\small (b)}
\psfrag{c}{\small (c)}
\psfrag{d}{\small (d)}
\psfrag{s2}{\small $\sigma_2$}
\psfrag{r1}{\scriptsize $r_1$}
\psfrag{r2}{\scriptsize $r_2$}
\psfrag{r3}{\scriptsize $r_3$}
\psfrag{r1'}{\scriptsize $r_1'$}
\psfrag{r2'}{\scriptsize $r_2'$}
\psfrag{r3'}{\scriptsize $r_3'$}
\epsfig{file=./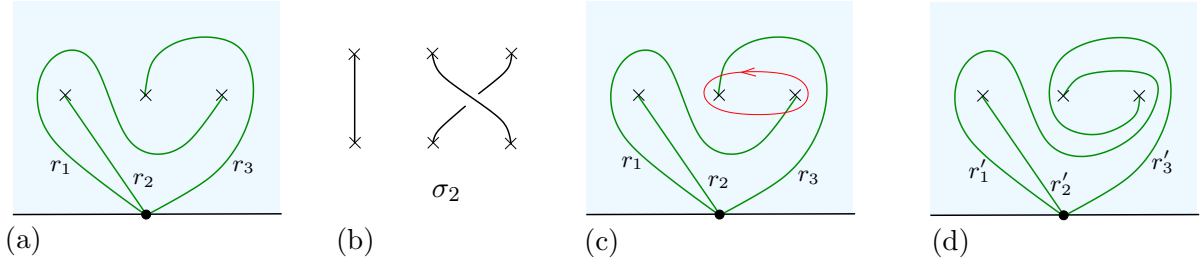,width=0.995\linewidth}
\caption{Action of a standard generator $\sigma_2\in\mathbf B_3$ on a triple of arcs}
\label{braid} 
\end{center}
\end{figure}
\end{example}

\begin{example}
\label{ex1-conj}
Consider the action of the braid $\sigma_1\sigma_2^{-1}\sigma_1^{-1}\in\mathbf B_3$ exchanging the first and the third marked points clockwise (see Fig.~\ref{braids}(b)) on the initial set of reflections $r_1=s_1,r_2=s_2,r_3=s_3$ (see Fig.~\ref{braids}(a)). This corresponds to a half-twist in the closed curve shown in Fig.~\ref{braids}(c), the result is shown in   Fig.~\ref{braids}(d). As in the previous example, we can compute the new triple of reflections: 
$$r_1'= s_1s_2s_3s_2s_1=r_1r_2r_3r_2r_1,\quad r_2'=s_2=r_2,\quad r_3'=s_2s_1s_2=r_2r_1r_2,   
$$ 
so $r_1'r_2'r_3'=r_1r_2r_3=s_1s_2s_3$.

\begin{figure}[!h]
\begin{center}
\psfrag{a}{\small (a)}
\psfrag{b}{\small (b)}
\psfrag{c}{\small (c)}
\psfrag{d}{\small (d)}
\psfrag{s}{\small $\sigma_1\sigma_2^{-1}\sigma_1^{-1}$}
\psfrag{r1}{\scriptsize $r_1$}
\psfrag{r2}{\scriptsize $r_2$}
\psfrag{r3}{\scriptsize $r_3$}
\psfrag{r1'}{\scriptsize $r_1'$}
\psfrag{r2'}{\scriptsize $r_2'$}
\psfrag{r3'}{\scriptsize $r_3'$}
\epsfig{file=./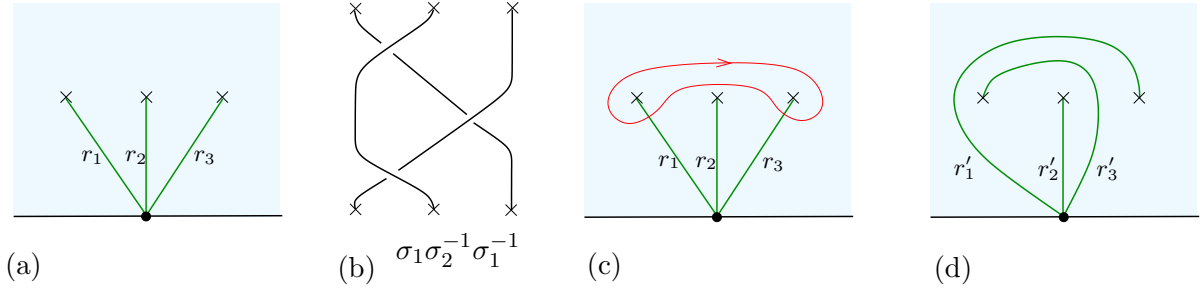,width=0.995\linewidth}
\caption{Action of a braid $\sigma_1\sigma_2^{-1}\sigma_1^{-1}\in\mathbf B_3$ on the initial triple of arcs}
\label{braids} 
\end{center}
\end{figure}

\end{example}

\begin{remark}
Note that an $n$-tuple of arcs may not define a $Y$\!-\,seed uniquely. For example, an $n$-tuple with no bad pair corresponds to $n+1$ distinct $Y$\!-\,seeds: these correspond to $k$ simple roots $v_1,\dots,v_k$ and $n-k$ negatives of simple roots $v_{k+1},\dots,v_n$, see Fig.~\ref{ac-fig}.    
 
\end{remark}

Lemmas~\ref{seed->1jump} and~\ref{1jump->seed} lead to the following theorem.

\begin{theorem}
\label{seed<->1jump}
A clockwise ordered $n$-tuple of non-intersecting arcs  in $D$ corresponds to a $Y$\!-\,seed if and only it contains at most one bad pair.

\end{theorem}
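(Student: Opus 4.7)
The plan is to prove the two directions separately, using the Speyer--Thomas criterion (Theorem~\ref{ST}) as the main bridge between the combinatorics of bad pairs (geometry in $D$) and the characterization of $Y$-seeds (algebra in $\Delta$). Both directions benefit from the fact that Proposition~\ref{*} converts the bad-pair condition into a statement about signs of inner products $\langle\alpha,\alpha'\rangle$.

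For the forward direction, I would start from a $Y$-seed $S^t$ and apply Theorem~\ref{disc} to obtain a clockwise-ordered non-intersecting $n$-tuple of arcs. Using the classification of $Y$-seeds on the Cayley graph collected in Figures~\ref{ac-fig}--\ref{ind2}, the positive $\cc$-vectors precede the negative ones in clockwise order. By Proposition~\ref{*}, a bad pair of same-sign roots would force a positive inner product, which by Theorem~\ref{ST}(1) cannot occur within a $Y$-seed. Hence the only possible bad pair is the consecutive pair at the green/red interface, and there is at most one. This is the content of Lemma~\ref{seed->1jump}.

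For the backward direction, given a clockwise-ordered $n$-tuple $(\gamma_1,\dots,\gamma_n)$ with at most one bad pair, Lemma~\ref{arc->root} produces reflections $r_1,\dots,r_n$. I would assign signs by declaring the roots before the bad pair (or all roots, if there is none) to be positive, and the rest negative; then Proposition~\ref{*} yields condition (1) of Theorem~\ref{ST} for each same-sign pair, since within such a block there are no bad pairs. The substantive task is to verify condition (2): that the product $r_1 \cdots r_n$, taken in the chosen order, equals the Coxeter element $s_1\cdots s_n$. My approach would be to use the transitivity of the braid group $\mathbf B_n$ on non-intersecting $n$-tuples of arcs in $D$ (Theorem 1.10 of~\cite{Bir}), which allows one to transport $(\gamma_1,\dots,\gamma_n)$ to the standard tuple $(\gamma_{s_1},\dots,\gamma_{s_n})$, where the product is manifestly $s_1\cdots s_n$. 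It then suffices to verify that each standard generator of $\mathbf B_n$ acts on the $n$-tuple of reflections by a ``braid operation'' of the form $(r_i,\dots,r_j)\mapsto((r_j\cdots r_{i+1})r_i(r_{i+1}\cdots r_j),\dots)$, which preserves the ordered product; a direct telescoping cancellation shows the subword $r_i r_{i+1}\cdots r_j$ is invariant.

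The main obstacle is precisely this invariance of the product under the braid action. The algebraic identity is a routine telescoping once one sets it up, but one must first correctly identify how the generator $\sigma_k$ transforms an arbitrary tuple of arcs (not just the standard one), and argue that this action indeed agrees with the braid operation on reflections. I would therefore treat this as the central lemma, verify it on the simplest examples (rank $3$, matching Examples~\ref{ex-conj}--\ref{ex1-conj}), and then reduce the general case to the local picture near the endpoints being swapped by using an isotopy that leaves all other arcs untouched.
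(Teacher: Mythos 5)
Your proposal is correct and follows essentially the same route as the paper, which splits the statement into Lemma~\ref{seed->1jump} and Lemma~\ref{1jump->seed} and proves the converse exactly as you describe: sign assignment at the unique bad pair, Proposition~\ref{*} for condition (1) of Theorem~\ref{ST}, and the braid group action with the telescoping invariance of the product for condition (2). The only cosmetic difference is in the forward direction, where the paper reads the impossibility of a same-sign bad pair directly off the separation patterns in Figures~\ref{ac-fig}--\ref{ind2} rather than routing through Proposition~\ref{*} and Theorem~\ref{ST}(1), but the two arguments are interchangeable.
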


\subsection{Lee -- Lee conjecture}
\label{LL}
The aim of this section is to show that every arc in $D$ belongs to some $n$-tuple with at most one bad pair. This will complete the proof of the fact that the assignment $r\to\gamma_r$ defines a bijection between (reflections corresponding to) real Schur roots and arcs in $D$. We then show that this is equivalent to Lee -- Lee conjecture for $2$-complete quivers.

\begin{definition}[$\gamma$-twin arcs]
\label{twins}
Given an arc $\gamma\subset D$, two arcs $\beta_1$ and $\beta_2$ are {\it $\gamma$-twin arcs} if they are obtained as follows:
\begin{itemize}
\item[-] take a loop $\hat \gamma$ around $\gamma$ and a point $q\in\hat \gamma$;
\item[-] denote by $\s_1$ and $\s_2$ the two distinct paths along $\hat \gamma$ from the basepoint $b$ to $q$;
\item[-] connect $q$ with a marked point $p$ distinct from the endpoint of $\gamma$ by any non-self-intersecting path $\rho$ not intersecting $\hat\gamma\setminus q$;
\item[-] define $\beta_1= \rho\circ \s_1$, $\beta_2= \rho\circ \s_2$, see Fig.~\ref{twin}.
\end{itemize}

\begin{remark}
\label{twins-def}
Alternatively, twin arcs can be defined in the following way. Let $\gamma$ and  $\beta_1$ be two non-intersecting arcs, denote the corresponding reflections by $r_\gamma$ and $r_1=w_1 s_jw_1^{-1}$. Then $\beta_2$ is the arc corresponding to the reflection $r_2=w_2 s_jw_2^{-1}$, where $w_2^{-1}w_1=r_\gamma$, or equivalently  $w_2=w_1r_\gamma$.  
\end{remark}

\end{definition}

Note that two $\gamma$-twin arcs have representatives in their isotopy classes disjoint from each other and from $\gamma$ and $\hat \gamma$.

\begin{figure}[!h]
\begin{center}
\psfrag{g}{ \small $\gamma$}
\psfrag{g'}{\color{red} \small  $\hat \gamma$}
\psfrag{b1}{\color{magenta}  \small $\beta_1$}
\psfrag{b2}{\color{blue}  \small $\beta_2$}
\psfrag{s}{\scriptsize \color{DarkOrchid} $\rho$}
\epsfig{file=./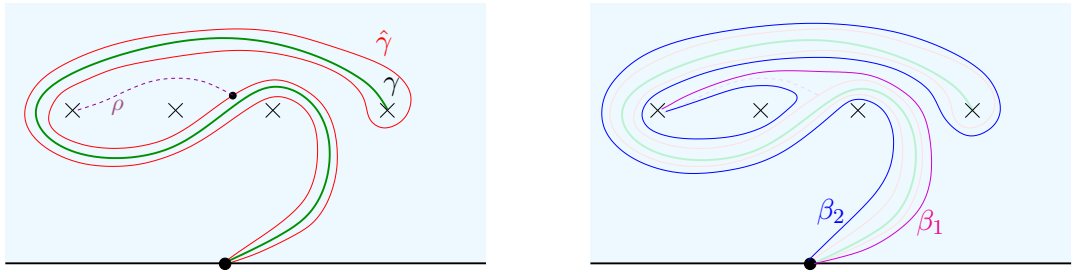,width=0.89\linewidth}
\caption{Construction of $\gamma$-twin paths $\beta_1$ and $\beta_2$.}
\label{twin}
\end{center}
\end{figure}

\begin{definition}
By the {\it length} of an arc $\gamma_r\subset D$ we mean the length of the reduced word for $r\in G$. We denote the length of $\gamma$ by $|\gamma|$.

\end{definition}

\begin{lemma}
\label{twin->nojump}
Let $\beta_1$ and $\beta_2$ be two $\gamma$-twin arcs with $|\gamma|<|\beta_i|$, $i=1,2$.
Then at least one of the pairs $(\beta_1,\gamma)$  and  $(\beta_2,\gamma)$ is not bad.

\end{lemma}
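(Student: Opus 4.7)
The plan is to reduce the claim to the algebraic identity $w_2=r_\gamma w_1$ obtained from the twin construction and then to derive a contradiction from uniqueness of reduced expressions in $G=(\ZZ/2\ZZ)^{*n}$.

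First I would reduce the notion of ``bad'' to a single inequality. Since $r<r'$ requires $w'$ to extend $w$ by $s_i$ and a non-trivial suffix, the relation forces $|r|<|r'|$. The hypothesis $|\gamma|<|\beta_i|$ therefore rules out $\beta_i<\gamma$, so $(\beta_i,\gamma)$ is bad if and only if $\gamma<\beta_i$, and it suffices to show that $\gamma<\beta_1$ and $\gamma<\beta_2$ cannot hold simultaneously.

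Next I would extract the twin relation directly from the geometric construction. Let $v_i$ be the $\ell$-crossings word of $\sigma_i$ and $u_\rho$ that of $\rho$, so $w_i=v_iu_\rho$. Viewing $\hat\gamma$ as the loop $\sigma_1\cdot\sigma_2^{-1}$, its crossings word in $G$ is $v_1v_2^{-1}$; by the description in Section~\ref{r->arc} applied to this loop around $p_\gamma$, this element equals $r_\gamma$. Cancelling $u_\rho$ yields $w_1w_2^{-1}=r_\gamma$, i.e.\ $w_2=r_\gamma w_1$.

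To finish, write $r_\gamma=ws_iw^{-1}$ in reduced form. If both $\gamma<\beta_1$ and $\gamma<\beta_2$ hold, then $w_k=ws_iu_k$ is reduced and $u_k$ does not begin with $s_i$ for $k=1,2$. Substituting into $w_2=r_\gamma w_1$,
\[ ws_iu_2=(ws_iw^{-1})(ws_iu_1)=wu_1, \]
so $s_iu_2=u_1$ in $G$. But $s_iu_2$ is reduced and begins with $s_i$, while $u_1$ is reduced and begins with a different generator; this contradicts uniqueness of reduced expressions in the free product $G$. Hence at most one of $(\beta_1,\gamma)$, $(\beta_2,\gamma)$ is bad. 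The main obstacle is pinning down the correct side on which $r_\gamma$ acts in the twin identity: with $r_\gamma$ on the left the two cancellations $w^{-1}w=e$ and $s_is_i=e$ collapse the product instantly, while the reverse order admits no such cancellation and fails to close the argument.
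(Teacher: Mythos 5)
Your proof is correct in substance but takes a genuinely different route from the paper's. The paper argues topologically: assuming both pairs are bad, both $\beta_1$ and $\beta_2$ must first run alongside $\gamma$ and then cross the ray $\ell_j$ from the endpoint of $\gamma$; the inner one, say $\beta_2$, is then trapped in the disc bounded by $\gamma$, $\ell_j$ and $\beta_1$, and since it can cross neither of those it must exit through $\ell_j$ and can be isotoped out of the disc entirely, contradicting $\beta_2>\gamma$. You instead work entirely in the free product: you extract the twin relation $w_2=r_\gamma w_1$ (equivalently $r_2=r_\gamma r_1 r_\gamma$) from the crossing words and show that it is incompatible with both $w_1$ and $w_2$ having $ws_i$ as a reduced prefix. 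Your reduction of ``bad'' to $\gamma<\beta_k$ via the length hypothesis is exactly the paper's first step and is sound. What your approach buys is transparency of mechanism --- the twin relation forces exactly one of the two twins to pass through the node of $r_\gamma$ in the tree --- and it is insensitive to the planar picture. What the paper's approach buys is brevity and independence from the precise algebraic form of the twin relation. Notably, the left-handed relation $w_2=r_\gamma w_1$ you derive is, under the paper's stated conventions (crossing words read from $b$ outward, $\beta_k=\rho\circ\s_k$ meaning $\s_k$ first), the correct one; the paper's own Remark~\ref{twins-def} states $w_2=w_1r_\gamma$, which does not agree with your computation and appears to have the conjugating factor on the wrong side, whereas the conjugation form $r_2=r_\gamma r_1 r_\gamma$ is the convention-robust statement.

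One technical point needs tightening. Your $w_k=v_ku_\rho$ is the crossing word of the particular representative $\rho\circ\s_k$, while the $w_k$ in the definition of $<$ is the one in the \emph{reduced} form $r_k=w_ks_j w_k^{-1}$. These group elements can differ by a trailing $s_j$ (an isotopy swinging the final segment of $\beta_k$ around its endpoint $p_j$ adds or removes one crossing of $\ell_j$), so the honest conclusion of the cancellation is $w_2=r_\gamma w_1 s_j^{\delta}$ with $\delta\in\{0,1\}$. For $\delta=1$ your substitution gives $s_iu_2=u_1s_j$ instead of $s_iu_2=u_1$, and one must still check that this is impossible: since the endpoint of the twins is distinct from that of $\gamma$ we have $i\neq j$, so either $u_1s_j$ reduces to the identity or to $s_j$ (forcing $u_2$ to begin with $s_i$) or it begins with the first letter of $u_1$, which is not $s_i$; in every case the contradiction with uniqueness of reduced words in $(\Z/2\Z)^{*n}$ survives. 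So the gap is patchable in two lines, but as written the clean identity $w_2=r_\gamma w_1$ is asserted for the wrong pair of group elements.
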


\begin{proof}

Suppose that both  $(\beta_1,\gamma)$  and  $(\beta_2,\gamma)$ are bad pairs. As  $|\gamma|<|\beta_i|$, $i=1,2$, this implies that  $\beta_1>\gamma$ and $\beta_2>\gamma$. Let $p_j$ be the endpoint of $\gamma$ distinct from $b$, recall that $\ell_j$ is the vertical ray from $p_j$ (see Fig.~\ref{good twin}). Since  
 $\beta_1>\gamma$ and $\beta_2>\gamma$, both $\beta_i$ first follow $\gamma$ (on different sides) and then cross the vertical ray $\ell_j$ (see Remark~\ref{bad}), we will assume that $\beta_2$ crosses $\ell_j$ closer to $p_j$. As $\beta_2$ has its endpoint at some marked point $p_k$ distinct from $p_j$, the arc $\beta_2$ should leave the disc bounded by $\gamma$, $\ell_j$ and $\beta_1$ (shaded disc  in Fig.~\ref{good twin}).
However, $\beta_2$ cannot cross neither $\gamma$ nor $\beta_2$, which implies that it will leave the disc through $\ell_j$. Hence, there exists a representative of $\beta_2$ in its isotopy class which does not enter the shaded disc at all, which contradicts the assumption that $\beta_2>\gamma$.
The contradiction proves the statement.

\end{proof}

\begin{figure}[!h]
\begin{center}
\psfrag{g}{\color{OliveGreen} \small $\gamma$}
\psfrag{b1}{\color{blue}  \small $\beta_1$}
\psfrag{b2}{\color{magenta}  \small $\beta_2$}
\psfrag{t}{\scriptsize  $\ell_j$}
\psfrag{p}{  $p_j$}
\epsfig{file=./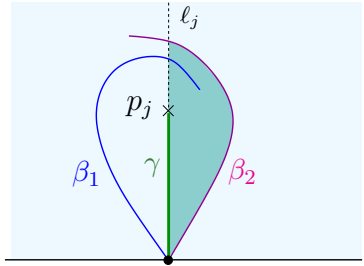,width=0.3\linewidth}
\caption{If $\beta_1>\gamma$ and $\beta_2>\gamma$ then  one of $(\beta_1,\gamma)$ and $(\beta_2,\gamma)$ is not bad.  }
\label{good twin}
\end{center}
\end{figure}

\begin{remark}
\label{length_of_twin}
By construction, if $\beta_1$ and $\beta_2$ are $\gamma$-twin arcs then $\big| |\beta_1|-|\beta_2| \big |\le |\gamma|$.

\end{remark}









The following proposition is an immediate corollary of Remark~\ref{twins-def}.
\begin{prop}
\label{twin_exists}
Given two arcs $\gamma,\delta\subset D$ with distinct endpoints, there exists an arc $\beta$ such that $\beta$ and $\delta$ are $\gamma$-twin. 

\end{prop}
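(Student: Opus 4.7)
The plan is to produce $\beta$ using Remark~\ref{twins-def} to define it algebraically, and then match it to the geometric construction of Definition~\ref{twins} to verify it is a genuine arc. Let $r_\gamma$ and $r_\delta = w s_j w^{-1}$ be the reflections associated (via Lemma~\ref{arc->root}) to $\gamma$ and $\delta$, respectively, written in reduced form; by assumption the endpoint $p_j$ of $\delta$ is distinct from the endpoint of $\gamma$. Define
\[
r_\beta := (w r_\gamma)\, s_j\, (w r_\gamma)^{-1} \in G,
\]
and let $\beta := \gamma_{r_\beta}$ be the curve in $D$ associated to this reflection by the construction in Section~\ref{sec-disc}; it is well-defined up to isotopy by Lemma~\ref{distinct} and terminates at $p_j$. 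By construction, the pair $(\beta,\delta)$ satisfies the algebraic relation $w_\beta = w_\delta \cdot r_\gamma$ appearing in Remark~\ref{twins-def}.

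The substantive remaining task is to confirm that $\beta$ is an \emph{arc}, that is, non-self-intersecting, and that the pair $(\beta,\delta)$ actually arises from the geometric construction of Definition~\ref{twins}. For this I would choose a simple loop $\hat\gamma$ based at $b$ bounding a thin tubular neighbourhood of $\gamma$ (so that $\hat\gamma$ encloses only the endpoint of $\gamma$ as a marked point, and represents the reflection $r_\gamma$ in $\pi_1(D\setminus P)$), and isotope $\delta$ within its isotopy class to minimise $|\delta\cap\hat\gamma|$. Let $q$ be the last point of $\delta\cap\hat\gamma$ along $\delta$ (going from $b$ to $p_j$), and let $\rho$ be the terminal sub-arc of $\delta$ from $q$ to $p_j$. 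By the choice of $q$, the path $\rho$ meets $\hat\gamma$ only at $q$, and it is non-self-intersecting as a sub-arc of $\delta$. The two arcs $\theta_1,\theta_2$ of $\hat\gamma$ from $b$ to $q$ then yield, together with $\rho$, the twin pair $(\rho\circ\theta_1,\,\rho\circ\theta_2)$ prescribed by Definition~\ref{twins}; to conclude, one only needs that one of these is isotopic to $\delta$ (so that the other is $\beta$, which is thereby non-self-intersecting).

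The main obstacle will be verifying that one of $\rho\circ\theta_i$ is isotopic (rel endpoints, in $D\setminus P$) to $\delta$, equivalently that the initial sub-arc $\delta_1$ of $\delta$ from $b$ to $q$ is isotopic to one of $\theta_1,\theta_2$. This is forced by the combination of two facts: first, the minimality of $|\delta\cap\hat\gamma|$ prevents $\delta_1$ from winding around punctures in a way that could be isotoped past $\hat\gamma$ (which would produce fewer intersections); and second, the disc region bounded by $\hat\gamma$ inside $D$ contains exactly one marked point (namely the endpoint of $\gamma$, which is not $p_j$), so the homotopy classes of non-self-intersecting paths from $b$ to $q$ that cross $\hat\gamma$ only at $q$ are exhausted by $\theta_1$ and $\theta_2$. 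Once this identification is made, Definition~\ref{twins} directly exhibits $\beta$ as a $\gamma$-twin of $\delta$, completing the proof.
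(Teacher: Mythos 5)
Your construction follows the same route as the paper: Proposition~\ref{twin_exists} is proved there in a single line, as an immediate corollary of Remark~\ref{twins-def}, i.e.\ exactly by setting $w_\beta=w_\delta\, r_\gamma$ as you do. The additional geometric verification you supply (which the paper omits entirely) is welcome and sound in spirit, but two points need attention. First, in the case that actually occurs in the application --- $\delta$ and $\gamma$ non-intersecting --- putting $\delta$ in minimal position makes $\delta\cap\hat\gamma$ empty away from $b$, so ``the last point of $\delta\cap\hat\gamma$'' is $b$ itself; then $\theta_1$ degenerates to a constant path and $\rho\circ\theta_2$ passes through $b$ twice, so it is not literally an arc. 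The fix is to take $q$ to be a point of $\hat\gamma$ adjacent to $b$ on the side from which $\delta$ departs, with $\rho$ the remainder of $\delta$; then $\theta_1$ is a short boundary segment and $\theta_2$ the long way around, and everything is embedded. Second, your claim that $\delta_1$ must be isotopic to one of $\theta_1,\theta_2$ tacitly uses that $\delta$ can be isotoped off $\gamma$: minimality of $|\delta\cap\hat\gamma|$ does \emph{not} by itself rule out an excursion of $\delta_1$ into the $\hat\gamma$-disc that winds around the endpoint of $\gamma$ (such an excursion is not removable), and for such a $\delta$ no $\gamma$-twin pair contains it at all. This non-intersection hypothesis appears in Remark~\ref{twins-def} and holds where the proposition is applied (Proposition~\ref{n-1_without_jump}), but is missing from the statement of the proposition itself --- a gap you inherit from the paper rather than introduce, though it would be worth making explicit.
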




\begin{prop}
\label{n-1_without_jump}
Any arc in $D$ can be included into a clockwise ordered $(n-1)$-tuple of non-intersecting arcs $(\gamma_1,\dots,\gamma_{n-1})$ without bad pairs.

\end{prop}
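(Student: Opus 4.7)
The plan is to realise $\gamma$ inside an $n$-tuple of non-intersecting arcs coming from an acyclic $Y$\!-\,seed, chosen so that the (at most one) arc involved in a bad pair is not $\gamma$, and then to delete that arc to produce the required $(n-1)$-tuple. Write $\gamma=\gamma_r$ with $r = w s_k w^{-1}$ in reduced form, where $w = s_{i_1}\cdots s_{i_l}$ (taking $w=e$ when $l=0$); reducedness of $r$ forces $k\ne i_l$ whenever $l\ge 1$.

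First I would construct, by induction on $|w|$, an acyclic $Y$\!-\,seed whose $n$ $\cc$-vectors correspond, as reflections, to the $n$ nodes of the fundamental $n$-star at the vertex $w$ of the Cayley graph $\G$. For $w=e$ the initial $Y$\!-\,seed does the job. Given a seed already built for $w' = s_{i_1}\cdots s_{i_{l-1}}$, one uses sink mutations (which preserve acyclicity, and which for a $2$-complete acyclic quiver cycle through the $n$ linear orders obtained from the current natural order by rotation, so that any prescribed vertex can be placed as the source) to make the vertex carrying the reflection $w' s_{i_l}(w')^{-1}$ a source; a subsequent source mutation at that vertex conjugates every other reflection by $w' s_{i_l}(w')^{-1}$, turning the reflection set into $\{w s_j w^{-1}\}_{j=1}^n$.

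By Theorem~\ref{disc} this acyclic $Y$\!-\,seed produces a clockwise ordered $n$-tuple $(\delta_1,\dots,\delta_n)$ of non-intersecting arcs in $D$, one of which is $\gamma$; by Theorem~\ref{seed<->1jump} the tuple has at most one bad pair. The decisive combinatorial observation is that among the reflections $\{w s_j w^{-1}\}$ the ``incoming'' node $w s_{i_l} w^{-1}$ (the midpoint of the edge from $s_{i_1}\cdots s_{i_{l-1}}$ to $w$ in $\G$) is strictly less in the partial order of Section~\ref{converse} than every ``outgoing'' node $w s_j w^{-1}$ with $j\ne i_l$, whereas distinct outgoing nodes lie on different branches of $\G$ issuing from $w$ and are pairwise incomparable. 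Hence any bad pair in the tuple must involve the incoming arc $\delta^\ast := \gamma_{w s_{i_l} w^{-1}}$; as $k\ne i_l$, we have $\delta^\ast\ne\gamma$. Removing $\delta^\ast$ leaves a clockwise ordered $(n-1)$-tuple of non-intersecting arcs containing $\gamma$: the bad pair is destroyed, while the two outgoing arcs that become newly adjacent are incomparable, hence do not form a bad pair. The base case $l=0$ is handled by deleting any $\gamma_{s_j}$ with $j\ne k$ from $(\gamma_{s_1},\dots,\gamma_{s_n})$, which has no bad pair at all.

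The main technical obstacle I anticipate is the existence step in the second paragraph: Proposition~\ref{acyclic} supplies only the implication that the reflections from an acyclic $Y$\!-\,seed form a fundamental star, and the converse requires verifying that sink mutations give enough flexibility in the quiver orientation to place any prescribed vertex as a source at each inductive stage. The delicate point here is that vertex labels are fixed under mutation while the quiver orientation changes, so one must track carefully which label carries which reflection as the induction proceeds.
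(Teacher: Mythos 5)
The decisive step of your argument --- that for every $w\in G$ the fundamental $n$-star at $w$ is realized as the set of reflections of an acyclic $Y$\!-\,seed --- is not merely the ``main technical obstacle'' you flag; it is false for $n\ge 3$. Take $n=3$ and $w=s_2$ (this case genuinely occurs, e.g.\ for the arc $\gamma_{s_2s_1s_2}$, which crosses $\ell_2$ once and ends at $p_1$). The reflections of the star at $s_2$ are $s_2,\ s_2s_1s_2,\ s_2s_3s_2$, and a direct check of all six orderings shows that no product of these three reflections equals the Coxeter element $s_1s_2s_3$ in the free product; by condition (2) of Theorem~\ref{ST} this set is not the reflection set of \emph{any} $Y$\!-\,seed, acyclic or not. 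The mechanism you propose breaks down because whether a sink or source mutation conjugates the remaining reflections depends not only on the quiver but on the \emph{sign} of the root at the mutated vertex: a source mutation conjugates only when that root is positive, a sink mutation only when it is negative. Tracing signs through the rotating sink mutations, the star at $e$ contains exactly $n+1$ acyclic $Y$\!-\,seeds, and only the all-positive and all-negative ones admit a conjugating acyclic mutation --- by $s_1$ and by $s_n$ respectively. Hence the acyclic seeds form a single line in the exchange graph whose stars trace the one path $\dots,s_1s_2,\,s_1,\,e,\,s_n,\,s_ns_{n-1},\dots$ in $\G$, missing most vertices of the tree when $n\ge 3$. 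Since your $n$-tuple $(\delta_1,\dots,\delta_n)$ is produced by applying Theorem~\ref{disc} to this non-existent seed, you have no justification that the paths $\gamma_{ws_jw^{-1}}$ are non-self-intersecting, pairwise disjoint and clockwise ordered, which is precisely the content the proposition needs.

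Your order-theoretic endgame is correct as far as it goes: the incoming node $ws_{i_l}w^{-1}$ is below every outgoing node, distinct outgoing nodes lie on different branches and are incomparable, so \emph{if} the $n-1$ outgoing paths were known to form a tuple of pairwise non-intersecting arcs, deleting the incoming one would leave no bad pairs. But establishing that they are arcs at all is the whole difficulty, and it cannot be outsourced to the $Y$\!-\,seed machinery here. The paper argues quite differently: by induction on the number of punctures, deleting a puncture to obtain $n-2$ pairwise non-intersecting arcs containing $\gamma$ with no bad pairs, then adjoining a further arc $\beta_0$ made very long by Dehn twists and repairing any bad pair it creates by repeatedly passing to $\gamma_i$-twin arcs (Proposition~\ref{twin_exists} and Lemma~\ref{twin->nojump}). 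If you want to salvage the flavour of your approach, the honest route is the geometric one indicated in Remark~\ref{is}: cut $D$ along the given arc $\gamma$ and complete inside the resulting $(n-1)$-punctured disc, rather than trying to realize the star at $w$ by a seed.
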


\begin{proof}
The proof is by induction on the number of arcs $n$. The base case is $n=2$; it holds trivially as an arc $\gamma$ is always contained in the set $\{\gamma\}$ of $1$ arc without bad pairs. We will assume that the statement holds for a disc with $n-1$ marked points and show it for the disc with $n$ marked points.

Suppose that $\gamma$ has $p_k$ as an endpoint. Let $p_i$, $i\ne k$ be any other  marked point. Remove the marked point $p_i$ from $D$ and use inductive assumption to choose a set of $n-2$ arcs $\gamma_1,\dots,\gamma_{n-2}$ including $\gamma$ and having  no bad pairs (in the disc with $n-1$ marked points). Now, we put the marked point $p_i$ back to $D$ and observe that no pair of arcs can become bad from this: indeed, removing a marked point is equivalent to taking a quotient by one of the generators, and if two words are not contained in each other in the quotient then they are definitely not contained in each other in the bigger group.

Our next aim is to add an  $(n-1)$-st arc $\beta$ to $\gamma_1,\dots,\gamma_{n-2}$ such that no bad pair arise.
Let $p_i$ and $p_j$ be the marked points distinct from the endpoints of  $\gamma_1,\dots,\gamma_{n-2}$.
Consider a closed non-self-intersecting curve $\s$ separating  $p_i$ and $p_j$ from all the other marked points and not intersecting any of $\gamma_1,\dots, \gamma_{n-2}$. 

First, we take any arc $\beta_0$ preceding $\gamma_1$ in the clockwise order and connecting $b$ to $p_i$ (it does exist as the arcs $\gamma_1,\dots, \gamma_{n-2}$ do not separate any domain in $D$). 
Applying the Dehn twist $T_\s$ (or its inverse) along $\s$ to the arc $\beta$ several times if needed, we can assume that 
\begin{equation}
\label{eqn} 
|\beta_0|>n|\gamma_i|  \ \  \text{for} \ \  i=1,\dots,n-2.
\end{equation}
If  $(\beta_0,\gamma_1)$ is not a bad pair then we are done.
Assume that $(\beta_0,\gamma)$ is a bad pair. Let $\beta_1$ be a $\gamma_1$-twin of  $\beta_0$, it exists by Proposition~\ref{twin_exists}.
By Lemma~\ref{twin->nojump} and inequality~(\ref{eqn}) the pair $(\beta_1,\gamma_1)$ is not bad. If $(\beta_1,\gamma_2)$ is not a bad pair we  are done again, otherwise consider $\beta_2$ defined as a $\gamma_2$-twin of $\beta_1$, etc (we can continue using Lemma~\ref{twin->nojump} due to inequality~(\ref{eqn}) together with Remark~\ref{length_of_twin}). Considering, if needed, $\beta_i$ (defined as $\gamma_i$-twin of $\beta_{i-1}$) for all $i$, we either find $\beta_i$ which does not form a bad pair neither with $\gamma_i$ nor with $\gamma_{i+1}$ for $i\le n-1$, or observe that $(\gamma_{n-2},\beta_{n-2}) $ cannot be a bad pair by construction (and there is no right neighbor of $\beta_{n-2}$).    


\end{proof}

\begin{cor}
\label{arc->inside_seed}
Let $\gamma$ be an arc in $D$. Then there exists an $n$-tuple of non-intersecting arcs containing $\gamma$ with at most one bad pair.

\end{cor}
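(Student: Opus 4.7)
The plan is to invoke Proposition~\ref{n-1_without_jump} to embed $\gamma$ into an $(n-1)$-tuple of non-intersecting arcs with no bad pairs, and then to adjoin one further arc using the twin arc technology (Definition~\ref{twins}, Lemma~\ref{twin->nojump}) so that the total number of bad pairs is at most one.

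First I would invoke Proposition~\ref{n-1_without_jump} to obtain a clockwise ordered $(n-1)$-tuple $(\gamma_1,\dots,\gamma_{n-1})$ of non-intersecting arcs containing $\gamma$ and without bad pairs. Denote by $p$ the unique marked point that is not an endpoint of any $\gamma_j$. The arcs divide $D$ into regions; the point $p$ lies in exactly one of them, which meets $b$ along exactly one of the $n$ clockwise sectors $R_0,\dots,R_{n-1}$ cut out at $b$ by the $\gamma_j$'s (with $R_0$ preceding $\gamma_1$ and $R_{n-1}$ following $\gamma_{n-1}$). Any arc $\delta$ from $b$ to $p$ disjoint from the $\gamma_j$'s must emanate from $b$ in that sector $R_i$, and therefore occupies position $i$ in the clockwise order. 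In the boundary cases $i=0$ or $i=n-1$, $\delta$ has only one clockwise neighbor among the $\gamma_j$'s, so at most one new pair is created and the corollary follows immediately.

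Next, for the interior case $1\le i\le n-2$, I would choose some $\delta_0$ in $R_i$ ending at $p$ and, applying Dehn twists along a curve in the region separating $\{b,p\}$ from the rest (exactly as in the proof of Proposition~\ref{n-1_without_jump}), arrange $|\delta_0|>2\max_j|\gamma_j|$. If at most one of the pairs $(\gamma_i,\delta_0)$ and $(\delta_0,\gamma_{i+1})$ is bad, we are done. Otherwise I would form the $\gamma_i$-twin $\delta'$ of $\delta_0$ via Proposition~\ref{twin_exists}; Remark~\ref{length_of_twin} then also guarantees $|\gamma_i|<|\delta'|$, so Lemma~\ref{twin->nojump} forces at least one of $(\delta_0,\gamma_i)$ and $(\delta',\gamma_i)$ to be non-bad. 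Since $(\delta_0,\gamma_i)$ is bad by assumption, $(\delta',\gamma_i)$ must be non-bad. By Definition~\ref{twins}, $\delta'$ leaves $b$ on the side of $\gamma_i$ opposite to $\delta_0$ and therefore occupies the sector $R_{i-1}$; moreover, since $\delta'$ reuses the same subpath $\rho$ as $\delta_0$ outside an arbitrarily thin collar around $\gamma_i$, it is non-intersecting with every $\gamma_j$. The resulting clockwise $n$-tuple
\[(\gamma_1,\dots,\gamma_{i-1},\delta',\gamma_i,\gamma_{i+1},\dots,\gamma_{n-1})\]
inherits all its adjacencies from the original tuple except that $(\gamma_{i-1},\gamma_i)$ is replaced by $(\gamma_{i-1},\delta')$ and $(\delta',\gamma_i)$; the latter is non-bad and all other consecutive pairs were non-bad to begin with, so at most one bad pair remains, namely $(\gamma_{i-1},\delta')$.

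The hard part will be the geometric bookkeeping in the twin step: confirming that $\delta'$ actually emanates from $b$ into the sector $R_{i-1}$, remains non-intersecting with every other $\gamma_j$, and has $|\delta'|$ large enough to validate Lemma~\ref{twin->nojump}. Each of these follows from a direct reading of Definition~\ref{twins}, Remark~\ref{length_of_twin}, and Remark~\ref{bad}, together with the freedom to choose the loop $\hat\gamma_i$ in the twin construction as thin a collar around $\gamma_i$ as we wish.
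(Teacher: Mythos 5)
Your reduction to Proposition~\ref{n-1_without_jump} is the right first move, but the rest of the argument rests on a false topological claim, and the machinery you build to cope with it does not work. The $(n-1)$ arcs $\gamma_1,\dots,\gamma_{n-1}$ form a tree meeting $\partial D$ only at $b$, so their complement in $D$ is a \emph{single} region whose closure contains all $n$ sectors at $b$; the uncovered marked point $p$ can therefore be joined to $b$ by an arc emanating in \emph{any} sector, in particular in $R_0$. The position of the new arc is not forced, so your ``interior case'' never needs to arise: one simply places the new arc first in the clockwise order, where it has a single clockwise neighbour and hence can create at most one bad pair. This one-line observation is exactly the paper's proof (and your own ``boundary case'' $i=0$).

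The interior case you then set up is not only unnecessary but also broken at the Dehn-twist step. Since only one marked point $p$ is left uncovered, the complement of the $\gamma_j$'s is a disc containing $p$ as its only interior marked point, so for each sector there is exactly \emph{one} isotopy class of arc from $b$ to $p$ disjoint from all the $\gamma_j$; its length is whatever it is, and there is no Dehn twist that lengthens $\delta_0$ while fixing every $\gamma_j$. The twist used in the proof of Proposition~\ref{n-1_without_jump} exists only because \emph{two} marked points are free at that stage, so the twisting curve can encircle both, stay disjoint from the other arcs, and still meet $\beta_0$ essentially; a curve around the single point $p$ acts trivially on arcs ending at $p$. Without the inequalities $|\gamma_i|<|\delta_0|$ and $|\gamma_i|<|\delta'|$ you cannot invoke Lemma~\ref{twin->nojump}, and the twin step collapses. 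The repair is simply to delete the forced-sector claim and run your $i=0$ case.
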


\begin{proof}
By Proposition~\ref{n-1_without_jump}, $\gamma$ can be included into a clockwise ordered $(n-1)$-tuple $(\gamma_1,\dots,\gamma_{n-1})$ of non-intersecting arcs in $D$ without bad pairs. 
Let $p_i$ be the marked point which is not an endpoint of  any of $\gamma_1,\dots,\gamma_{n-1}$. Connect $b$ to $p_i$ by the arc $\gamma_0$ preceding $\gamma_1$ in the clockwise order, so that the $n$-tuple $(\gamma_0,\dots,\gamma_{n-1})$ is clockwise ordered. Since adding $\gamma_0$ cannot introduce more than one bad pair, we get the statement. 

\end{proof}

Combining Corollary~\ref{arc->inside_seed} with Theorems~\ref{disc} and~\ref{seed<->1jump}, we obtain the following theorem.

\begin{theorem}[Lee -- Lee conjecture for $2$-complete quivers]
\label{thm-LL}
For a $2$-complete acyclic quiver, the assignment $r\to\gamma_r$ defines a bijection between (reflections corresponding to) real Schur roots and arcs in $D$.

\end{theorem}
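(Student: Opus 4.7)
The plan is to assemble the statement from the three main ingredients already in hand: Theorem~\ref{disc}, Theorem~\ref{seed<->1jump}, and Corollary~\ref{arc->inside_seed}, together with the characterization of real Schur roots as positive $\cc$-vectors due to Nagao~\cite{N} and N{\'a}jera Ch{\'a}vez~\cite{NCh}. At this point in the paper the assignment $r\mapsto\gamma_r$ has been defined on all reflections of $G$ (via the construction in Section~\ref{sec-disc}), so what remains is to verify that, when restricted to reflections corresponding to real Schur roots, this map is both injective and surjective onto the set of arcs in $D$.

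First I would dispose of injectivity and well-definedness. By Lemma~\ref{distinct}, paths $\gamma_r$ on the orbifold $\O$ are determined up to homotopy by $r$, and distinct reflections yield non-homotopic paths; cutting $\O$ along $\ell$ preserves this, so distinct reflections give distinct isotopy classes of arcs in $D$. Combined with Theorem~\ref{disc}(1), this gives a well-defined injection from real Schur roots into the set of arcs of $D$.

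The surjectivity step is where the work of Sections~\ref{converse} is consumed. Given any arc $\gamma\subset D$, Corollary~\ref{arc->inside_seed} embeds $\gamma$ into a clockwise-ordered $n$-tuple of non-intersecting arcs $(\gamma_0,\dots,\gamma_{n-1})$ containing at most one bad pair. By Theorem~\ref{seed<->1jump} such a tuple is realized by a $Y$-seed $S^t$, so $\gamma=\gamma_{r_i}$ for some reflection $r_i$ occurring in $S^t$; equivalently, the reflection $r_i$ corresponds (up to sign) to a $\cc$-vector $\cc_i^t$ of $S^t$. By the results of~\cite{N,NCh}, the set of $\cc$-vectors of all $Y$-seeds is precisely the set of real Schur roots together with their negatives, so the underlying positive root of $\cc_i^t$ is a real Schur root, and its associated reflection is exactly $r_i$. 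Hence $\gamma$ lies in the image of the map $r\mapsto\gamma_r$ restricted to real Schur roots, which finishes surjectivity.

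There is no genuine obstacle left at this stage, since the two serious facts, namely the bad-pair characterization of $Y$-seeds (Theorem~\ref{seed<->1jump}) and the existence of a completion to such an $n$-tuple (Corollary~\ref{arc->inside_seed}), have already been established; the remaining task is purely a matter of combining them. The only point that deserves a brief comment in the final write-up is why the bijection obtained matches the formulation of Lee -- Lee's conjecture: this amounts to observing that an arc $\gamma_r$ in $D$ exists (is non-self-intersecting) if and only if the loop $\hat\gamma_r$ on $\O$ lifts to a class of loops in $D$ containing a non-self-intersecting representative, which is exactly the condition from the conjecture as reformulated in Section~\ref{r->arc}. I would close by pointing back to that reformulation to make the equivalence with~\cite{LL} explicit.
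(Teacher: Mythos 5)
Your proposal is correct and follows essentially the same route as the paper, which proves Theorem~\ref{thm-LL} precisely by combining Theorem~\ref{disc} (well-definedness and injectivity on real Schur roots, via Lemma~\ref{distinct} and the results of~\cite{N,NCh}) with Theorem~\ref{seed<->1jump} and Corollary~\ref{arc->inside_seed} (surjectivity, by completing any arc to an $n$-tuple with at most one bad pair and hence realizing it inside a $Y$\!-\,seed). Your closing remark on matching the formulation of~\cite{LL} likewise mirrors the discussion the paper gives immediately after the theorem.
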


We are left to explain why Theorem~\ref{thm-LL} is equivalent to Lee -- Lee conjecture~\cite[Conjecture 2.4]{LL}  for $2$-complete quivers. We refer to~\cite[Section 2.1]{LL} for notation.  

Consider an index two subgroup $\Gamma_0\triangleleft\Gamma$ containing all the words of even length. Then the quotient $\H/\Gamma_0$ is precisely the manifold $\Sigma_\sigma$ from~\cite[Section 2.1]{LL}, with segments $L_1$ and $L_2$ projecting to $\ell$ under the two-fold covering of $\O$. 

Now perform the following isotopy of any $\sigma$-admissible curve $\eta$ on $\Sigma_\sigma$ (see~\cite[Definition 2.1]{LL}) defined in a small neighborhood of the union of segments $L_1$ and $L_2$. Informally speaking, we move the endpoints of  $\eta$ to the centers of the two $n$-gons, ``pushing'' all the parts of  $\eta$ intersecting  $L_1$ and $L_2$  along  $L_1$ and $L_2$ respectively, see Fig.~\ref{isotopy}. This isotopy takes $\eta$ to a non-self-intersecting curve $\eta'$ connecting the centers of the two $n$-gons and disjoint from $L_1$ and $L_2$ (and defining the same reflection in $G$). It is easy to see that if $\eta$ corresponds to a reflection $r\in G$, then the projection of $\eta'$ under the two-fold covering of $\O$ is precisely $\hat\gamma'_r$, which establishes a bijection between $\sigma$-admissible curves in~\cite{LL} and arcs in $D$.

\begin{figure}[!h]
\begin{center}
\psfrag{a}{\small (a)}
\psfrag{b}{\small (b)}
\epsfig{file=./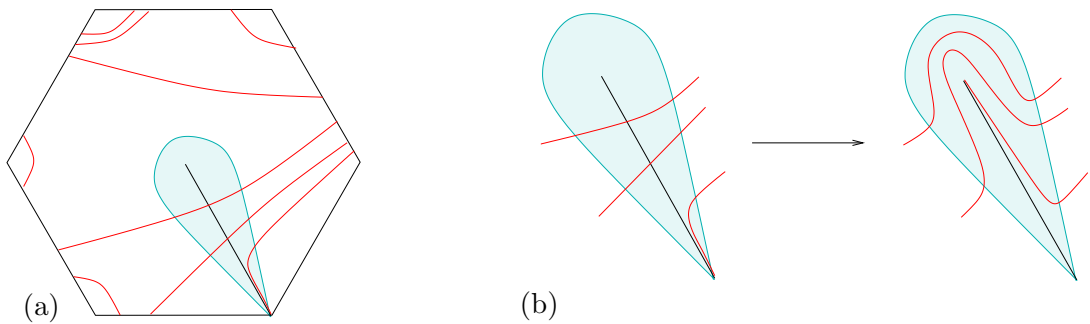,width=0.9\linewidth}
\caption{An isotopy of a $\sigma$-admissible curve~\cite{LL} projecting to $\hat\gamma_r$ in $D$.}
\label{isotopy}
\end{center}
\end{figure}

\begin{remark}
\label{is}

We would also like to note that there is another way to prove Theorem~\ref{thm-LL} without investigating the $Y$-seeds. Combining Theorem~\ref{disc} and Lemma~\ref{arc->root}, we see that we are left to show that every non-self-intersecting arc corresponds to a Schur root. In~\cite[Corollary 4.2]{IS}, Igusa and Schiffler proved that a real root is a Schur root if and only if the corresponding reflection is a prefix of the Coxeter element. Applying this result to our settings, we see that we only need to prove that every non-self-intersecting arc $\gamma$ can be included in an $n$-tuple of non-self-intersecting arcs, such that $\gamma$ is the first one in the clockwise order. This is of course always true since cutting $D$ along $\gamma$ gives rise to an $(n-1)$-punctured disc.    

This proof is shorter, but it does not say much about the structure of $Y$-seeds. We hope that the facts about bad pairs and  $Y$-seeds obtained while proving Theorem~\ref{seed<->1jump} and Corollary~\ref{arc->inside_seed} will be useful for answering further questions, in particular for understanding the compatibility of arcs (see Question~\ref{q}). 

\end{remark}

\subsection{Further questions}
\label{open}

Finally, we list several questions answers to which might be of interest. 

\begin{question}
Can the proof of Theorem~\ref{thm-LL} be extended to acyclic quivers that are not $2$-complete?
\end{question}

Our proof of Theorem~\ref{thm-LL} is based on the existence of an isomorphism between two groups, namely the Weyl group $G$ of the root system $\Delta$, and the group $\Gamma$ generated by order two rotations around some points on sides of an ideal hyperbolic polygon. If the initial quiver is not $2$-complete, the group $G$ is not a universal Coxeter group anymore.


\begin{question}
\label{q}
Let us call a collection of arcs {\em compatible} if they correspond to $\cc$-vectors belonging to one $Y$\!-\,seed. 
\begin{enumerate}
\item 
Which pairs of arcs are compatible?
\item 
Is a collection of mutually compatible arcs compatible itself?
\end{enumerate}
\end{question}

We note that the notion of compatibility will change if one considers $\dd$-vectors instead of $\cc$-vectors (as in~\cite{LL}): the collections of real Schur roots defining $n$-tuples of $\dd$-vectors of a seed and $\cc$-vectors of a $Y$\!-\,seed are essentially different.


\begin{thebibliography}{DWZ}
\bibitem[BGZ]{BGZ}  M.~Barot, C.~Geiss, A.~Zelevinsky, {\em Cluster algebras of finite type and positive symmetrizable matrices},  J. London Math. Soc. (2) 73 (2006), 545--564.
\bibitem[Bes]{Bes} D.~Bessis, {\em A dual braid monoid for the free group}, J. Algebra 302 (2006), 55--69.
\bibitem[Bir]{Bir} J.~Birman, {\em Braids, Links, and Mapping Class Groups}, 
Annals of Mathematics Studies No. 82, Princeton University Press (1975).

\bibitem[Bon]{B} K.~Bongartz, {\em Algebras and quadratics forms}, J. Lond. Math. Soc. 28 (1983), 461--469.

\bibitem[CK]{CK} P.~Caldero, B.~Keller, {\em From triangulated categories to cluster algebras. II}, Ann. Sci. \'Ecole Norm. Sup. (4) 39 (2006), 983--1009. 

\bibitem[CZ]{CZ} P.~Caldero, A.~Zelevinsky, {\em Laurent expansions in cluster algebras via quiver representations}, Mosc. Math. J. 6 (2006), 411--429.

\bibitem[C]{CB} W.~Crawley-Boevey, {\em Exceptional sequences of representations of quivers}, in ``Representations of algebras'', Proc. Ottawa 1992, eds V. Dlab and H. Lenzing, Canadian Math. Soc. Conf. Proc. 14 (Amer. Math. Soc., 1993), 117--124. 

\bibitem[DWZ]{DWZ} H.~Derksen, J.~Weyman, A.~Zelevinsky, {\em Quivers with potentials and their representations {\rm II}: Applications to cluster algebras}, J. Amer. Math. Soc. 23 (2010), 749--790.

\bibitem[FT]{rank3} A.~Felikson, P.~Tumarkin, {\em Geometry of mutation classes of rank $3$ quivers}, arXiv:1609.08828.

\bibitem[FZ]{FZ1} S.~Fomin, A.~Zelevinsky, {\em Cluster algebras {\rm I}: Foundations}, J. Amer. Math. Soc. 15 (2002), 497--529.

\bibitem[HK]{HK} A.~Hubery, H.~Krause, {\em A categorification of non-crossing partitions}, J. Eur. Math. Soc. 18 (2016), 2273--2313.

\bibitem[IS]{IS} K.~Igusa, R.~Schiffler, {\em Exceptional sequences and clusters}, J. Algebra 323 (2010), 2183--2202. 

\bibitem[Kac]{Kac} V.~Kac, {\em Infinite root systems, representations of graphs and invariant theory}, Invent. Math. 56 (1980), 57--92.

\bibitem[Kel]{K} B.~Keller, {\em On cluster theory and quantum dilogarithm identities}, Representations of algebras and related topics, 85--116, EMS Ser. Congr. Rep., Eur. Math. Soc., Z\"urich, 2011.

\bibitem[LL]{LL} K.-H.~Lee, K.~Lee, {\em A conjectural description for real Schur roots of acyclic quivers}, arXiv:1703.09113. 

\bibitem[M]{M} B.~Maskit, {\em On Poincar\'e's theorem for fundamental polygons}, Adv. Math. 7 (1971), 219--230.

\bibitem[N]{N} K.~Nagao, {\em Donaldson -- Thomas theory and cluster algebras}, Duke Math. J. 162 (2013), 1313--1367.

\bibitem[NCh]{NCh} A.~ N{\'a}jera Ch{\'a}vez, {\em On the $\cc$-vectors of an acyclic cluster algebra}, Int. Math. Res. Notices 2015 (2015), 1590--1600. 

\bibitem[Sch]{Sch} A.~Schofield, {\em General representations of quivers}, Proc. Lond. Math. Soc. 65 (1992), 46--64.

\bibitem[Se]{S2} A.~Seven, {\em Cluster algebras and symmetric matrices}, Proc. Amer. Math. Soc. 143 (2015), 469--478. 

\bibitem[ST]{ST} D.~Speyer, H.~Thomas, {\em Acyclic cluster algebras revisited}, Algebras, quivers and representations, Abel Symp., vol. 8, Springer, Heidelberg, 2013, pp. 275--298.

\bibitem[V]{V} E.~B.~Vinberg, {\em Discrete linear groups generated by reflections}, Math. USSR Izv. 5 (1971), 1083--1119. 


\bibitem[W]{W} M.~Warkentin, {\em Exchange graphs via quiver mutation}, Ph.D. thesis, 2014. Available at \\\url{http://www.qucosa.de/urnnbn/urn:nbn:de:bsz:ch1-qucosa-153172}

\end{thebibliography}
\end{document}